\theoremstyle{plain}
\newtheorem{theorem}{Theorem}[section]
\newtheorem{lemma}[theorem]{Lemma}
\newtheorem{proposition}[theorem]{Proposition}
\theoremstyle{definition}
\newtheorem{definition}[theorem]{Definition}
\newtheorem{examplewr}[theorem]{Examples}
\theoremstyle{remark}
\newtheorem{obswr}[theorem]{Observation}
\newtheorem{remarkwr}[theorem]{Remark}
\newcommand{\cF}{\mathcal F}
\newcommand{\cC}{\mathcal C}
\newcommand{\V}{\mathcal V}
\newcommand{\E}{\mathcal E}
\newcommand{\g}{\gamma}
\newcommand{\G}{\Gamma}
\newcommand{\Gap}{\Gamma_0^D(p M)}
\newcommand{\Ga}{\Gamma_0^D(M)}
\newcommand{\hGa}{\hat{\Gamma }_0(M)}
\newcommand{\Q}{\mathbb{Q}}
\newcommand{\Z}{\mathbb{Z}}
\newcommand{\C}{\mathbb{C}}
\newcommand{\PP}{\mathbb{P}}
\newcommand{\Gal}{\operatorname{Gal\,}}
\newcommand{\GL}{\operatorname{GL}}
\newcommand{\Div}{\operatorname{Div}}
\newcommand{\End}{\operatorname{End}}
\newcommand{\ord}{{\operatorname{ord}}}
\newfont{\gotip}{eufb10 at 12pt}
\newcommand{\cM}{{\mathcal M}}
\newcommand{\cL}{{\mathcal L}}
\newcommand{\cD}{{\mathcal D}}
\newcommand{\om}{{\omega}}
\newcommand{\lra}{\longrightarrow}
\newcommand{\SL}{{\mathrm {SL}}}
\newcommand{\R}{{\mathbb R}}
\newcommand{\M}{{\mathrm{M}}}
\newcommand{\HC}{\mathcal F_{\rm har}}
\newcommand{\PGL}{{\mathrm{PGL}}}
\newcommand{\Supp}{\mathrm{Supp}}
\def \mint {\times \hskip -1.1em \int}
\newcommand{\T}{\mathbb T}
\DeclareMathOperator{\Hom}{Hom}
\DeclareMathOperator{\Meas}{Meas}
\newcommand{\X}{\mathbb X}
\newcommand{\Y}{\mathbb Y}
\newcommand{\U}{\mathbb U}
\newcommand{\res}{\mathrm{res}}
\newcommand{\cl }{\mathcal}
\newcommand{\longmono}{\mbox{$\lhook\joinrel\longrightarrow$}}
\newcommand{\longepi}{\mbox{$\relbar\joinrel\twoheadrightarrow$}}
\newcommand{\smallmat}[4]{\bigl(\begin{smallmatrix}#1&#2\\#3&#4\end{smallmatrix}\bigr)}
\newcommand{\invlim}{\mathop{\varprojlim}\limits}
\begin{document}

\title[Uniformizations of Jacobians of Shimura curves]{On rigid analytic uniformizations of Jacobians of Shimura curves}

\author{Matteo Longo, Victor Rotger and Stefano Vigni}

\thanks{The research of the second author is financially supported by DGICYT Grant MTM2009-13060-C02-01.}

\begin{abstract}
The main goal of this article is to give an explicit rigid analytic uniformization of the maximal toric quotient of the Jacobian of a Shimura curve over $\Q$ at a prime dividing exactly the level. This result can be viewed as complementary to the classical theorem of \v{C}erednik and Drinfeld which provides rigid analytic uniformizations at primes dividing the discriminant. As a corollary, we offer a proof of a conjecture formulated by M. Greenberg in his paper on Stark--Heegner points and quaternionic Shimura curves, thus making Greenberg's construction of local points on elliptic curves over $\Q$ unconditional.
\end{abstract}

\address{M. L.: Dipartimento di Matematica, Università di Milano, Via C. Saldini 50, 20133 Milano, Italy}
\curraddr{Dipartimento di Matematica Pura e Applicata, Università di Padova, Via Trieste 63, 35121 Padova, Italy}
\email{mlongo@math.unipd.it}
\address{V. R.: Departament de Matemàtica Aplicada II, Universitat Politècnica de Catalunya, C. Jordi Girona 1-3, 08034 Barcelona, Spain}
\email{victor.rotger@upc.edu}
\address{S. V.: Dipartimento di Matematica, Università di Milano, Via C. Saldini 50, 20133 Milano, Italy}
\curraddr{Departament de Matemàtica Aplicada II, Universitat Politècnica de Catalunya, C. Jordi Girona 1-3, 08034 Barcelona, Spain}
\email{stefano.vigni@upc.edu}

\subjclass[2000]{14G35, 14G22}
\keywords{Shimura curves, rigid analytic uniformization, $p$-adic integration}

\maketitle

\section{Introduction}

In an attempt to investigate analogues in the real setting of the theory of complex multiplication, Darmon introduced in his fundamental paper \cite{Dar} the notion of Stark--Heegner points on elliptic curves over $\Q$. These points are expected to be defined over abelian extensions of real quadratic fields $K$ (see \cite{BD2} for partial results in this  direction) and to satisfy analogous properties to those enjoyed by classical Heegner points rational over abelian extensions of imaginary quadratic fields.

Darmon's Stark--Heegner points were later lifted from elliptic curves to certain modular Jacobians by Dasgupta in \cite{Das}. More precisely, let $M$ be a positive integer and let $p$ be a prime number not dividing $M$. By working with modular symbols for the congruence subgroup $\Gamma_0(pM)$, Dasgupta defines a certain torus $T$ over $\Q_p$ and a lattice $L\subset T$, and he proves that the quotient $T/L$ is isogenous to the maximal toric quotient $J_0(p M)^{\text{$p$-new}}$ of the Jacobian of the modular curve $X_0(pM)$. This statement, which can be phrased as an equality of $\mathcal L$-invariants, turns out to be a strong form of the conjecture of Mazur--Tate--Teitelbaum (\cite{MTT}), now a theorem of Greenberg and Stevens (\cite{gs}). The very construction of $T/L$ allows Dasgupta to introduce Stark--Heegner points on it, and these points map to Darmon's ones under modular parametrizations. As a by-product, an efficient method for calculating the $p$-adic periods of $J_0(p M)^{\text{$p$-new}}$ is also obtained (in contrast with the less explicit approach of de Shalit in \cite{dS}).

It is important to observe that both Darmon's and Dasgupta's strategies, making extensive use of the theory of modular symbols, depend crucially on the presence of cusps on classical modular curves, and this prevents their arguments from extending in a straightforward way to the situation where modular curves are replaced by more general Shimura curves. In more explicit terms, the above methods apply only under the following \emph{Stark--Heegner hypothesis}:
\begin{equation} \label{SHhyp}
\text{$p$ is inert in $K$ and all the primes dividing $M$ split in $K$.}
\end{equation}
When $E$ is an elliptic curve over $\Q$ of conductor $N=pM$, condition \eqref{SHhyp} implies that the sign $\sigma(E/K)$ of the functional equation of the $L$-function attached to $E$ over $K$ is $-1$; the existence of Darmon's Stark--Heegner points is thus predicted by the conjecture of Birch and Swinnerton-Dyer for $E_{/K}$.

The starting point of our investigation is the recent article \cite{Gr} of M. Greenberg, in which the author proposes a program to generalize Darmon's constructions to totally real number fields and situations in which $\sigma(E/K)=-1$ but condition \eqref{SHhyp} is not satisfied. To give an idea of Greenberg's approach in the special case where the base field is $\Q$, assume that $K$ is a real quadratic field such that the conductor $N$ of the elliptic curve $E_{/\Q}$ admits a factorization $N=pDM$ into relatively prime integers, where $D$ is the square-free product of an \emph{even} number of primes, the prime divisors of $pD$ are inert in $K$ and the prime divisors of $M$ split in $K$. Then $\sigma(E/K)=-1$ and Greenberg describes a $p$-adic construction of Stark--Heegner points on $E$ which are conjectured to be rational over ring class fields of $K$ and to satisfy a suitable Shimura reciprocity law, as in the original work of Darmon. The key idea in \cite{Gr} is to reinterpret Darmon's theory of modular symbols in terms of the cohomology of Shimura curves attached to the quaternion algebra of discriminant $D$ and, ultimately, of group cohomology. Greenberg's construction of Stark--Heegner points on $E$ depends on the validity of an unproved statement (\cite[Conjecture 2]{Gr}) which is the counterpart of Dasgupta's version (\cite[Theorem 3.3]{Das}) of the theorem by Greenberg and Stevens; as a corollary to the main theorems in this paper, we give a proof of \cite[Conjecture 2]{Gr} over $\Q$, thus making Greenberg's results unconditional. This conjecture has also been proved, independently and by different methods, by Dasgupta and Greenberg in \cite{DG}.

More generally, the chief goal of our article is to give an explicit rigid analytic uniformization of the maximal toric quotient of the Jacobian of a Shimura curve associated with a non-split quaternion algebra at a prime dividing exactly the level, in the spirit of \cite{Das}. This result can be viewed as complementary to the classical theorem of \v{C}erednik and Drinfeld (for a detailed exposition of which we refer to \cite{BC}) that provides rigid analytic uniformizations at primes dividing the discriminant of the quaternion algebra. As will be made clear in the rest of the paper, our strategy is inspired by ideas in \cite{Das} and \cite{Gr}, to which we are indebted, and introduces several new ingredients for attacking the uniformization result; most remarkably, the explicit construction of a cocycle with values in a space of measures on $\PP^1(\Q_p)$ and an analysis of the delicate properties of a lift of it to a suitable bundle over $\PP^1(\Q_p)$, which are crucial for the proof of our main theorem. Beyond its theoretical interest, the construction of this cocycle is significant for a second reason: it is amenable to computations and -- with notation to be explained below -- paves the way to the calculation of the period matrix of $J^D_0(pM)^{\text{$p$-new}}$, as Dasgupta does in \cite[Section 6]{Das} for modular Jacobians.

Finally, we would like to highlight one more feature of our work. Although we devote no effort to this issue here, our results make it possible to define suitable lifts of Greenberg's Stark--Heegner points to Jacobians of Shimura curves, much in the same vein as the constructions in \cite{Das} lift Darmon's points to modular Jacobians. In fact, one of the long-run motivations of this article is to extend to broader contexts the results on the arithmetic of Stark--Heegner points, special values of $L$-functions and modular abelian varieties that are described by Bertolini, Darmon and Dasgupta in \cite{BDD}. Details in this direction will appear in future projects (see, e.g., \cite{LRV}); we hope that the results in the present paper may represent a first step towards a general and systematic study of special values of $L$-functions and congruences between modular forms over real quadratic fields as envisioned, for instance, in \cite[Section 6]{BD-2001} and \cite{BDD}.

Now let us describe the results of this paper more in detail; this will also give us the occasion to introduce some basic objects that will be used throughout our work. Let $D>1$ be a square-free product of an \emph{even} number of primes and let $M\geq1$ be an integer coprime with $D$. Let $B$ be the (unique, up to isomorphism) indefinite quaternion algebra over $\Q$ of discriminant $D$ and choose an isomorphism of algebras
\[ \iota_\infty:B\otimes_\Q\R\overset{\simeq}{\longrightarrow}\M_2(\R). \]
Let $R(M)$ be a fixed Eichler order of level $M$ in $B$ and write $\Gamma_0^D(M)$ for the group of norm $1$ elements in $R(M)$. Fix a prime $p\nmid MD$ and an Eichler order $R(pM)\subset R(M)$ of level $pM$ in $B$, and define as above $\Gamma_0^D(pM)$ to be the group of norm $1$ elements in $R(pM)$. Consider the compact Riemann surfaces
\begin{equation} \label{shimura-curves-eq}
X_0^D(M):=\Gamma_0^D(M)\backslash\cl H,\qquad X_0^D(pM):=\Gamma_0^D(pM)\backslash\cl H
\end{equation}
where $\cl H$ is the complex upper half-plane and the subgroup of elements in $B^\times$ with positive norm acts on $\cl H$ by M\"obius transformations via the embedding $B\hookrightarrow B\otimes_\Q\R$ and the isomorphism $\iota_\infty$. The curves in \eqref{shimura-curves-eq} are the \emph{Shimura curves} attached to $B$ of level $M$ and $pM$, respectively.

Let
\[ \pi_1,\pi_2: X^D_0(pM) \lra  X^D_0(M),\qquad \Gamma_0^D(pM)z \overset{\pi_1}{\longmapsto}\Gamma_0^D(M)z,\qquad \Gamma_0^D(pM)z \overset{\pi_2}{\longmapsto}\Gamma_0^D(M)\om_pz \]
be the two natural degeneracy maps; here $\om_p$ is an element in $R(pM)$ of reduced norm $p$ that normalizes $\Gamma_0^D(p M)$. Denote by $H$ the maximal torsion-free quotient of the cokernel of the map
\[ \pi^\ast:=\pi_1^\ast\oplus\pi_2^\ast:H_1\bigl(X_0^D(M),\Z\bigr)^2\longrightarrow H_1\bigl(X_0^D(pM),\Z\bigr) \]
induced by pull-back on homology. Let $J_0^D(pM)$ be the Jacobian variety of $X_0^D(pM)$ and let $J^D_0(pM)^{\text{$p$-new}}$ be its $p$-new quotient, whose dimension will be denoted by $g$; the abelian group $H$ is free of rank $2g$. Now consider the torus
\[ T:=\mathbb G_m\otimes_\Z H \]
where $\mathbb G_m$ denotes the multiplicative group (viewed as a functor on commutative $\Q$-algebras). Following the strategy of Dasgupta in \cite{Das}, we define a (full) lattice $L$ in $T$ and study the quotient $T/L$. In order to do this, fix an isomorphism of algebras
\begin{equation} \label{iota-p-eq}
\iota_p:B\otimes_\Q\Q_p\overset{\simeq}{\longrightarrow}\M_2(\Q_p)
\end{equation}
such that $\iota_p\big(R(M)\otimes\Z_p\big)$ is equal to $\M_2(\Z_p)$ and $\iota_p\big(R(pM)\otimes\Z_p\big)$ is equal to the subgroup of  $\M_2(\Z_p)$ consisting of upper triangular matrices modulo $p$. As done in \cite{Dar} when $D=1$, we introduce the group
\[ \Gamma:=\bigl(R(M)\otimes\Z[1/p]\bigr)^\times_1\;\overset{\iota_p}{\longmono}\;\GL_2(\Q_p), \]
which acts on Drinfeld's $p$-adic half-plane $\mathcal H_p:=\C_p-\Q_p$ with dense orbits. We will regard $H$ and $T$ as $\Gamma$-modules with trivial action.

In Sections \ref{section-Hecke} and \ref{L} we review some well-known facts on Hecke algebras, Shimura curves and $\mathcal L$-invariants. In Sections \ref{mvc}, \ref{raav} and \ref{periods} we introduce an explicit element in the cohomology group $H^1\bigl(\G,\Meas\bigl(\PP^1(\Q_p),H\bigr)\bigr)$ which defines by cup product an integration map on the homology group $H_1(\G,\Div^0\mathcal H_p)$ with values in $T(\C_p)$. We then consider the boundary homomorphism $H_2(\G,\Z)\rightarrow H_1(\G,\Div^0\mathcal H_p)$ induced by the degree map; the composition of these two maps produces a further map $H_2(\G,\Z)\rightarrow T(\C_p)$ whose image we denote by $L$. As we will see, it turns out that $L$ is a lattice of rank $2g$ in $T(\Q_p)$ which is preserved by the action of a suitable Hecke algebra. Finally, let $K_p$ denote the unramified quadratic extension of $\Q_p$.

The following is a precise formulation of the main result of this article, which is proved in Section \ref{uniformization-section}.

\begin{theorem} \label{GreenbergConj2}
The quotient $T/L$ admits a Hecke-equivariant isogeny over $K_p$ to the rigid analytic space associated with the product of two copies of $J_0^D(pM)^{\text{$p$-{\rm new}}}$.
\end{theorem}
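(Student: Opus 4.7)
The plan is to match $T/L$ with the rigid analytic uniformization of $J\times J$, where $J:=J_0^D(pM)^{\text{$p$-new}}$, by a three-step strategy: identify the ambient torus, reduce the lattice comparison to a single scalar per Hecke eigencomponent, and recognize that scalar as an $\mathcal L$-invariant.

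First, I would set up the geometric target. Since $p\nmid D$ and $p$ exactly divides the level, $J$ has purely toric reduction at $p$, so Grothendieck--Mumford--Raynaud gives a rigid uniformization $T_J/\Lambda_J$, with $T_J$ a torus of dimension $g$ whose character group is identified, via the monodromy filtration, with the $p$-new part of the first homology of the dual graph of the special fibre of $X_0^D(pM)$ at $p$. Combining the standard description of this dual graph in terms of the supersingular divisor on $X_0^D(M)_{/\F_p}$ with the Ribet/Edixhoven formula for the character group in terms of the degeneracy maps $\pi_1,\pi_2$ should produce a Hecke-equivariant isogeny $T\to T_J\times T_J$. This identification is defined only over $K_p$ because the nontrivial element of $\Gal(K_p/\Q_p)$ permutes the two factors --- a manifestation of the Frobenius-semilinear action on the components of the geometric reduction --- which accounts both for the doubling and for the field of definition in the statement.

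Second, let $\Lambda_{\mathrm{geo}}\subset T$ denote the preimage of $\Lambda_J\times\Lambda_J$ under the isogeny of Step 1. Both $L$ and $\Lambda_{\mathrm{geo}}$ are full-rank, Hecke-stable lattices in $T(\Q_p)$, and by multiplicity one for $p$-new eigenforms each is free of rank one over $\T\otimes\Q$ on every simple isotypic component. On each such component, corresponding to a $p$-new eigenform $f$, the comparison therefore collapses to the equality of a single scalar $\lambda_f\in\Q_p^\times/\Q^\times$.

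Third, I would identify $\lambda_f$ with an $\mathcal L$-invariant on each side. Analytically, pairing the cocycle in $H^1(\Gamma,\Meas(\PP^1(\Q_p),H))$ from Sections \ref{mvc}--\ref{periods} against the image in $H_1(\Gamma,\Div^0\mathcal H_p)$ of a generator of $H_2(\Gamma,\Z)$ computes $\lambda_f$ as a multiplicative $p$-adic integral which, on the $f$-eigenspace, should be the Darmon--Teitelbaum $\mathcal L$-invariant of $f$. Geometrically, Mumford's theta-function description of $\Lambda_J$ expresses the same scalar as the Mazur--Tate--Teitelbaum $\mathcal L$-invariant of $f$. The equality of the two is the theorem of Greenberg--Stevens, transferred to the quaternionic setting via Jacquet--Langlands (or imported from \cite{DG}). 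The principal obstacle is this third step: verifying that the cup-product construction of Sections \ref{mvc}--\ref{periods} genuinely outputs the Darmon--Teitelbaum $\mathcal L$-invariant requires delicate control of the lift of the measure-valued cocycle to the line bundle over $\PP^1(\Q_p)$ that the introduction flags as the technical heart of the paper.
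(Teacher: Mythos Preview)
Your overall strategy---reduce to comparing two Hecke-stable lattices in the same torus and identify the comparison scalar on each eigencomponent with an $\mathcal L$-invariant---is the paper's strategy. But your account of the doubling is wrong. The two copies of $J$ do not arise from $\Gal(K_p/\Q_p)$ permuting factors; they arise from the Atkin--Lehner involution $W_\infty$ at the archimedean place. The paper decomposes $H$ (of rank $2g$) into its $W_\infty$-eigenquotients $H_+$ and $H_-$, each of rank $g$, obtaining a $2$-power isogeny $T/L\to T_+/L_+\oplus T_-/L_-$ already over $\Q_p$, and then matches each $T_\epsilon/L_\epsilon$ separately with $J$ using that $H_\epsilon\otimes\Q$, $X\otimes\Q$ and $X^*\otimes\Q$ are all free of rank one over $\T\otimes\Q$. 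The field $K_p$ enters not through any Galois twisting of the factors but simply because the map $\Phi$ is computed via a base point $\tau\in K_p\smallsetminus\Q_p$.

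On the hard step you correctly flag: the paper does not compute a Darmon--Teitelbaum invariant on one side and a Mazur--Tate--Teitelbaum invariant on the other and then quote Greenberg--Stevens for their equality. Instead it works with a single $\mathcal L$-invariant $\mathcal L_p^D\in\T_0^{D,\ord}(M)$, defined Hida-theoretically as the derivative of $1-U_p^2$ along the weight direction, and proves the Hecke-equivariant identity $\mathcal L_p^D\cdot\ord_p=\log_p$ on all of $L$ at once (Theorem~\ref{prop-I}). The mechanism is the amalgam $\Gamma=\Gamma_0^D(M)\ast_{\Gamma_0^D(pM)}\hat\Gamma_0^D(M)$ and its Mayer--Vietoris sequence: the lift $\tilde\mu$ of the measure-valued cocycle to the primitive-vector bundle $\X\to\PP^1(\Q_p)$ is used to build explicit $1$-cochains $\rho$ and $\hat\rho$ that split $\log_p(d)$ on each amalgam factor, and a direct computation with the $U_p$-operator on $\cM_\X$ shows that the Mayer--Vietoris obstruction $(\rho-\hat\rho)|_{\Gamma_0^D(pM)}$ is exactly $\mathcal L_p^D\cdot\mu_U$, whence $\mathcal L_p^D\cdot\ord_p(\boldsymbol d)=\log_p(\boldsymbol d)$. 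The Greenberg--Stevens input appears only on the geometric side, as the relation $\mathcal L_p^D\cdot\ord_X=\log_X$ on the supersingular-divisor group $X$ (Proposition~\ref{prop-II}); the matching of the two lattices then drops out of the commutativity of diagram~\eqref{dia-II}. So your outline is right in spirit, but the doubling explanation needs to be corrected, and the actual analytic argument is a Mayer--Vietoris splitting computation rather than an eigencomponent-by-eigencomponent identification of named $\mathcal L$-invariants.
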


We conclude this introduction by remarking that a proof of the conjecture proposed by M. Greenberg in \cite[Conjecture 2]{Gr} and alluded to above is given in \S \ref{greenberg-subsec}.

\bigskip

\noindent\emph{Notation and conventions.} If $M$ and $N$ are two abelian groups we write $M\otimes N$ for $M\otimes_\Z N$.

If $R$ is a ring and $M$ is a left $R$-module we endow $M$ with a structure of right $R$-module by the formula $m|r:=r^{-1}\cdot m$, where $(r,m)\mapsto r\cdot m$ is the structure map of $M$ as a left $R$-module.

For any ring $A$ and any $A$-module $M$ the symbol $M_T$ denotes the maximal torsion-free quotient of $M$.

For any discontinuous cocompact subgroup $G$ of $\mathrm{PSL}_2(\R)$ there are canonical isomorphisms
\[ H_1(G\backslash\mathcal H,\Z)\simeq H_1(G,\Z)_T\simeq G^{\rm ab}_T. \]
In the sequel we shall freely identify these three groups, and for every $g\in G$ we shall denote by $[g]\in G^{\rm ab}_T$ the class of $g$ in any of them. If $G_0$ is a subgroup of $G$ then the natural map $\pi:G_0\backslash\mathcal H\rightarrow G\backslash\mathcal H$ of Riemann surfaces induces by pull-back and push-forward homomorphisms
\[ \pi^*:H_1(G,\Z)_T\longrightarrow H_1(G_0,\Z)_T,\qquad\pi_*: H_1(G_0,\Z)_T\longrightarrow H_1(G,\Z)_T \]
which respectively translate, under the above identifications, to restriction and corestriction in homology of groups.

If $G$ is a group we denote by $(F_\bullet,\partial_\bullet)$ the standard resolution of $\Z$ by left $\Z[G]$-modules and, in non-homogeneous notations, we write $[g]=[g_1|\dots|g_r]$ for the elements of a $\Z[G]$-basis of $F_r$ as described in \cite[p. 18]{Br}.

For any right $\Z[G]$-module M we write, as usual, $B^r(G,M)\subset Z^r(G,M)\subset C^r(G,M)$ for the modules of $r$-coboundaries, $r$-cocycles and $r$-cochains, respectively, and $H^r(G,M):=Z^r(G,M)/B^r(G,M)$ for the $r$th cohomology group of $G$ with values in $M$. We use a similar notation, with lower indices this time, for homology.

We represent an element of $H_r(G,M)$ by an $r$-cycle $c=\sum_gm_g\otimes[g]$ in $Z_r(G,M)$. Likewise, we represent an element of $H^r(G,M)$ by a function $f:G^r\rightarrow M$ in $Z^r(G,M)$ and denote sometimes the value $f(g_1,\dots,g_r)$ by $f_{g_1,\dots,g_r}$. Finally, we adopt the description of boundary and coboundary maps given in \cite[p. 59]{Br}.

\vspace{0.5cm}

\noindent\emph{Acknowledgements.} We would like to thank the anonymous referee for the careful reading and for valuable remarks and suggestions. The second and third authors also thank the Centre de Recerca Matemàtica (Bellaterra, Spain) for its warm hospitality in Autumn 2009, when part of this research was carried out.

\section{Hecke operators on homology and cohomology} \label{section-Hecke}

\subsection{Review of the general theory} \label{section-formal-Hecke}

In this subsection we essentially follow \cite[\S 1.1]{AS}. Let $\mathcal G$ be a group; a \emph{Hecke pair (for $\mathcal G$)} consists of a subgroup $G$ of $\mathcal G$ and a subsemigroup $S$ of $\mathcal G$ such that
\begin{itemize}
\item $G\subset S$;
\item $G$ and $s^{-1}Gs$ are commensurable for every $s\in S$.
\end{itemize}
Now let $(G,S)$ be a Hecke pair and let $M$ be a left $\Z[S]$-module. Fix a double coset $GsG$ with $s\in S$ and form the finite disjoint decomposition $GsG=\coprod_is_iG$. Define the function $t_i:G\rightarrow G$ by the equation $g^{-1}s_i=s_{j(i)}t_i^{-1}(g)$. The Hecke operator $T(s)$ on the chain $c=\sum_gm_g\otimes[g]$ in $C_r(G,M)$ is defined by the formula
\[ T(s)\cdot c:=\sum_is_i^{-1}(m_g)\otimes\bigl[t_i(g_1)|\dots|t_i(g_r)\bigr], \]
where $g=[g_1|\dots|g_r]$ in non-homogeneous notation. Likewise, we define the Hecke operator $T(s)$ on the cochain $f\in C^r(G,M)$ by the formula
\[ T(s)\cdot f(g_1,\dots,g_r):=\sum_is_if\bigl(t_i(g_1),\dots,t_i(g_r)\bigr). \]
These operators induce operators, denoted by the same symbols, on $H_r(G,M)$ and $H^r(G,M)$.

If $M$ and $N$ are left $\Z[G]$-modules we may consider the cup product
\[ [\,,]:H_1(G,M)\times H^1(G,N)\longrightarrow H_0(G,M\otimes N), \]
which is defined as follows. Choose representatives $c=\sum_gm_g\otimes[g]$ of $\boldsymbol{c}\in H_1(G,M)$ and $f$ of $\boldsymbol{f}\in H^1(G,M)$;  then $[\boldsymbol{c},\boldsymbol{f}]$ is represented by
\[ [c,f]:=\sum_gm_g\otimes f(g). \]
It is easy to check that
\begin{equation} \label{Heckepairing}
[T(s)\cdot c,f]=[c,T(s)\cdot f],
\end{equation}
for all $s\in S$, from which one gets the equality
\[ \big[T(s)\cdot\boldsymbol{c},\boldsymbol{f}\big]=\big[\boldsymbol{c},T(s)\cdot\boldsymbol{f}\big]. \]

\subsection{Hecke algebras attached to Eichler orders over $\Z$} \label{HZ}

We apply the previous formal considerations to our arithmetic setting. Let $B$ be the quaternion algebra over $\Q$ of discriminant $D\geq1$; the requirement that $D>1$ will be made only from \S \ref{sec-def-meas} on. Fix an integer $N\geq1$ prime to $D$ and let $R\subset B$ be an Eichler order of level $N$; set $\Gamma_{R}:=R^\times$. For every integer $n\geq1$ and every prime $\ell$ define
\[ \Gamma_0^{\rm loc}(\ell^{n}):=\left\{\smallmat abcd\in\M_2(\Z_\ell)\mid c\equiv 0 \pmod{\ell^n}\right\}. \]
If $\ell\nmid N$ define $S_{R,\ell}$ to be the set of elements in $R\otimes\Z_\ell$ with non-zero norm. If there exists an integer $n_\ell\geq1$ such that $\ell^{n_\ell}|N$ and $\ell^{n_\ell+1}\nmid N$ fix an isomorphism of algebras
\[ \iota_\ell:B\otimes_\Q\Q_\ell\overset{\simeq}{\longrightarrow}\M_2(\Q_\ell) \]
such that $\iota_\ell(R\otimes\Z_\ell)=\Gamma_0^{\rm loc}(\ell^{n_\ell})$, and define $S_{R,\ell}$ to be the inverse image of the semigroup consisting of matrices $\smallmat abcd\in\M_2(\Z_\ell)$ with $c\equiv0\pmod{\ell^{n_\ell}}$, $a\in\Z_\ell^\times$ and $ad-bc\neq0$. Finally, set
\[ S_R:=B^\times\cap\prod_\ell S_{R,\ell}, \]
where the product is taken over all prime numbers $\ell$. Then $(\Gamma_R,S_R)$ is a Hecke pair. Write ${\rm nr}:B\rightarrow\Q$ for the reduced norm; for every integer $n\geq1$ define
\[ T_n:=\sum_{\substack{\alpha\in S_R\\{\rm nr}(\alpha)=n}}T(\alpha), \]
and for every integers $n\geq1$ prime to $ND$ define
\[ T_{n,n}:=T(n). \]
If $\ell$ is a prime then we have $T_\ell=T(g_0)$ for a certain $g_0=g_0(\ell)\in R$; moreover,
\[ \G_Rg_0\G_R=\coprod_ig_i\G_R \]
for some $g_i=g_i(\ell)\in R$ of reduced norm $\ell$ and $i\in \{0,\dots,\ell\}$ if $\ell\nmid N$ (respectively, $i\in\{0,\dots,\ell-1\}$ if $\ell|N$). As customary, if $\ell|N$ is a prime we will also denote $T_\ell$ by $U_\ell$ to emphasize that we are considering an operator at a prime dividing the level. The Hecke algebra $\cl H(\Gamma_R,S_R)$ of the pair $(\Gamma_R,S_R)$, defined in \cite[p. 194]{AS}, is commutative and can be explicitly described as
\[ \cl H(\Gamma_R,S_R)=\Z\bigl[\text{$T_\ell$ for all primes $\ell$, $T_{\ell,\ell}$ for primes $\ell\nmid ND$}\bigr]. \]
See \cite[\S 5.3]{Miy} for details and proofs. As before, let $\om_p\in R(pM)$ be a fixed element of reduced norm $p$ which normalizes $\Gap$; as a piece of notation, for $R=R(M)$, $R=R(pM)$ and $R=\hat R(pM):=\om_p R(pM)\om_p^{-1}$ we denote $\cl H (\Gamma_R,S_R)$ by $\cl H(M)$, $\cl H(pM)$ and $\hat{\mathcal H}(pM)$, respectively.

We will be particularly interested in the Hecke operator $U_p\in\mathcal H(pM)$. In this case, $U_p = T(g_0)$ for a fixed choice of $g_0\in R(pM)$ of reduced norm $p$ such that $\iota_p(g_0)=\smallmat100p u_0$ for some $u_0\in\Gamma_0^{\rm loc}(p)$; the element $g_0$ gives rise to a coset decomposition
\[ \Gamma_0^D(pM)g_0\Gamma_0^D(pM)=\coprod_{i=0}^{p-1}g_i\Gamma_0^D(pM) \]
with the $g_i$ such that $\iota_p(g_i)=\smallmat 10{pi}pu_i$ for some $u_i\in\Gamma_0^{\rm loc}(p)$ and every $i\in\{0,\dots,p-1\}$.

Fix once and for all an element $\om_{\infty}\in R(pM)$ of reduced norm $-1$ which normalizes $\Gap$. In addition to the operators described above, the involutions $W_p=T(\om_p)$ and $W_\infty=T(\om_\infty)$ in $\mathcal H(pM)$ will also play a key role in our discussion. More precisely, $\om_p$ can be taken such that
\[ \iota_p(\om_p)\in \smallmat0{-1}p0\cdot\Gamma_0^{\rm loc}(p). \]
A direct computation then shows that the $\alpha_i:=\om_p^{-1}g_i$ lie in $\Ga$ and that, actually, $\{\alpha_\infty=1,\alpha_0,\dots,\alpha_{p-1}\}$ is a set of representatives of $\Ga/\Gap$; from this one deduces the well-known fact that $U_p=-W_p$ on $H$.

\subsection{A Hecke algebra attached to $R(M)\otimes\Z[\frac{1}{p}]$}

The formalism described in \S \ref{section-formal-Hecke} can also be applied to the Hecke pair $(\Gamma, S_{(p,M)})$ where $\G$ is as in the introduction and
\[ S_{(p,M)}:=B^\times\cap\prod_{\ell\not=p} S_{R(M),\ell}, \]
the product being taken over all prime numbers different from $p$. Throughout we shall write $\cl H(p,M)$ as a shorthand for the Hecke algebra corresponding to the pair $(\Gamma, S_{(p,M)})$, which is again commutative.

Similarly as before, in this algebra one defines Hecke operators $T_\ell$ for primes $\ell\ne p$ and involutions $W_p$ and $W_\infty$. These operators correspond to double coset decompositions $\G g_0(\ell)\G=\coprod_ig_i(\ell)\G$, $\G\om_p\G=\om_p\G$ and $\G\om_\infty\G=\om_\infty\G$, respectively, with exactly the \emph{same choices} of the $g_i(\ell)$, of $\om_p$ and of $\om_\infty$ in $R(pM)$ as made before. Finally, in complete analogy with what has just been done, one can also introduce the Hecke operator $U_p$. However, since $\G g\G=\om_p\G$ for any $g\in S_{(p,M)}$ of reduced norm $p$, now we have $U_p=W_p$.

\section{$\mathcal L$-invariants}\label{L}

\subsection{Singular homology groups}

Recall the isomorphism $\iota_p:B\otimes_\Q\Q_p\simeq\M_2(\Q_p)$ of \eqref{iota-p-eq} and the Eichler order $R=R(M)$ of $B$ of level $M$ chosen in the introduction. For every integer $r\geq1$ let $C_r\subset\widehat R^\times$ denote the subgroup of elements whose $p$-component is mapped by $\iota_p$ to a matrix $\smallmat abcd$ such that $a\equiv 1\pmod{p^r}$ and $c\equiv0\pmod{p^r}$. Moreover, let $\Gamma_r^D$ be the subgroup of norm $1$ elements in $C_r\cap B^\times$; we shall write $\G^D_r(M)$ whenever the level of $R$ is not fixed from the outset.

Note that $\Gamma_r^1= \Gamma_0(M)\cap\Gamma_1(p^r)$
as a congruence subgroup of $\SL_2(\Z)$. Finally, in order to have uniform notations, set
$X_0^D:=X_0^D(pM)$ and $\Gamma_0^D:=\Gamma_0^D(pM)$.

For every $r\geq1$ define the compact Riemann surfaces
\[ X_r^D:=\Gamma_r\backslash\cl H\quad\text{if $D>1$},\qquad X_r^1:=\Gamma_r\backslash\cl H\cup\{\text{cusps}\}, \]
where $X^1_r$ is the compactification of the open modular curve $Y^1_r:=\Gamma_r\backslash\cl H$ obtained by adding a finite number of cusps.

Let $S_2(\Gamma_r^D,\C)$ be the $\C$-vector space of holomorphic $1$-forms on $X_r^D$, which is isomorphic to $H^1 (X_r^D,\R)$ as an $\R$-vector space (see, e.g., \cite[Theorem 8.4]{Sh2}). In particular, if $g_r^D$ is the genus of $X_r^D$ then the dimension of $H^1(X_r^D,\R)$ over $\R$ is $2g_r^D$. Since $X_r^D$ is compact, Poincar{\'e} duality gives an isomorphism
\[ H^1\bigl(X_r^D,\R\bigr)\simeq H_1\bigl(X_r^D,\R\bigr) \]
of $\R$-vector spaces. As a consequence, the universal coefficient theorem for homology yields canonical isomorphisms of $\R$-vector spaces
\begin{equation} \label{eq-1}
S_2\bigl(\Gamma_r^D,\C\bigr)\simeq H_1\bigl(X_r^D,\R\bigr)\simeq H_1\bigl(X_r^D,\Z\bigr)\otimes_\Z\R
\end{equation}
In particular, the abelian group $H_1(X_r^D,\Z)$ is free of rank $2g_r^D$. The above discussion and the universal coefficient theorem for homology show that $H_1(X_r^D,\Z_p)$ is also free of rank $2g_r^D$ as a $\Z_p$-module.

There are canonical projection maps
\[ \pi_{1,r}:X_r^D\longrightarrow X_0^D(M),\qquad X_r^D\longrightarrow X_s^D \]
for $r\geq s\geq0$. For every integer $r\geq0$ let $W_p$ denote the Atkin--Lehner involution on $X_r^D$ defined as in \cite[\S 1.5]{BD0} via the adelic description of $X_r^D$ as the double coset space
\[ X_r^D=B^\times\big\backslash\bigl(\widehat B^\times\times\cl H\bigr)\big/C_r. \]
Explicitly, $W_p$ is the map $[(g,z)]\mapsto\bigl[\bigl(\smallmat0{-1}{{p}}0\cdot g,z\bigr)\bigr]$. Define
\[ \pi_{2,r}:=\pi_{1,r}\circ W_p:X_r^D\longrightarrow X_0^D(M). \]
This gives rise to a map
\[ \pi_r:=\pi_{1,r}\times \pi_{2,r}:X_r^D\longrightarrow X_0^D(M)\times X_0^D(M) \]
and thus, by pull-back, to a map
\[ \pi_r^*:H_1\bigl(X_0^D(M),\Z_p\bigr)\oplus H_1\bigl(X_0^D(M),\Z_p\bigr)\longrightarrow H_1\bigl(X_r^D,\Z_p\bigr) \]
on homology groups. (Note that for $r=0$ these maps coincide with the maps $\pi_1$, $\pi_2$, $\pi$, $\pi_1^*$, $\pi_2^*$, $\pi^*$ appearing in the introduction.) For $r\geq 0$ define the $\Z_p$-module
\[ H_r^D:=\bigl[H_1\bigl(X_r^D,\Z_p\bigr)/\text{Im}(\pi_r^*)\bigr]_T \]
and let $\T_r^D$ denote the image in $\End_{\Z_p}(H_r^D)$ of the Hecke algebra $\cl H(pM)\otimes\Z_p$; as above, we shall rather write $\T_r^D(M)$ if needed. Thanks to isomorphisms \eqref{eq-1}, it follows that $\T_r^D$ is canonically identified with the $p$-new quotient of the classical Hecke algebra acting on $S_2\bigl(\Gamma_r^D,\C\bigr)$ as defined, for example, in \cite[\S 2]{Hida-annals}.

\subsection{Jacquet--Langlands correspondence} \label{section-JL}

Denote by $\T_r^{1,\text{$D$-new}}(D M)$ the quotient of the Hecke algebra $\T^1_r(D M)$ acting faithfully on the $\C$-vector space of weight $2$ cusp forms on $\Gamma^1_r(DM)$ which are new at $D$. For every $r\geq0$ the Hecke-equivariance of the Jacquet--Langlands correspondence between classical and quaternionic modular forms (see, e.g., \cite[Theorem 2.30]{Hida-HMFIT}) gives a canonical isomorphism of rings
\begin{equation} \label{JL-eq}
{\rm JL}_r: \T_r^{1,\text{$D$-new}}(D M) \overset{\simeq}{\longrightarrow}\T_r^D(M)
\end{equation}
making the natural diagram
\[ \xymatrix@C=30pt@R=25pt{\T_r^{1,\text{$D$-new}}(D M)\ar[r]^-{{\rm JL}_r}\ar[d] & \T_r^D(M)\ar[d] \\
                           \T_s^{1,\text{$D$-new}}(D M)\ar[r]^-{{\rm JL}_s} & \T_s^D(M)} \]
commutative.

\subsection{Quaternionic Hida theory}

Being finitely generated as a $\Z_p$-module, the algebra $\T_r^D(M)$ is isomorphic to the product of the localizations at its (finitely many) maximal ideals; write $\T_r^{D,\ord}=\T_r^{D,\ord}(M)$ for the product of those local components in which the image of $U_p$ is a unit. Following Hida, define the \emph{ordinary Hecke algebra} as
\[ \T_\infty^{D,\ord}:=\invlim_r\T_r^{D,\ord}\qquad(r\geq1). \]
Furthermore, if we set
\[ \T_\infty^{1,\text{$D$-new}}(DM):=\invlim_r\T_r^{1,\text{$D$-new}}(DM) \]
then isomorphism \eqref{JL-eq} shows that there is a canonical isomorphism
\[ \T_\infty^{1,\text{$D$-new},\ord}(DM)\simeq\T_\infty^{D,\ord}(M). \]
Denote by
\[ \Lambda:=\Z_p[\![1+p\Z_p]\!],\qquad\tilde\Lambda:=\Z_p[\![\Z_p^\times]\!] \]
the Iwasawa algebras of $1+p\Z_p$ and $\Z_p^\times$, respectively, so that $\tilde\Lambda$ has a natural $\Lambda$-algebra structure. There is a structure of $\tilde\Lambda$-module on $\T_\infty^{D,\ord}(M)$ defined on group-like elements $d\in\Z_p^\times$ by $d\mapsto\langle d\rangle$. Since, as a consequence of the Jacquet--Langlands isomorphism, $\T_r^D(M)$ is a quotient of $\T_r^{1}(DM)$ and the projection map takes $U_p$ to $U_p$, there is a canonical surjective map of $\tilde\Lambda$-algebras
\[ \T_\infty^{1,\ord}(D M)\;\longepi\;\T_\infty^{D,\ord}(M). \]
Thanks to \cite[Theorem 3.1]{h-iwasawa}, $\T_\infty^{1,\ord}(DM)$ is a $\Lambda$-algebra which is free of finite rank as a $\Lambda$-module. In particular, it immediately follows that $\T_\infty^{D,\ord}(M)$ is finitely generated as a $\Lambda$-module. Thanks to \cite[Corollary 3.2]{h-iwasawa} (see also \cite[Theorem 5.6]{Das} for the result in this form), if $I_{\tilde\Lambda}$ is the augmentation ideal of $\tilde\Lambda$ then the canonical projection
\[ \T_\infty^{1,\ord}(DM)\;\longepi\;\T^{1,\ord}_0(DM) \]
induces an isomorphism of $\Z_p$-algebras
\begin{equation} \label{thm-hida}
\rho:\T_\infty^{1,\ord}(DM)\big/I_{\tilde\Lambda}\T_\infty^{1,\ord}(DM)\overset{\simeq}{\longrightarrow}\T_0^{1,\ord}(DM).
\end{equation}
The next result is the counterpart of isomorphism \eqref{thm-hida} in our general quaternionic setting.
\begin{proposition} \label{prop-hida}
For every $D\geq1$ the canonical projection $\T_\infty^{D,\ord}(M)\twoheadrightarrow\T^{D,\ord}_0(M)$ induces an isomorphism of $\Z_p$-algebras
\[ \rho_D:\T_\infty^{D,\ord}(M) \big/I_{\tilde\Lambda}\T_\infty^{D,\ord}(M)\overset{\simeq}{\longrightarrow}\T_0^{D,\ord}(M) \]
which sits in the commutative diagram
\[ \xymatrix{\T_\infty^{1,\ord}(DM)\big/I_{\tilde\Lambda}\T_\infty^{1,\ord}(DM)\ar[r]^-{\rho_1}\ar@{->>}[d] & \T_0^{1,\ord}(DM)\ar@{->>}[d]\\
             \T_\infty^{D,\ord}(M)\big/I_{\tilde\Lambda}\T_\infty^{D,\ord}(M)\ar[r]^-{\rho_D} & \T_0^{D,\ord}(M)} \]
where the vertical arrows are the canonical surjections.
\end{proposition}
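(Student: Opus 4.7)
The strategy is to reduce Proposition \ref{prop-hida} to the classical case $\rho_1$ recorded in \eqref{thm-hida}, using the Jacquet--Langlands correspondence of \S \ref{section-JL}. For each $r\geq 0$, set $J_r:=\ker\bigl(\T_r^{1,\ord}(DM)\twoheadrightarrow\T_r^{D,\ord}(M)\bigr)$, the $D$-old ideal at level $r$; thus $\T_r^{D,\ord}(M)\simeq \T_r^{1,\ord}(DM)/J_r$ via ${\rm JL}_r$. The compatibility of the Jacquet--Langlands isomorphisms with level transitions (the commutative diagram following \eqref{JL-eq}) yields a commutative square of surjective $\tilde\Lambda$-algebra homomorphisms
\[ \xymatrix{\T_\infty^{1,\ord}(DM)\ar@{->>}[r]^-{\pi^{\infty}}\ar@{->>}[d] & \T_\infty^{D,\ord}(M)\ar@{->>}[d]^{\beta}\\ \T_0^{1,\ord}(DM)\ar@{->>}[r]^-{\pi^{0}} & \T_0^{D,\ord}(M),} \]
whose reduction modulo the augmentation ideals specializes to the diagram in the proposition.

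\textbf{Well-definedness and surjectivity.} The left vertical arrow kills $I_{\tilde\Lambda}\T_\infty^{1,\ord}(DM)$ by $\rho_1$, while $\pi^\infty$ is $\tilde\Lambda$-linear and therefore carries $I_{\tilde\Lambda}\T_\infty^{1,\ord}(DM)$ onto $I_{\tilde\Lambda}\T_\infty^{D,\ord}(M)$. Commutativity of the square then forces $\beta$ to annihilate $I_{\tilde\Lambda}\T_\infty^{D,\ord}(M)$, and $\rho_D$ is well defined. Surjectivity follows from a diagram chase: the reduced square has its top horizontal and right vertical maps surjective, and $\rho_1$ is an isomorphism, so $\rho_D$ must be surjective as well.

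\textbf{Injectivity, the main obstacle.} Let $x\in\T_\infty^{D,\ord}(M)$ satisfy $\beta(x)=0$, and lift it to $\tilde x\in\T_\infty^{1,\ord}(DM)$; then its image at level $0$ lies in $J_0$. To conclude $x\in I_{\tilde\Lambda}\T_\infty^{D,\ord}(M)$ one must write $\tilde x=j+i$ with $j\in J_\infty$ and $i\in I_{\tilde\Lambda}\T_\infty^{1,\ord}(DM)$ (the latter being, by $\rho_1$, the kernel of the left vertical arrow), and this is possible exactly when the induced map $J_\infty\twoheadrightarrow J_0$ is surjective. I would establish the surjectivity in two steps. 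First, a snake-lemma application to the commutative diagram with exact rows $0\to J_r\to\T_r^{1,\ord}(DM)\to\T_r^{D,\ord}(M)\to 0$ at consecutive levels $r,r+1$ -- whose vertical transitions are already surjective, since the $\T_\infty$'s are defined as inverse limits of the $\T_r$'s -- yields the surjectivity of $J_{r+1}\twoheadrightarrow J_r$ for every $r\geq 0$. Second, the Mittag--Leffler criterion applied to the resulting inverse system with surjective transitions identifies $J_\infty$ with $\invlim_r J_r$ and, by iterated lifting, produces the desired surjection $J_\infty\twoheadrightarrow J_0$. The core of the argument is thus this level-wise control of the $D$-old ideal, made possible by the functoriality of the Jacquet--Langlands correspondence recorded in \S \ref{section-JL}.
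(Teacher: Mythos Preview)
Your argument is essentially the same as the paper's: both reduce to the classical isomorphism $\rho_1$ via the Jacquet--Langlands correspondence, set up the $D$-old ideals $J_r$, and deduce injectivity of $\rho_D$ from surjectivity of $J_\infty\to J_0$ together with the isomorphism $\rho_1$. The paper packages the final step as a single application of the snake lemma to the diagram obtained by tensoring the inverse-limit exact sequence with $\tilde\Lambda/I_{\tilde\Lambda}$ and comparing with the level-$0$ sequence; your element chase is just the snake lemma unwound. If anything, you make explicit the point the paper states without justification, namely the surjectivity of the left vertical arrow $J_\infty/I_{\tilde\Lambda}J_\infty\twoheadrightarrow J_0$.

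One small correction: your sentence ``whose vertical transitions are already surjective, since the $\T_\infty$'s are defined as inverse limits of the $\T_r$'s'' is not a valid justification---being defined as an inverse limit says nothing about surjectivity of the transition maps. What you need is the standard fact from Hida theory that the transition maps $\T_{r+1}^{1,\ord}(DM)\to\T_r^{1,\ord}(DM)$ (and hence, via ${\rm JL}_r$, also $\T_{r+1}^{D,\ord}(M)\to\T_r^{D,\ord}(M)$) are surjective; this is implicit in Hida's control theorem and in the freeness of $\T_\infty^{1,\ord}(DM)$ over $\Lambda$ recalled just before \eqref{thm-hida}. Once you invoke that, your snake-lemma step gives $J_{r+1}\twoheadrightarrow J_r$, and then the iterated lifting (no need to appeal to Mittag--Leffler per se---for countable towers with surjective transitions this is immediate) yields $J_\infty\twoheadrightarrow J_0$, and the proof is complete.
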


\begin{proof} For $D=1$ this is simply \eqref{thm-hida}. In general, we only have to show that the kernel of the canonical projection
\begin{equation} \label{rho-D-eq}
\T_\infty^{D,\ord}(M)\;\longepi\;\T^{D,\ord}_0(M)
\end{equation}
is $I_{\tilde\Lambda}\T_\infty^{D,\ord}(M)$. It is straightforward to check that $I_{\tilde\Lambda}\T_\infty^{D,\ord}(M)$ is indeed contained in the kernel of the homomorphism in \eqref{rho-D-eq}, hence there is a surjection
\[ \rho_D:\T_\infty^{D,\ord}(M)\big/I_{\tilde\Lambda}\T_\infty^{D,\ord}(M)\;\longepi\;\T_0^{D,\ord}(M) \]
of $\Z_p$-algebras. For every integer $r\geq0$ let us denote by $\T_r^{1,\text{$D$-old},\ord}(DM)$ the kernel of the projection $\T_r^{1,\ord} (DM)\twoheadrightarrow\T_r^{D,\ord}(M)$ induced by the Jacquet--Langlands correspondence recalled in \S \ref{section-JL}, so that we have a canonical short exact sequence
\begin{equation} \label{short-exact-eq}
0\longrightarrow\T_r^{1,\text{$D$-old},\ord}(DM)\longrightarrow\T_r^{1,\ord}(DM)\longrightarrow\T_r^{D,\ord}(M)\longrightarrow0.
\end{equation}
After setting $\T_\infty^{1,\text{$D$-old},\ord}(DM):=\invlim\T_r^{1,\text{$D$-old},\ord}(DM)$ and tensoring by $\tilde\Lambda/I_{\tilde\Lambda}$ over $\tilde\Lambda$, from sequence \eqref{short-exact-eq} we obtain the diagram
\[ \footnotesize{\xymatrix@C=15pt{&\T_\infty^{1,\text{$D$-old},\ord}(DM)\big/I_{\tilde\Lambda}\T_\infty^{1,\text{$D$-old},\ord}
                 \ar[r]\ar@{->>}[d]&\T_\infty^{1,\ord}(DM)\big/I_{\tilde\Lambda}\T_\infty^{1,\ord}(DM)\ar[r]\ar[d]^-{\rho}_-{\simeq}
                 &\T_\infty^{D,\ord}(M)\big/I_{\tilde\Lambda}\T_\infty^{D,\ord}(M)\ar[r]\ar@{->>}[d]^-{\rho_D}&0\\
                 0\ar[r]&\T_0^{1,\text{$D$-old},\ord}(DM)\ar[r]&\T_0^{1,\ord}(DM)\ar[r]&\T_0^{D,\ord}(M)\ar[r]&0}} \]
with exact rows and surjective left vertical arrow. The snake lemma then implies that the kernel of $\rho_D$ is trivial, which shows that $\rho_D$ is an isomorphism. \end{proof}

\subsection{Definition of the $\mathcal L$-invariant}\label{sec-expl-spl}

The map $[d-1]\mapsto d$ yields a canonical identification
\[ I_{\tilde\Lambda}/I_{\tilde\Lambda}^2\overset{\simeq}{\longrightarrow}\Z_p^\times. \]
Composing with the branch $\log_p:\Z_p^\times\rightarrow\Z_p$ of the $p$-adic logarithm satisfying $\log_p(p)=0$ we then obtain a map
\[ \log_p:I_{\tilde\Lambda}/I_{\tilde\Lambda}^2\longrightarrow\Z_p \]
which, by a notational abuse, will be denoted by the same symbol. The composition of the isomorphism
\[  I_{\tilde\Lambda}\T_\infty^{D,\ord}(M)\big/I_{\tilde\Lambda}^2\T_\infty^{D,\ord}(M)\simeq\T_\infty^{D,\ord}(M)\otimes_{\tilde\Lambda}\bigl(I_{\tilde\Lambda}\big/I_{\tilde\Lambda}^2\bigr) \]
with the map
\[ {\rm id}\otimes\log_p:\T_\infty^{D,\ord}(M)\otimes_{\tilde\Lambda}\bigl(I_{\tilde\Lambda}/I_{\tilde\Lambda}^2\bigr)\longrightarrow\T_\infty^{D,\ord}(M)\otimes_{\tilde\Lambda}\Z_p \]
produces a map
\begin{equation} \label{eq-hida-1}
I_{\tilde\Lambda}\T_\infty^{D,\ord}(M)\big/I_{\tilde\Lambda}^2\T_\infty^{D,\ord}(M)\longrightarrow\T_\infty^{D,\ord}(M)\otimes_{\tilde\Lambda}\Z_p.
\end{equation}
Now note that
\[ \T_\infty^{D,\ord}(M)\otimes_{\tilde\Lambda}\Z_p\simeq\T_\infty^{D,\ord}(M)\otimes_{\tilde\Lambda}\bigl(\tilde\Lambda/I_{\tilde\Lambda}\bigr)\simeq \T_\infty^{D,\ord}(M)\big/I_{\tilde\Lambda}\T_\infty^{D,\ord}(M)\simeq\T^{D,\ord}_0(M), \]
the last isomorphism following from Proposition \ref{prop-hida}. Composing this chain of isomorphisms with \eqref{eq-hida-1} yields a map
\[ I_{\tilde\Lambda}\T_\infty^{D,\ord}(M)\big/I_{\tilde\Lambda}^2\T_\infty^{D,\ord}(M)\longrightarrow\T^{D,\ord}_0(M). \]
Finally, composing with the canonical projection $I_{\tilde\Lambda}\T_\infty^{D,\ord}(M)\twoheadrightarrow I_{\tilde\Lambda}\T_\infty^{D,\ord}(M)\big/I_{\tilde\Lambda}^2\T_\infty^{D,\ord}(M)$ we obtain a map
\begin{equation} \label{eq-hida-2}
I_{\tilde\Lambda}\T_\infty^{D,\ord}(M)\longrightarrow\T^{D,\ord}_0(M)
\end{equation}
which is denoted by $t\mapsto t'$.

As discussed in \S \ref{HZ}, $U_p+W_p=0$ on $H_0^D$; hence, since $W_p^2=1$, we conclude that the image of $1-U_p^2$ in $\T_0^{D,\ord}(M)$ is trivial. It follows that
\[ 1-U_p^2\in I_{\tilde\Lambda}\T_\infty^{D,\ord}(M). \]

\begin{definition}
The \emph{$\mathcal L$-invariant}
\[ {\cl L}^D_p={\cl L}^D_p(M):=\bigl(1-U_p^2\bigr)'\in\T_0^{D,\ord}(M) \]
is the image of $1-U_p^2$ under the map \eqref{eq-hida-2}.
\end{definition}

Observe that $\cl L_p^1$ is equal to the $\mathcal L$-invariant defined by Dasgupta in \cite[Definition 5.2]{Das}.

\begin{proposition}
The $\mathcal L$-invariant ${\cl L}^D_p(M)$ is the image of $\cl L_p^1(DM)$ under the canonical surjection $\T_0^{1,\ord}(DM)\twoheadrightarrow\T_0^{D,\ord}(M)$.
\end{proposition}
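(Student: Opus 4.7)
The plan is to show that every map in the definition of the $\mathcal{L}$-invariant, namely the composition
\[ I_{\tilde\Lambda}\T_\infty^{?,\ord}\;\longepi\;I_{\tilde\Lambda}\T_\infty^{?,\ord}/I_{\tilde\Lambda}^2\T_\infty^{?,\ord}\overset{\mathrm{id}\otimes\log_p}{\longrightarrow}\T_\infty^{?,\ord}\otimes_{\tilde\Lambda}\Z_p\simeq\T_0^{?,\ord}, \]
is natural with respect to the surjection of $\tilde\Lambda$-algebras
\[ \pi_\infty:\T_\infty^{1,\ord}(DM)\;\longepi\;\T_\infty^{D,\ord}(M) \]
induced by the Jacquet--Langlands correspondence, and that this surjection sends $1-U_p^2$ to $1-U_p^2$. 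The conclusion will then follow by a straightforward diagram chase, using the equality $\mathcal{L}_p^?=(1-U_p^2)'$.

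First I would observe that $\pi_\infty$ is a morphism of $\tilde\Lambda$-algebras (this is built into the definition of the $\tilde\Lambda$-structure via $d\mapsto\langle d\rangle$ at finite level and the compatibility of Jacquet--Langlands with diamond operators, which is part of the commutative square defining $\mathrm{JL}_r$ in \S\ref{section-JL}). Consequently, $\pi_\infty$ maps $I_{\tilde\Lambda}\T_\infty^{1,\ord}(DM)$ into $I_{\tilde\Lambda}\T_\infty^{D,\ord}(M)$ and $I_{\tilde\Lambda}^2\T_\infty^{1,\ord}(DM)$ into $I_{\tilde\Lambda}^2\T_\infty^{D,\ord}(M)$, hence induces maps on the quotients. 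Tensoring $\pi_\infty$ with $\tilde\Lambda/I_{\tilde\Lambda}\simeq\Z_p$ over $\tilde\Lambda$ is compatible with the canonical presentation $\T_\infty^{?,\ord}\otimes_{\tilde\Lambda}(I_{\tilde\Lambda}/I_{\tilde\Lambda}^2)\to\T_\infty^{?,\ord}\otimes_{\tilde\Lambda}\Z_p$ coming from $\mathrm{id}\otimes\log_p$, and with the identification $\T_\infty^{?,\ord}\otimes_{\tilde\Lambda}\Z_p\simeq\T_0^{?,\ord}$ provided by Proposition \ref{prop-hida} (whose own commutative diagram is precisely what guarantees that the Hida specialization interchanges the two levels compatibly with $\pi_\infty$ and the projection $\T_0^{1,\ord}(DM)\twoheadrightarrow\T_0^{D,\ord}(M)$).

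Assembling these pieces, I get the commutative square
\[ \xymatrix@C=35pt{I_{\tilde\Lambda}\T_\infty^{1,\ord}(DM)\ar[r]^-{t\mapsto t'}\ar@{->>}[d]&\T_0^{1,\ord}(DM)\ar@{->>}[d]\\
I_{\tilde\Lambda}\T_\infty^{D,\ord}(M)\ar[r]^-{t\mapsto t'}&\T_0^{D,\ord}(M)} \]
where the right vertical map is the canonical surjection appearing in the statement. Second, the element $U_p\in\T_\infty^{1,\ord}(DM)$ is sent to $U_p\in\T_\infty^{D,\ord}(M)$ by $\pi_\infty$ (this is noted right before \eqref{thm-hida}, the projection taking $U_p$ to $U_p$), so $1-U_p^2\mapsto 1-U_p^2$ under the left vertical arrow. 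Chasing $1-U_p^2$ around the square yields
\[ \mathcal{L}_p^1(DM)\longmapsto\mathcal{L}_p^D(M), \]
which is exactly the claim.

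The only genuinely non-formal input is the commutativity of Proposition \ref{prop-hida}, which has already been established; once that is available, everything reduces to the tautological functoriality of $-\otimes_{\tilde\Lambda}\Z_p$ applied to a surjection of $\tilde\Lambda$-algebras. There is no real obstacle to overcome, only the bookkeeping of checking that the identifications used in defining $t\mapsto t'$ are compatible with $\pi_\infty$; the mild subtlety is ensuring that the isomorphism $\T_\infty^{?,\ord}\otimes_{\tilde\Lambda}\Z_p\simeq\T_0^{?,\ord}$ from Proposition \ref{prop-hida} is invoked in a way that keeps both levels on the same footing, which is exactly what the commutative square in that proposition provides.
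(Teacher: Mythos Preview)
Your proof is correct and takes essentially the same approach as the paper, which simply states that the result follows immediately from the definition of the $\mathcal L$-invariants and the commutativity of the diagram in Proposition \ref{prop-hida}. You have spelled out in detail exactly the diagram chase that the paper leaves implicit.
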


\begin{proof} This follows immediately from the definition of the $\mathcal L$-invariants and the commutativity of the diagram in Proposition \ref{prop-hida}. \end{proof}

\subsection{Singular points and $\mathcal L$-invariants} \label{singular-subsec}

The arguments in this subsection are essentially a formal variation on those in \cite{gs} and \cite[\S 5.4]{Das}, so we will be rather sketchy.

Let $X:=\Div^0(\mathcal S)$ denote the group of degree zero divisors on the set $\mathcal S$ of supersingular points of $X_0^D(M)$ in characteristic $p$, and write $X^*:=\mathrm{Hom}_\Z(X,\Z)$ for its $\Z$-dual. As explained, e.g., in \cite[\S 1.7]{helm}, the group $X$ has a natural Hecke action; moreover, the Hecke algebra of $X$ canonically identifies with that of $H_0^D$. There is a non-degenerate, symmetric pairing
\[ Q: X\times X\longrightarrow\Q^\times_p \]
for which the Hecke operators are self-adjoint. The map $Q$ defines an injection
\[ j:X\;\longmono\;X^*\otimes\Q_p^\times \]
by setting $j(x)(y):=Q(x,y)$. For simplicity, put $G_p:=\Gal(\bar\Q_p/\Q_p)$ for the local Galois group at $p$; there is a short exact sequence
\begin{equation} \label{unif-I}
0\longrightarrow X\stackrel{j}{\longrightarrow}X^*\otimes\bar\Q_p^\times\longrightarrow J_0^D(pM)^{p{\text{-new}}}(\bar\Q_p)\longrightarrow0
\end{equation}
of left $\T_0^D(M)[G_p]$-modules. Composing the pairing $Q$ with the $p$-adic valuation $\ord_p$ gives rise to the non-degenerate monodromy pairing
\[ \ord_p\circ Q:X\times X\rightarrow \Z \]
at $p$. Now set $\ord_X(x)(y):=\ord_p\bigl(Q(x,y)\bigr)$, thus obtaining an injection
\[ \ord_X:X\;\longmono\;X^*. \]
Analogously, if $\log_p$ is the branch of the $p$-adic logarithm such that $\log_p(p)=0$ then we obtain a map
\[ \log_X:X\longrightarrow X^*\otimes_\Z\Z_p \]
defined by $\log_X(x)(y):=\log_p\bigl(Q(x,y)\bigr)$.

Recall that $\mathcal L_p^D:=\mathcal L_p^D(M)$; the next result seems to be well known to experts.

\begin{proposition} \label{prop-II}
There is an equality $\mathcal L_p^D\cdot\ord_X=\log_X$ of maps from $X$ to $X^\ast\otimes\Z_p$.
\end{proposition}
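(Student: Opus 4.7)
The strategy is to deduce the result from the corresponding statement in the modular case $D=1$, which is essentially due to Greenberg--Stevens and has been recast in the form stated here by Dasgupta in \cite[\S 5.4]{Das}, by exploiting the Jacquet--Langlands correspondence together with the previous proposition (asserting that $\cl L_p^D(M)$ is the image of $\cl L_p^1(DM)$ under $\T_0^{1,\ord}(DM)\twoheadrightarrow\T_0^{D,\ord}(M)$).

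First I would set up a comparison of character groups. Denote by $X_1:=\Div^0(\mathcal S_1)$ the analogue of $X$ at level $\Gamma_0(DM)$, where $\mathcal S_1$ is the set of supersingular points of $X_0(DM)$ in characteristic $p$, and let $j_1, \ord_{X_1}, \log_{X_1}$ denote the associated maps. An Eichler--Jacquet--Langlands-style mass formula (see Ribet's treatment in the proof of his level-raising theorem) furnishes a canonical Hecke-equivariant identification of $X$ with the $D$-new subquotient $X_1^{D\text{-new}}$. Since the monodromy pairing on each character group is described intrinsically as the intersection multiplicity pairing on the dual graph of the special fibre of the corresponding (Shimura or modular) curve at $p$, this identification intertwines the pairings $Q$, and hence also the maps $\ord_X$ and $\log_X$ with the restrictions of $\ord_{X_1}$ and $\log_{X_1}$ to $X_1^{D\text{-new}}$.

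Second, the action of $\T_0^{1,\ord}(DM)$ on $X_1^{D\text{-new}}\simeq X$ factors through the Jacquet--Langlands projection $\T_0^{1,\ord}(DM)\twoheadrightarrow\T_0^{D,\ord}(M)$, and by the previous proposition this projection sends $\cl L_p^1(DM)$ to $\cl L_p^D(M)$. Consequently multiplication by $\cl L_p^1(DM)$ on $X_1^{D\text{-new}}$ coincides with multiplication by $\cl L_p^D(M)$ on $X$, and the quaternionic equality
\[
\cl L_p^D(M)\cdot\ord_X=\log_X
\]
follows by restriction from the modular identity $\cl L_p^1(DM)\cdot\ord_{X_1}=\log_{X_1}$ established by Dasgupta in the case $D=1$.

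The main obstacle is Step 1: verifying that the Jacquet--Langlands identification $X\simeq X_1^{D\text{-new}}$ intertwines the two \emph{geometric} pairings (monodromy and $p$-adic logarithm), rather than merely the Hecke actions. An alternative, perhaps more self-contained, route is to mimic Dasgupta's argument in \cite[\S 5.4]{Das} verbatim in the quaternionic setting: one uses the tower $\{\G_r^D\}$ of \S 3.1, together with Proposition \ref{prop-hida}, to construct a $\Lambda$-adic lift $\mathbb X_\infty^{D,\ord}$ of $X$ and lifts of the two pairings with values in $\T_\infty^{D,\ord}(M)$; one then computes their discrepancy modulo $I_{\tilde\Lambda}^2$ directly, identifying it with the image of $1-U_p^2$ under the map \eqref{eq-hida-2}, i.e.\ with $\cl L_p^D$. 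Either route relies ultimately on the Greenberg--Stevens description of the $\cl L$-invariant as a derivative of $U_p$ along a Hida family.
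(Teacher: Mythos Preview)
Your proposal is correct and aligns with the paper's own treatment: the paper does not give a detailed proof but explicitly sketches precisely the two routes you describe---the indirect one via the previous proposition (i.e., the commutative diagram of Proposition \ref{prop-hida}) combined with Dasgupta's \cite[Proposition 5.20]{Das}, and the direct adaptation of the Mazur--Wiles/Dasgupta argument \cite[\S 8]{MW} to the quaternionic tower $\{\Gamma_r^D\}$. You have additionally isolated the main technical point in the indirect route (compatibility of the monodromy pairings under the Jacquet--Langlands identification of character groups), which the paper leaves unaddressed.
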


There are at least two ways of proving the above statement but, for the sake of brevity, we will not provide any details, as the methods are very similar to the standard ones in the classical modular setting, already present in the literature (\cite{Das}, \cite{gs}). One way of showing it is to proceed as in the proof of \cite[Proposition 5.20]{Das}, upon noticing that the arguments of \cite[\S 8]{MW} can be adapted to our quaternionic setting. Besides, more indirectly, one can also prove Proposition \ref{prop-II} by exploiting the commutativity of the diagram of Proposition \ref{prop-hida} combined with \cite[Proposition 5.20]{Das}.

\section{Measure-valued 1-cocycles}\label{mvc}

\subsection{Bruhat--Tits tree, harmonic cocycles and measures on $\PP^1(\Q_p)$} \label{BT}

Let $\cl T$ be the Bruhat--Tits tree of $\M_2(\Q_p)$, whose set $\V=\V(\cl T)$ of vertices consists of the maximal orders of $\M_2(\Q_p)$. We denote by $v_\ast$ the vertex $\M_2(\Z_p)$ and by $\hat{v}_\ast$ the vertex $\bigl\{\smallmat a{p^{-1}b}{pc}d\mid a,b,c,d\in\Z_p\bigr\}$.

The set $\E =\E(\cl T)$ of oriented edges of $\cl T$ is the set of ordered pairs $(v_1, v_2)$ with $v_1,v_2\in\V$ such that $v_1\cap v_2$ is an Eichler order of level $p$. We call $v_1=s(e)$ and $v_2=t(e)$ the source and the target of $e$, respectively, and write $\bar e$ for the reversed edge $(v_2,v_1)$. Set $e_\ast:=(v_\ast, \hat{v}_\ast)$.

Given $v,v'\in\V$, the distance between $v$ and $v'$ is the length of a path without backtracking from $v$ to $v'$, i.e., the smallest number of edges needed to connect $v$ with $v'$.

The group $\GL_2(\Q_p)$ acts transitively and isometrically on $\cl V$ by the rule $v\mapsto g v g^{-1}$ for $v\in\cl V$ and $g\in\GL_2(\Q_p)$. Hence, it also gives rise to a natural action of $\GL_2(\Q_p)$ on $\cl E$, which is again transitive. As a piece of notation, write $\hat{v}:=\om_p(v)$ and $\hat{e}:=\om_p(e)$ for any $v\in \V$ and any $e\in \E$, respectively. Similarly, for any $\g\in\GL_2(\Q_p)$ and any subgroup $G$ of $\GL_2(\Q_p)$ write $\hat{\g}:=\om_p\g\om_p^{-1}$ and $\hat{G}:=\om_p G\om_p^{-1}$. Observe that $\hat{e}_\ast=\bar{e}_\ast$ for all $e\in\E$.

We say that a vertex of $\cl T$ is \emph{even} (respectively, \emph{odd}) if its distance from  $v_\ast$ is even (respectively, odd). We write $\V^+$ (respectively, $\V^-$) for the subset of $\V$ consisting of even (respectively, odd) vertices, and we write $\E^+$ (respectively, $\E^-$) for the subset of $\E$ made up of those oriented edges, called \emph{even} (respectively, \emph{odd}), whose source is even (respectively, odd). Notice that $\V^-=\hat{\V}^+$ and $\E^- = \bar{\E}^+=\hat{\E}^+$.

Let $\GL_2^+(\Q_p)$ be the subgroup of $\GL_2(\Q_p)$ whose elements are the matrices $\g$ such that $\ord_p(\det (\g))$ is even, and recall from the introduction the subgroup
\[ \Gamma:=\bigl(R(M)\otimes_\Z \Z[1/p]\bigr)^\times_1\;\overset{\iota_p}{\longmono}\;\GL^+_2(\Q_p). \]
It follows from \cite[Ch. II, Theorem 2]{Se} that the segment connecting $v_\ast$ and $\hat{v}_\ast$ is a fundamental domain for the action of $\Gamma$ on $\cl T$, by which we mean a subgraph $\cl T'$ of $\cl T$ such that every vertex (respectively, edge) of $\cl T$ is $\Gamma$-equivalent to a vertex (respectively, edge) in $\cl T'$. The stabilizers of $v_\ast$, $\hat{v}_\ast$ and $e_\ast$ in $\G$ are $\Ga$, $\hat{\Gamma}^D_0(M)$ and $\Gap$, respectively. Furthermore, by \cite[Ch. II, Theorem 3]{Se}, we know that
\begin{equation} \label{amalgam}
\Gamma = \Ga\ast_{\Gap}\hat{\Gamma}^D_0(M),
\end{equation}
that is, $\Gamma$ is the amalgamated product of the stabilizers of $v_\ast$ and $\hat{v}_\ast$ over the stabilizer of $e_\ast$.

The free abelian group $\Z[\cl E^+]$ over $\E^+$ can be canonically identified, via projection, with the quotient $C_\cl E$ of $\Z[\cl E]$ by the relations $e+\bar e=0$ for all $e\in\cl E$. Setting $C_\cl V:=\Z[\cl V]$, we obtain a short exact sequence
\begin{equation} \label{exseq}
0\longrightarrow C_\cl E\overset{\varphi}\longrightarrow C_\cl V\xrightarrow{\deg}\Z\longrightarrow0
\end{equation}
where $\varphi(e):=t(e)-s(e)$ and $\deg$ is the degree map.

If $X$ and $A$ are sets write $\cF(X, A)$ for the set of functions from $X$ to $A$. Now suppose that $A$ is an abelian group; there are two degeneracy maps
\[ \begin{array}{rccc}
   \varphi_s,\varphi_t:&\cF(\mathcal E,A) & \lra & \cF(\mathcal V,A)\\[3mm]
   &\nu&\longmapsto&\Big(\varphi_s(\nu):v\mapsto\sum_{s(e)=v}\nu(e)\Big)\\[3mm]
   &\nu&\longmapsto&\Big(\varphi_t(\nu):v\mapsto\sum_{t(e)=v}\nu(e)\Big).
   \end{array} \]
Put
\[ \cF_0(\mathcal E,A):=\bigl\{\nu\in\cF(\mathcal E,A)\mid\text{$\nu(\bar e)=-\nu(e)$ for all $e\in \mathcal E$}\bigr\}. \]
An $A$-valued \emph{harmonic cocycle} is a function $\nu\in\cF_0(\mathcal E,A)$ such that $\varphi_s(\nu)=0$; we write $\HC(A)$ for the abelian group of $A$-valued harmonic cocycles.

Finally, assume further that $A$ is a left $G$-module for some subgroup $G$ of $\PGL_2(\Q_p)$. Then $\cF(\E, A)$ and its submodules $\cF_0(\E, A)$ and $\HC(A)$ are endowed with a structure of left $G$-modules by the rule ${}^{g}\nu(e):=g\cdot\nu(g^{-1}e)$. The next result is proved in \cite[\S 8]{Gr}.

\begin{lemma}[Greenberg] \label{GreenbergsLemma}
The sequence of $\G $-modules
\[ 0\longrightarrow\HC(A)\longrightarrow\cF_0(\mathcal E,A)\overset{\varphi_s}{\longrightarrow}\cF(\mathcal V, A)\longrightarrow0 \]
is exact.
\end{lemma}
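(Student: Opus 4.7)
The plan is to check the three-term exactness directly, noting that exactness at $\HC(A)$ is essentially the definition of a harmonic cocycle: $\HC(A)$ is by construction the kernel of $\varphi_s$ restricted to $\cF_0(\E,A)$, so all the content of the lemma lies in showing that $\varphi_s:\cF_0(\E,A)\to\cF(\V,A)$ is surjective. The $\G$-equivariance of both maps is automatic, since $\varphi_s$ and the defining relation for $\cF_0(\E,A)$ are stable under the $\G$-action on $\V$ and $\E$ by graph automorphisms.

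To prove surjectivity I would construct a preimage of a given $f\in\cF(\V,A)$ by induction on the distance from the base vertex $v_\ast$. The key structural fact is that in a tree every vertex $v\neq v_\ast$ has a unique neighbor closer to $v_\ast$ (its \emph{parent}), while $v_\ast$ itself has $p+1$ neighbors, all of which will play the role of \emph{children}. Observe also that the $p+1$ oriented edges $e$ with $s(e)=v$ are in bijection with the $p+1$ unoriented edges at $v$, so it suffices to prescribe $\nu$ on these outgoing edges and then extend by $\nu(\bar e):=-\nu(e)$: this automatically places $\nu$ in $\cF_0(\E,A)$ and leaves only the vertex equations $\varphi_s(\nu)(v)=f(v)$ to verify.

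The construction itself goes as follows. First I would fix a distinguished outgoing edge $e_v$ at each vertex $v$. At $v_\ast$ I would set $\nu(e_{v_\ast}):=f(v_\ast)$ and $\nu(e):=0$ on the remaining $p$ outgoing edges, which immediately gives $\varphi_s(\nu)(v_\ast)=f(v_\ast)$. Inductively, for a vertex $v$ at distance $d\geq 1$ from $v_\ast$, the edge $e_{\mathrm{back}}$ with $s(e_{\mathrm{back}})=v$ pointing toward the parent satisfies $\nu(e_{\mathrm{back}})=-\nu(\bar e_{\mathrm{back}})$, and the right-hand side has already been prescribed in the previous inductive step. The remaining constraint $\sum_{s(e)=v}\nu(e)=f(v)$ then reduces to a single linear equation in the $\nu$-values on the $p$ edges toward the children of $v$, which I would solve by putting $\nu(e_v):=f(v)-\nu(e_{\mathrm{back}})$ and $\nu:=0$ on the other $p-1$ outgoing edges.

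The splitting built in this way is of course \emph{not} $\G$-equivariant, since it depends on the choice of root $v_\ast$ and of the distinguished edges $e_v$; but the lemma only claims exactness of $\G$-modules, and surjectivity of an equivariant map as abelian groups is enough. The main (and in truth only) obstacle is really bookkeeping: one must track signs carefully and consistently identify "outgoing edges at $v$" with "neighbors of $v$" so that the inductive step unambiguously picks out the correct parent edge. Once that convention is fixed, the induction closes without difficulty.
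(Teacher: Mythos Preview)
Your argument is correct. Exactness at $\HC(A)$ is indeed the definition, $\G$-equivariance of the maps is immediate, and your inductive construction of a preimage along the tree (exploiting that every non-root vertex has a unique parent) cleanly establishes surjectivity of $\varphi_s$. The bookkeeping caveat you flag is the only place to be careful, and you have handled it.

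As for comparison with the paper: the paper does not actually supply a proof of this lemma. It attributes the result to Greenberg and cites \cite[\S 8]{Gr}, so there is no in-text argument to set yours against. Your direct tree-induction is the standard way to see surjectivity and is presumably close in spirit to what Greenberg does; in any case it is self-contained and correct.
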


By applying Shapiro's lemma, the short exact sequence of Lemma \ref{GreenbergsLemma} induces a long exact sequence
\begin{equation} \label{Grlong}
\begin{split}
0\longrightarrow\HC(A)^{\G }&\longrightarrow A^{\Gamma_0^D(pM)}\longrightarrow(A\times A)^{\G_0(M)}\\
&\longrightarrow H^1\bigl(\G,\HC(A)\bigr)\overset{\varrho}{\longrightarrow}H^1\bigl(\G,\cF_0(\E,A)\bigr),
\end{split}
\end{equation}
where
\[ \mathrm{Im}(\varrho)\simeq H^1\bigl(\G _0(pM),A\bigr)_{\text{$p$-{\rm new}}}:=\ker\Big(H^1\bigl(\G _0(p M),A\bigr)\lra H^1\bigl(\G _0(M),A\bigr)^2\Big). \]
The group $\GL_2(\Q_p)$ acts on the left on $\PP^1(\Q_p)$ by fractional linear transformations and this action, as before, factors through $\PGL_2(\Q_p)$.

Set $U_{e_\ast}:=\Z_p$. Since $\GL_2(\Q_p)$ acts transitively on $\E$ and the stabilizer of $e_\ast$ in $\GL_2(\Q_p)$ is $\GL_2(\Z_p)$, we may define a map from $\cl E$ to the family of compact open subsets of $\PP^1(\Q_p)$ by
\[ e\longmapsto U_e:=\gamma(U_{e_\ast}), \]
where $\g\in\GL_2(\Q_p)$ is any element such that $e=\gamma(e_\ast)$. Notice that $U_{\bar e}=\PP^1(\Q_p)-U_e$. The sets $\{U_e\}_{e\in\cl E}$ form a basis of compact open subsets for the $p$-adic topology of $\PP^1(\Q_p)$.

Let $A$ be a free module of finite rank over either $\Z$ or $\Z_p$, equipped with a left action of a subgroup $G$ of $\PGL_2(\Q_p)$. Let $\cM (A):=\Meas \bigl(\PP^1(\Q_p),A\bigr)$ denote the space of $A$-valued measures on $\PP^1(\Q_p)$ and write $\cM_0(A)\subset \cM(A)$ for the submodule of measures of total mass $0$. Define a left action of $\Gamma$ on $\cM(A)$ by imposing that
\[ (\gamma\cdot\nu)(U):=\nu\bigl(\gamma^{-1}(U)\bigr) \]
for all compact open subsets $U$ of $\PP^1(\Q_p)$. Thanks to the above observation (see also, e.g., \cite[\S 2.3]{Das} and \cite[Lemma 27]{Gr}), there is a canonical isomorphism of $G$-modules
\begin{equation} \label{*}
\HC(A)\overset{\simeq}{\lra}\cM_0(A),\qquad c\longmapsto\nu_c
\end{equation}
given by the rule $\nu_c(U_e):=c(e)$.

\subsection{Construction of the measure-valued 1-cocycle} \label{sec-def-meas}

From here until the end of the paper we assume that $D>1$. In this subsection we define a measure-valued cohomology class $\boldsymbol\mu$ which will be a crucial ingredient for our purposes. The construction of $\boldsymbol{\mu}$ will be done in stages.

Choose a system $\mathcal Y$ of representatives for the cosets $\Gap\backslash\G$. Since $\G$ acts transitively on $\E^+$ and $\Gap$ is the stabilizer of $e_\ast$, we have $\mathcal Y=\{\g_e\}_{e\in\E^+}$ with $\g_e\in\G$ such that $\g_e(e)=e_\ast$. Any other system of representatives is of the form $\mathcal Y'=\{\g'_e\}_{e\in\E^+}$ with
\begin{equation} \label{f(e)-eq}
\g'_e=f(e)\g_e
\end{equation}
for a suitable $f(e)\in\Gap$.

\begin{definition} \label{muniv}
The \emph{universal $1$-cochain associated with $\mathcal Y$} is the $1$-cochain
\[ \mu^{\mathcal Y}_{\rm univ}:\G\longrightarrow\cF_0\big(\mathcal E,H_1\big(\Gap,\Z\big)_T\big)\simeq\cF_0\big(\mathcal E,\Gap^{\rm ab}_T\big) \]
determined for all $\gamma\in\Gamma$ by the following rules:
\begin{itemize}
\item for all $e\in\cl E^+$ let $g_{\g,e}\in\Gap$ be defined by the equation $\g_e\g=g_{\gamma,e}\g_{\g^{-1}(e)}$, then set
\[ \mu^{\mathcal Y}_{\rm univ}(\g)(e):=\bigl[g_{\gamma,e}\bigr]; \]
\item for all $e\in\cl E^-$ set
\[ \mu^{\mathcal Y}_{\rm univ}(\g)(e):=-\mu^{\mathcal Y}_{\rm univ}(\g)(\bar e). \]
\end{itemize}
\end{definition}
As in the introduction, let
\[ H=H_0^D:=\bigl[H_1\bigl(\Gamma_0^D(pM),\Z\bigr)_T\big/{\rm Im}(\pi^\ast)\bigr]_T. \]
Fix a non-zero torsion-free quotient $\mathbb H$ of $H$ and let
\[ \pi_{\mathbb H}:H_1\bigl(\Gap,\Z\bigr)\simeq\Gap^{\rm ab}\longrightarrow\mathbb H \]
be the quotient map. In subsequent sections we will specialize to $\mathbb H=H$, which represents the most relevant case for this article. However, in connection with \cite[Conjecture 2]{Gr}, other interesting instances arise for $\mathbb H=H_1(A,\Z)$ where $A_{/\Q}$ is a modular abelian variety (e.g., an elliptic curve) that is a $p$-new quotient of $J_0^D(pM)$.

Let $\mu^{\mathcal Y}_{\mathbb H}\in C^1\bigl(\G,\cF_0(\mathcal E,\mathbb H)\bigr)$ be the $1$-cochain defined, in terms of the universal $1$-cochain of Definition \ref{muniv}, by
\begin{equation} \label{mH}
\mu^{\mathcal Y}_{\mathbb H}(\g)(e):=\pi_{\mathbb H}\bigl(\mu^{\mathcal Y}_{\rm univ}(\g)(e)\bigr)
\end{equation}
for all $\g\in\Gamma$ and all $e\in\E$.
The following properties of $\mu^{\mathcal Y}_{\mathbb H}$, whose verification is easy but somewhat tedious, will be used repeatedly.

\begin{proposition} \label{3}
\begin{enumerate}
\item[(i)] The cochain $\mu^{\mathcal Y}_{\mathbb H}$ lies in $Z^1\bigl(\G,\cF_0(\mathcal E,\mathbb H)\bigr)$, i.e., it is a $1$-cocycle.
\item[(ii)] The class of $\mu^{\mathcal Y}_{\mathbb H}$ in $H^1\bigl(\G,\cF_0(\mathcal E,\mathbb H)\bigr)$ is independent of the choice of $\mathcal Y$.
\item[(iii)] If $\nu\in Z^1\bigl(\G,\cF_0(\mathcal E,\mathbb H)\bigr)$ is cohomologous to $\mu^{\mathcal Y}_{\mathbb H}$ then there exists a system of representatives $\mathcal Y'$ for $\Gap\backslash\Gamma$ such that $\nu=\mu^{\mathcal Y'}_{\mathbb H}$.
\end{enumerate}
\end{proposition}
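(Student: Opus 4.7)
The plan is to derive all three assertions from a single group-theoretic identity in $\Gap$ relating the elements $g_{\gamma,e}$ under composition, and then to reverse that identity for part (iii). First I would compute $\gamma_e\gamma_1\gamma_2$ in two ways using the defining relation $\gamma_e\gamma=g_{\gamma,e}\gamma_{\gamma^{-1}(e)}$, which yields
\[ g_{\gamma_1,e}\,g_{\gamma_2,\gamma_1^{-1}(e)}\,\gamma_{(\gamma_1\gamma_2)^{-1}(e)}\;=\;\gamma_e\gamma_1\gamma_2\;=\;g_{\gamma_1\gamma_2,e}\,\gamma_{(\gamma_1\gamma_2)^{-1}(e)}. \]
Uniqueness of the coset decomposition $\G=\coprod_{e\in\E^+}\Gap\gamma_e$ forces $g_{\gamma_1\gamma_2,e}=g_{\gamma_1,e}\cdot g_{\gamma_2,\gamma_1^{-1}(e)}$ in $\Gap$, and after abelianization and application of $\pi_{\mathbb H}$ this becomes an additive identity in $\mathbb H$.

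For (i), I would note that $\G\subset\GL_2^+(\Q_p)$ preserves the parity of edges, so the antisymmetric extension to $\E^-$ is consistent and the cochain indeed takes values in $\cF_0(\E,\mathbb H)$. Since $\mathbb H$ carries the trivial $\G$-action, the cocycle condition for the left action ${}^{\gamma}\nu(e):=\nu(\gamma^{-1}(e))$ on $\cF_0(\E,\mathbb H)$ reduces to $\mu(\gamma_1\gamma_2)(e)=\mu(\gamma_1)(e)+\mu(\gamma_2)(\gamma_1^{-1}(e))$, which is precisely the image in $\mathbb H$ of the identity above. For (ii), given a second system $\mathcal Y'=\{f(e)\gamma_e\}$ as in \eqref{f(e)-eq}, the same bookkeeping applied to $\gamma'_e\gamma=g'_{\gamma,e}\gamma'_{\gamma^{-1}(e)}$ yields $g_{\gamma,e}=f(e)^{-1}\,g'_{\gamma,e}\,f(\gamma^{-1}(e))$, so after passing to $\mathbb H$ the difference $\mu^{\mathcal Y'}_{\mathbb H}-\mu^{\mathcal Y}_{\mathbb H}$ equals $-d\xi$, where $\xi\in\cF_0(\E,\mathbb H)$ is the $0$-cochain defined on $\E^+$ by $\xi(e):=\pi_{\mathbb H}([f(e)])$ and extended antisymmetrically.

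For (iii) the plan is to reverse this construction. Given $\nu-\mu^{\mathcal Y}_{\mathbb H}=d\xi$ with $\xi\in\cF_0(\E,\mathbb H)$, I would look for a system $\mathcal Y'=\{f(e)\gamma_e\}$ whose associated $0$-cochain equals $-\xi$ on $\E^+$, so that (ii) forces $\mu^{\mathcal Y'}_{\mathbb H}=\nu$. The key input is that $\mathbb H$ is by construction a torsion-free quotient of $\Gap^{\rm ab}_T$, so the composition $\Gap\twoheadrightarrow\Gap^{\rm ab}\twoheadrightarrow\mathbb H$ is surjective and I may lift each $-\xi(e)$ to some $f(e)\in\Gap$. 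Since $f(e)\in\Gap$, the coset $\Gap\gamma'_e=\Gap\gamma_e$ is unchanged, so $\mathcal Y'$ remains a valid set of representatives for $\Gap\backslash\G$.

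The main delicate point across all three parts is not a deep difficulty but rather careful bookkeeping: tracking the action conventions (left vs.\ right, trivial action on $\mathbb H$), the sign in the coboundary formula of \cite[p.~59]{Br}, and the antisymmetric extension to $\E^-$. Once these are pinned down, (i) and (ii) are essentially transcription, while the content of (iii) is concentrated in the surjectivity of $\pi_{\mathbb H}$ at the level of the \emph{group} $\Gap$ rather than only of its abelianization, which is what permits a canonical rigidification of every cohomology class by a suitable choice of coset representatives.
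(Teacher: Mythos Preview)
Your proposal is correct and follows essentially the same approach as the paper's own proof, which is deliberately terse: the paper simply asserts that (i) ``follows straightly by unwinding the definition'', records for (ii) the coboundary identity $\mu^{\mathcal Y}_{\mathbb H}-\mu^{\mathcal Y'}_{\mathbb H}=\delta([f])$ with $[f](e):=\pi_{\mathbb H}([f(e)])$, and for (iii) lifts the splitting $0$-cochain along $\pi_{\mathbb H}$ exactly as you describe. Your explicit identity $g_{\gamma_1\gamma_2,e}=g_{\gamma_1,e}\,g_{\gamma_2,\gamma_1^{-1}(e)}$ and your observation that the surjectivity of $\Gap\twoheadrightarrow\mathbb H$ is the substantive content of (iii) are precisely the computations the paper leaves implicit.
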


We will denote the class of $\mu^{\mathcal Y}_{\mathbb H}$ in $H^1\bigl(\G,\cF_0(\mathcal E,\mathbb H)\bigr)$ by $\boldsymbol{\mu}^{\mathcal Y}_{\mathbb H}$; although, by part (ii) of the proposition above, this class is independent of the choice of a system of representatives, we keep the superscript $\mathcal Y$ in the notation because we reserve the unadorned symbol for a slightly different cohomology class (cf. Definition \ref{def-mu}).

\begin{proof} Part (i) follows straightly by unwinding the definition of $\mu^{\mathcal Y}_{\mathbb H}$. As for (ii), a direct computation reveals that if $\mathcal Y'$ is another system of representatives for $\Gap\backslash\Gamma$ then
\[ \mu^{\mathcal Y}_{\mathbb H}-\mu^{\mathcal Y'}_{\mathbb H}=\delta([f]), \]
the coboundary associated with the function $[f]:\E\rightarrow\mathbb H$ such that $[f](e):=\pi_{\mathbb H}\bigl([f(e)]\bigr)$ for $e\in\E^+$ and $[f](e):=-\pi_{\mathbb H}\bigl([f(\bar e)]\bigr)$ for $e\in\E^-$; here $f(e)$ is as in \eqref{f(e)-eq}. Finally, to prove claim (iii) let $g$ be a function in $\cF(\E^+,\mathbb H)=\cF_0(\E,\mathbb H)$ whose image under the cobounday map is $\mu^{\mathcal Y}_{\mathbb H}-\nu$, and let
\[ f':\E^+\longrightarrow\Gap \]
be an arbitrary lift of $g$; then it can be checked that $\nu=\mu^{\mathcal Y'}_{\mathbb H}$ for $\mathcal Y':=\bigl\{f'(e)\g_e\bigr\}_{e\in\E^+}$. \end{proof}

Now recall the map
\[ \varrho:H^1\bigl(\G,\HC(\mathbb H)\bigr)\longrightarrow H^1\bigl(\G,\cF_0(\E,\mathbb H)\bigr) \]
from \eqref{Grlong}, with $A=\mathbb H$.

\begin{lemma} \label{lemma-univ-I}
The class $\boldsymbol{\mu}^{\mathcal Y}_{\mathbb H}$ lies in $\mathrm{Im}(\varrho)$.
\end{lemma}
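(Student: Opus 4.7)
The plan is to show that the image of $\boldsymbol\mu^{\mathcal Y}_{\mathbb H}$ under the connecting map $\varphi_{s*}\colon H^1(\Gamma,\cF_0(\E,\mathbb H))\to H^1(\Gamma,\cF(\V,\mathbb H))$ induced by $\varphi_s$ is zero; by exactness of the sequence coming from Lemma \ref{GreenbergsLemma} this is equivalent to the claim that $\boldsymbol\mu^{\mathcal Y}_{\mathbb H}\in\mathrm{Im}(\varrho)$.

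First, I would apply Shapiro's lemma to each term on either side of $\varphi_s$. Since $\Gamma$ acts transitively on $\E^+$ with stabilizer $\Gamma_0^D(pM)=\mathrm{Stab}_\Gamma(e_\ast)$, and transitively on $\V^+$, $\V^-$ with stabilizers $\Gamma_0^D(M)$ and $\hat\Gamma_0^D(M)$ respectively, Shapiro's lemma identifies
\[
H^1\bigl(\Gamma,\cF_0(\E,\mathbb H)\bigr)\simeq H^1\bigl(\Gamma_0^D(pM),\mathbb H\bigr),\qquad H^1\bigl(\Gamma,\cF(\V,\mathbb H)\bigr)\simeq H^1\bigl(\Gamma_0^D(M),\mathbb H\bigr)\oplus H^1\bigl(\hat\Gamma_0^D(M),\mathbb H\bigr).
\]
Under these identifications $\varphi_{s*}$ becomes the pair of corestriction maps (the summation $\sum_{s(e)=v}$ that defines $\varphi_s$ is exactly the trace over cosets corresponding to the inclusions $\Gamma_0^D(pM)\hookrightarrow\Gamma_0^D(M)$ and $\Gamma_0^D(pM)\hookrightarrow\hat\Gamma_0^D(M)$).

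Second, I would identify the Shapiro-image of $\boldsymbol\mu^{\mathcal Y}_{\mathbb H}$ explicitly. Choosing $\gamma_{e_\ast}=1$ in the system $\mathcal Y$, Definition \ref{muniv} gives $g_{\gamma,e_\ast}=\gamma$ for every $\gamma\in\Gamma_0^D(pM)=\mathrm{Stab}_\Gamma(e_\ast)$, so $\mu^{\mathcal Y}_{\mathbb H}(\gamma)(e_\ast)=\pi_{\mathbb H}\bigl([\gamma]\bigr)$. Consequently the Shapiro correspondent of $\boldsymbol\mu^{\mathcal Y}_{\mathbb H}$ in $H^1(\Gamma_0^D(pM),\mathbb H)=\mathrm{Hom}(\Gamma_0^D(pM)^{\rm ab}_T,\mathbb H)$ is simply the canonical quotient $\pi_{\mathbb H}$ composed with the abelianization.

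Third, I would compute the two corestrictions. On $H^1$ with a trivial-action coefficient module, corestriction corresponds to precomposition with the transfer map $V\colon\Gamma_0^D(M)^{\rm ab}_T\to\Gamma_0^D(pM)^{\rm ab}_T$. Under the identification of the groups $\Gamma^{\rm ab}_T$ with singular homology $H_1(X_0^D(?),\Z)$ recalled in the \emph{Notation and conventions}, the transfer $V$ is precisely $\pi_1^*$; similarly, the transfer associated with the inclusion $\Gamma_0^D(pM)\subset\hat\Gamma_0^D(M)=\omega_p\Gamma_0^D(M)\omega_p^{-1}$ translates, after conjugation by $\omega_p$, to $\pi_2^*$. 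Hence the two components of $\varphi_{s*}(\boldsymbol\mu^{\mathcal Y}_{\mathbb H})$ are represented by $\pi_{\mathbb H}\circ\pi_1^*$ and $\pi_{\mathbb H}\circ\pi_2^*$ respectively. Since $\mathbb H$ is a torsion-free quotient of $H=\bigl[H_1(X_0^D(pM),\Z)_T/\mathrm{Im}(\pi^*)\bigr]_T$, both of these compositions vanish, so $\varphi_{s*}(\boldsymbol\mu^{\mathcal Y}_{\mathbb H})=0$ and the lemma follows.

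The main obstacle I anticipate is the careful bookkeeping in Step 3: checking that the abstract transfer map of group cohomology matches the geometric pullback $\pi_i^*$ on singular homology under the chosen identifications, and handling the twist by $\omega_p$ that intervenes in the degeneracy map $\pi_2$. The rest of the argument is essentially formal once the Shapiro picture is set up correctly.
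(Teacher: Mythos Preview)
Your proposal is correct and follows essentially the same approach as the paper: apply Shapiro's lemma to identify the relevant $H^1$-groups, recognize the Shapiro-image of $\boldsymbol\mu^{\mathcal Y}_{\mathbb H}$ as the map $\pi_{\mathbb H}$, identify $\varphi_{s*}$ with the pair of corestrictions, and conclude by noting that these corestrictions correspond to $\pi_1^*$ and $\pi_2^*$, which are killed by $\pi_{\mathbb H}$. The paper's proof proceeds in exactly this way, and your anticipated ``obstacle'' in Step~3 is precisely the identification $(\mathrm{cor},\mathrm{cor})=(\pi_1^*,\pi_2^*)$ that the paper invokes without further comment.
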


\begin{proof} By \eqref{Grlong} and Shapiro's lemma, there are exact sequences fitting in the commutative diagram
\[ \footnotesize{\xymatrix{\dots\ar[r]& H^1\bigl(\G,\HC(\mathbb H)\bigr)\ar[r]^-{\varrho}\ar[d]^-\simeq & H^1\bigl(\G,\cF_0(\E,\mathbb H)\bigr)\ar[r]\ar[d]^-\simeq&H^1\bigl(\G,\cF(\mathcal V,\mathbb H)\bigr)\ar[r]\ar[d]^-\simeq&\dots\\
                 \dots\ar[r]& H^1\bigl(\Gap,\mathbb H\bigr)_{p{\text{-new}}}\ar[r]& H^1\bigl(\Gap,\mathbb H\bigr)\ar[r]& H^1\bigl(\Ga,\mathbb H\bigr)\times H^1\bigl(\hat{\Gamma}_0^D(M),\mathbb H\bigr)\ar[r] & \dots}} \]
Let $\mathcal Y$ be any system of representatives for $\Gap\backslash\Gamma$. The class in $H^1\bigl(\Gap,\mathbb H\bigr)$ corresponding to $\boldsymbol{\mu}^{\mathcal Y}_{\mathbb H}$ under the above isomorphism can be represented by the cochain
\[ g\in\Gap\longmapsto\mu^{\mathcal Y}_{\mathbb H}(g)(e_\ast)\in\mathbb H \]
which, according to Definition \ref{muniv}, is equal to $\pi_{\mathbb H}\bigl([g]\bigr)$. If $G\in\bigl\{\Gamma_0^D(M),\Gamma_0^D(pM),\hat\Gamma_0(M)\bigr\}$ then $G$ acts trivially on $\mathbb H$, so there is a canonical isomorphism
\[ H^1(G,\mathbb H)\simeq\Hom\bigl(H_1(G,\Z),\mathbb H\bigr). \]
Under these identifications, the map in the lower right corner of the above diagram is
\[ \begin{array}{ccc}
   H^1\bigl(\Gap,\mathbb H\bigr) & \longrightarrow & H^1\bigl(\Ga,\mathbb H\bigr)\times H^1\bigl(\hat{\Gamma}_0^D(M),\mathbb H\bigr)\\[2mm]
   f & \longmapsto & f\circ\Big(\mathrm{cor}_{\Gap}^{\Ga},\mathrm{cor}_{\Gap}^{\hat{\Gamma}_0^D(M)}\Big)
   \end{array} \]
with $\mathrm{cor}$ indicating corestriction. Now observe that for $\mathbb H=H$ there is an equality of maps
\[ \Big(\mathrm{cor}_{\Gap}^{\Ga},\mathrm{cor}_{\Gap}^{\hat{\Gamma}_0^D(M)}\Big)=(\pi_1^*,\pi_2^*) \]
where the $\pi_i^\ast$ for $i=1,2$ are the pull-backs defined in the introduction. Since $\mathbb H$ is a quotient of
\[ H:=\bigl[H_1\bigl(\Gap,\Z\bigr)_T\big/\mathrm{Im}(\pi_1^*)+\mathrm{Im}(\pi_2^*)\bigr]_T, \]
we deduce that the image of $\boldsymbol{\mu}^{\mathcal Y}_{\mathbb H}$ in $H^1\bigl(\Ga,\mathbb H\bigr)\times H^1\bigl(\hat{\Gamma}_0^D(M),\mathbb H\bigr)$ is trivial, and the lemma is proved. \end{proof}

\begin{remarkwr} \label{cohomologous-remark}
Some words of caution are in order here: Lemma \ref{lemma-univ-I} does \emph{not} prove that the cocycle $\mu^{\mathcal Y}_{\mathbb H}$ lies in $Z^1\bigl(\G,\HC(\mathbb H)\bigr)$. Rather, it only shows that some cocycle cohomologous to it takes values in $\HC(\mathbb H)$. However, by part (iii) of Proposition \ref{3} this implies that there do exist choices of $\mathcal Y$ such that $\mu^{\mathcal Y}_{\mathbb H}$ belongs to $Z^1\bigl(\G,\HC(\mathbb H)\bigr)$.
\end{remarkwr}

The last observation in Remark \ref{cohomologous-remark} motivates the following

\begin{definition} \label{harmf}
A system of representatives $\mathcal Y$ for $\Gap\backslash\Gamma$ is said to be \emph{harmonic} if $\mu^{\mathcal Y}_{\mathbb H}$ belongs to $Z^1\bigl(\G,\HC(\mathbb H)\bigr)$.
\end{definition}

Let us introduce a class of systems of representatives for the cosets $\Gap\backslash\Gamma$ which can be explicitly constructed and shown to be harmonic. This construction will be useful in \S \ref{SHecke} but may be also of independent interest, as it is amenable to explicit calculations: building on the computational tools developed in \cite{GV}, our recipe can be implemented in order to compute the lattice of $p$-adic periods that we introduce in Section \ref{periods}.

\begin{definition} \label{radial}
A system of representatives $\mathcal Y=\{\g_e\}_{e\in\E^+}$ for $\Gap\backslash\Gamma$ is called \emph{radial} if the two conditions
\begin{enumerate}
\item $\{\g_e\}_{s(e)=v}=\{\g_i\g_v\}_{i=0}^p$ for all $v\in\V^+$,
\item $\{\g_e\}_{t(e)=v}=\{\tilde{\g}_i\g_v\}_{i=0}^p$ for all $v\in\V^-$
\end{enumerate}
hold for suitable choices of sets of representatives $\{\g_i\}_{i=0}^p$, $\{\tilde{\g}_i\}_{i=0}^p$, $\{\g_v\}_{v\in\V^+}$ and $\{\g_v\}_{v\in\V^-}$ for the cosets $\Gap\backslash\Ga$, $\Gap\backslash\hat{\Gamma}_0^D(M)$, $\Ga\backslash\G$ and $\hat{\Gamma}_0^D(M)\backslash\G$, respectively, such that $\g_0=\tilde{\g}_0=\g_{v_\ast}=\g_{\hat{v}_\ast}=1$.
\end{definition}

The next result justifies the formal introduction of the notion of radial systems.

\begin{proposition} \label{radial-lemma}
Radial systems of representatives exist and are harmonic.
\end{proposition}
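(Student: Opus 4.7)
The plan is to establish existence by an inductive construction on the Bruhat--Tits tree and to prove harmonicity by recognizing the relevant vertex-sums as transfers from $\Ga$ or $\hat\Gamma_0^D(M)$ down to $\Gap$, whose images are annihilated by the projection to $H$ and hence to $\mathbb H$.

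For existence I would fix at the outset coset representatives $\{\g_i\}_{i=0}^p$ for $\Gap\backslash\Ga$ and $\{\tilde\g_i\}_{i=0}^p$ for $\Gap\backslash\hat\Gamma_0^D(M)$ with $\g_0=\tilde\g_0=1$, and set $\g_{v_\ast}=\g_{\hat v_\ast}=\g_{e_\ast}=1$. Proceeding outward by distance from $v_\ast$, at an even vertex $v\in\V^+$ for which $\g_v$ has already been defined, the $p+1$ edges with source $v$ are enumerated as $\g_v^{-1}\g_i^{-1}(e_\ast)$ for $i=0,\ldots,p$, and I would assign $\g_e:=\g_i\g_v$ to the $i$th such edge and $\g_{v'}:=\g_i\g_v$ to its new odd target $v'$ whenever $i\ne 0$. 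The symmetric recipe using the $\tilde\g_i$ handles odd vertices. Since $\G\subset\GL_2^+(\Q_p)$ preserves the bipartition of $\V$ and since $e_\ast$ is a fundamental domain for $\G$ on $\cl T$ (cf.~\eqref{amalgam}), induction covers every vertex and every edge exactly once; the back-edge at each newly created vertex carries index $0$, so the incident edges at each vertex are labelled bijectively by $\{0,\ldots,p\}$ and both conditions of Definition \ref{radial} are realised by construction.

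For harmonicity, fix $\g\in\G$ and $v\in\V^+$; the relation $\mu^{\mathcal Y}_{\mathbb H}(\g)(\bar e)=-\mu^{\mathcal Y}_{\mathbb H}(\g)(e)$ reduces the task to showing $\sum_{s(e)=v}\mu^{\mathcal Y}_{\mathbb H}(\g)(e)=0$ in $\mathbb H$. Let $v':=\g^{-1}(v)\in\V^+$. Condition (1) furnishes bijections $i,j$ from the edges with source $v$ onto $\{0,\ldots,p\}$ with $\g_e=\g_{i(e)}\g_v$ and $\g_{\g^{-1}(e)}=\g_{j(e)}\g_{v'}$, and a direct manipulation gives
\[ g_{\g,e}=\g_{i(e)}\,\alpha\,\g_{\sigma(i(e))}^{-1},\qquad \alpha:=\g_v\g\g_{v'}^{-1}\in\Ga, \]
where $\sigma:=j\circ i^{-1}$ is precisely the permutation of cosets $\Gap\backslash\Ga$ induced by right multiplication by $\alpha$. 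Consequently
\[ \sum_{s(e)=v}[g_{\g,e}]=\sum_{i=0}^{p}[\g_i\,\alpha\,\g_{\sigma(i)}^{-1}] \]
is the transfer $\pi_1^\ast([\alpha])\in H_1(\Gap,\Z)_T$, whose image in $H$, and hence in $\mathbb H$, is zero. The case $v\in\V^-$ is entirely parallel: condition (2) identifies the analogous sum with $\pi_2^\ast([\beta])$ for $\beta:=\g_v\g\g_{v'}^{-1}\in\hat\Gamma_0^D(M)$, which again lies in $\mathrm{Im}(\pi^\ast)$. This shows $\mu^{\mathcal Y}_{\mathbb H}(\g)\in\HC(\mathbb H)$ for every $\g\in\G$, proving that $\mathcal Y$ is harmonic.

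The principal obstacle is bookkeeping: one has to verify that the permutation $\sigma$ appearing above coincides with the one governing the coset-theoretic transfer formula, so that the displayed sum is legitimately $\pi_1^\ast([\alpha])$. This identification boils down to the equality $\g_{j(e)}^{-1}(e_\ast)=\alpha^{-1}\g_{i(e)}^{-1}(e_\ast)$, obtained by applying $\g_{v'}\g^{-1}=\alpha^{-1}\g_v$ to $e$ and comparing with the radial description of the $\g_e$. Once this is in hand, harmonicity becomes nothing more than a restatement of the defining property of $H$ as the torsion-free quotient annihilating $\mathrm{Im}(\pi^\ast)$; without radiality, one could at best invoke Lemma~\ref{lemma-univ-I} to find \emph{some} cohomologous harmonic cocycle, whereas the radial conditions deliver harmonicity on the nose.
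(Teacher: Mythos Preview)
Your argument is correct and rests on the same key identification as the paper's proof: the vertex-sum $\sum_{s(e)=v}[g_{\gamma,e}]$ is a transfer and hence lies in $\mathrm{Im}(\pi^\ast)$. The organization differs slightly. You compute directly at an arbitrary vertex $v$ and arbitrary $\gamma\in\Gamma$, obtaining $\pi_1^\ast([\alpha])$ (resp.\ the $\hat\Gamma_0^D(M)$-transfer) with $\alpha=\gamma_v\gamma\gamma_{v'}^{-1}$ in one stroke. The paper instead establishes two base cases---vanishing at $v_\ast$ for $\gamma\in\Ga$ (the transfer argument) and vanishing at $v_\ast$ for $\gamma=\gamma_v$ (a direct consequence of the fact that the $\gamma_i\gamma_v$ all belong to $\mathcal Y$, so the corresponding $g_{\gamma,e_\ast}$ are trivial)---and then observes that the cocycle identity for $\nu:=\varphi_s(\mu_{\mathbb H}^{\mathcal Y})$ reduces the general case to these. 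Your single computation is more direct; the paper's decomposition isolates the two roles of radiality (producing a transfer at the base vertex, and making the ``translation'' cocycles $\mu_{\gamma_v}$ vanish at $e_\ast$) before combining them.
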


\begin{proof} The existence of radial systems follows from the fact that $\mathcal T$ is a tree. More precisely, for any choice of sets of representatives $\{\g_i\}_{i=0}^p$ and $\{\tilde{\g}_i\}_{i=0}^p$ of $\Gap\backslash\Ga$ and $\Gap\backslash\hat{\Gamma}_0^D(M)$, respectively, with $\g_0=\tilde{\g}_0=1$ conditions $(1)$ and $(2)$ in Definition \ref{radial} uniquely determine sets $\{\g_v\}_{v\in\V^+}$ and $\{\g_v\}_{v\in\V^-}$ satisfying them.

Let us now prove that radial systems are harmonic. According to Lemma \ref{GreenbergsLemma}, we need to show that, with slightly abusive but self-explaining notation, $\nu:=\varphi_s\bigl(\mu^{\mathcal Y}_{\mathbb H}\bigr)\in Z^1\bigl(\G,\cF(\V,\mathbb H)\bigr)$ is identically zero. Firstly, notice that
\begin{equation} \label{u}
\nu_{\g}(v_\ast)=0\quad\text{for all $\g\in\Ga$},\qquad\nu_{\hat\g}(\hat{v}_\ast)=0\quad\text{for all $\hat\g\in\hat{\Gamma}_0^D(M)$}.
\end{equation}
Indeed, once again with a slight abuse of notation, for $\g\in\Ga$ one has
\[ \nu_\g(v_\ast)=\sum_{s(e)=v_\ast}\bigl[g_{\g,e}\bigr]=\Big[\mathrm{cor}_{\Ga}^{\Gap}([\g])\Big]=\pi_1^*([\g])\in\pi_1^*\bigl(H_1\bigl(X^D_0(M),\Z\bigr)\bigr), \]
hence the image of $\nu_{\g}(v_\ast)$ in $\mathbb H$ vanishes. Similar considerations apply to elements $\hat\g$ in $\hat{\Gamma}_0^D(M)$.

Secondly, one has
\begin{equation} \label{dos}
\nu_{\g_v}(v_\ast)=0\qquad\text{for all $v\in\V^+$},\qquad\nu_{\g_v}(\hat{v}_\ast)=0\quad\text{for all $v\in\V^-$}.
\end{equation}
In fact, with notation as in Definition \ref{radial}, for $v\in\V^+$ there are equalities
\[ \nu_{\g_v}(v_\ast)=\sum_{i=0}^p\mu^{\mathcal Y}_{\mathbb H,\g_v}(\g_i^{-1}e_\ast)=\sum_i{}^{\g_i}\mu^{\mathcal Y}_{\mathbb H,\g_v}(e_\ast)=\sum_i\bigl(\mu^{\mathcal Y}_{\mathbb H,\g_i \g_v}-\mu^{\mathcal Y}_{\mathbb H,\g_i}\bigr)(e_\ast), \]
and this vanishes in $\mathbb H$ because the $\g_i$ and the $\g_i\g_v$ belong to $\mathcal Y$ by definition of $\mu_{\mathbb H}^{\mathcal Y}$.

Similarly, if $v\in\V^-$ then
\[ \nu_{\g_v}(\hat{v}_\ast)=\sum_{i=0}^p\mu^{\mathcal Y}_{\mathbb H,\g_v}(\tilde{\g}_i^{-1}\bar{e}_\ast)=\sum_i{}^{\tilde{\g}_i}\mu^{\mathcal Y}_{\mathbb H,\g_v}(\bar{e}_\ast)= \sum_i\bigl(\mu^{\mathcal Y}_{\mathbb H,\tilde{\g}_i\g_v}-\mu^{\mathcal Y}_{\mathbb H,\tilde{\g}_i}\bigr)(\bar{e}_\ast), \]
which is again trivial because the $\tilde{\g}_i$ and the $\tilde{\g}_i\g_v$ are in $\mathcal Y$.

This is enough to prove the lemma, as one can check that $\nu$ is uniquely determined by conditions (\ref{u}) and (\ref{dos}). \end{proof}

Let $\mathcal Y$ be an arbitrary harmonic system. Before proceeding with our arguments, we make an observation which will prove useful later.

\begin{remarkwr} \label{ker-varrho-remark}
The analogue of part (ii) of Proposition \ref{3} for $\mu_{\mathbb H}^{\mathcal Y}$ does \emph{not} hold true in $H^1\bigl(\G,\HC(\mathbb H)\bigr)$. Indeed, there exist several choices of harmonic systems $\mathcal Y'$ such that the classes of $\mu_{\mathbb H}^{\mathcal Y}$ and $\mu_{\mathbb H}^{\mathcal Y'}$ in $H^1\bigl(\G,\HC(\mathbb H)\bigr)$ are different; this is due to the fact that $\ker(\varrho)$ is not trivial. More precisely, it is immediate to check from \eqref{Grlong} that
\[ \ker(\varrho)=(\mathbb H\times\mathbb H)/\mathbb H_0 \]
where $\mathbb H_0$ is the image of $\mathbb H$ in $\mathbb H\times\mathbb H$ under the embedding $a\mapsto\bigl((p+1)a,-(p+1)a\bigr)$.
\end{remarkwr}

Fix once and for all, for the rest of this article, a prime $r\nmid pDM$ and set
\[ t_r:=T_r-r-1, \]
which we regard as an operator in either $\mathcal H(M)$, $\mathcal H(p M)$ or $\mathcal H(p,M)$ according to the context.

\begin{definition} \label{def-mu}
The class $\boldsymbol{\mu}_{\mathbb H}$ is the image of $t_r\cdot\mu^{\mathcal Y}_{\mathbb H}$ in $H^1\bigl(\G,\HC(\mathbb H)\bigr)$.
\end{definition}

Dropping $\mathcal Y$ from the notation is justified by the following

\begin{lemma} \label{Eis}
The class $\boldsymbol{\mu}_{\mathbb H}$ is independent of the choice of $\mathcal Y$.
\end{lemma}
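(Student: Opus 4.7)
The plan is to exploit Proposition \ref{3}(ii), which already yields independence of $\mathcal Y$ at the level of $H^1\bigl(\G,\cF_0(\mathcal E,\mathbb H)\bigr)$, and to show that $t_r$ annihilates the potential ambiguity picked up when lifting classes from there to $H^1\bigl(\G,\HC(\mathbb H)\bigr)$, namely the subgroup $\ker(\varrho)$ identified in Remark \ref{ker-varrho-remark}.

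First I would reduce to comparing two harmonic systems $\mathcal Y$ and $\mathcal Y'$ in the sense of Definition \ref{harmf}: harmonic systems exist by Proposition \ref{radial-lemma}, and for a non-harmonic $\mathcal Y$ one may invoke Proposition \ref{3}(iii) to replace $\mu^{\mathcal Y}_{\mathbb H}$ by a cohomologous cocycle of the form $\mu^{\mathcal Y'}_{\mathbb H}$ with $\mathcal Y'$ harmonic. For harmonic $\mathcal Y,\mathcal Y'$ the cocycles $\mu^{\mathcal Y}_{\mathbb H},\mu^{\mathcal Y'}_{\mathbb H}$ both lie in $Z^1\bigl(\G,\HC(\mathbb H)\bigr)$, and Proposition \ref{3}(ii) ensures that their classes agree in $H^1\bigl(\G,\cF_0(\mathcal E,\mathbb H)\bigr)$. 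Consequently the difference of their classes in $H^1\bigl(\G,\HC(\mathbb H)\bigr)$ belongs to $\ker(\varrho)$, which is precisely the image of the connecting homomorphism $\partial\colon H^0\bigl(\G,\cF(\mathcal V,\mathbb H)\bigr)\to H^1\bigl(\G,\HC(\mathbb H)\bigr)$ attached to the short exact sequence of Lemma \ref{GreenbergsLemma}.

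Next, since that short exact sequence is Hecke-equivariant for the pair $(\G,S_{(p,M)})$---both $\HC(\mathbb H)\hookrightarrow\cF_0(\mathcal E,\mathbb H)$ and $\varphi_s$ intertwine the $\G$-action and hence commute with the formal Hecke operators of \S\ref{section-formal-Hecke}---the map $\partial$ is itself Hecke-equivariant, and it will suffice to show that $t_r$ kills $H^0\bigl(\G,\cF(\mathcal V,\mathbb H)\bigr)$. Since $\G$ acts trivially on $\mathbb H$, this invariants module identifies with $\cF(\G\backslash\mathcal V,\mathbb H)\simeq\mathbb H\times\mathbb H$ via evaluation at $v_\ast$ and $\hat v_\ast$. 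Writing $\G g_0\G=\coprod_{i=0}^{r}g_i\G$ for the coset decomposition defining $T_r$ in $\cl H(p,M)$, the formula from \S\ref{section-formal-Hecke} yields
\[
T_r f(v)=\sum_{i=0}^{r} g_i\cdot f(g_i^{-1}v)=\sum_{i=0}^{r} f(g_i^{-1}v)
\]
for any $f\in\cF(\mathcal V,\mathbb H)^\G$. Each $g_i$ has reduced norm $r$ coprime to $p$, so $\ord_p(\det\iota_p(g_i))=0$, and hence $g_i\in\GL_2^+(\Q_p)$ preserves the parity of vertices of $\cl T$; in particular $g_i^{-1}v_\ast\in\mathcal V^+=\G\cdot v_\ast$, so $f(g_i^{-1}v_\ast)=f(v_\ast)$, and the analogous computation works at $\hat v_\ast$. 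Thus $T_r$ acts as multiplication by $r+1$ on $H^0\bigl(\G,\cF(\mathcal V,\mathbb H)\bigr)$, so $t_r=T_r-(r+1)$ acts as zero there, and a fortiori on $\ker(\varrho)$.

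The main point demanding some care will be the Hecke-equivariance of $\partial$, which ultimately rests on the compatibility of the coset representatives for $T_r$ chosen across the Hecke algebras $\cl H(p,M)$ and $\cl H(pM)$---a matching already coordinated in \S\ref{HZ}---together with the elementary parity-preservation assertion above, which exploits nothing more than $\gcd(r,p)=1$.
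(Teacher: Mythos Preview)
Your proof is correct and follows essentially the same strategy as the paper: both arguments reduce to showing that $t_r$ annihilates $\ker(\varrho)$, identify $\ker(\varrho)$ with the image of $H^0\bigl(\G,\cF(\mathcal V,\mathbb H)\bigr)$ under the connecting homomorphism, invoke Hecke-equivariance of the long exact sequence, and verify that $T_r$ acts as $r+1$ on the invariants. The only cosmetic difference is in this last verification: the paper observes directly that the coset representatives $s_i\in R(pM)$ (being in the local Eichler order at $p$) fix both $v_\ast$ and $\hat v_\ast$, whereas you argue via parity preservation that $g_i^{-1}v_\ast$ is merely $\G$-equivalent to $v_\ast$ and then use the $\G$-invariance of $f$---a slightly less direct but equally valid route.
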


\begin{proof} It suffices to show that $t_r$ vanishes on the kernel of $\varrho$, i.e., that $\ker(\varrho)$ is an Eisenstein submodule of $H^1\bigl(\G,\HC(\mathbb H)\bigr)$. If this is true then $t_r$ also acts on $\mathrm{Im}(\varrho)\hookrightarrow H^1\bigl(\G,\cF_0(\E,\mathbb H)\bigr)$, and the lemma follows from part (ii) of Proposition \ref{3}.

As pointed out in Remark \ref{ker-varrho-remark}, $\ker(\varrho)$ is equal to the image of $\mathbb H\times\mathbb H=H^0\bigl(\G,\cF(\V,\mathbb H)\bigr)$ in $H^1\bigl(\G,\HC(\mathbb H)\bigr)$. Since Hecke operators commute with the connecting maps of the long exact sequence \eqref{Grlong} by \cite[Lemma 1.1.1]{AS}, it is enough to show that $H^0\bigl(\G,\cF(\V,\mathbb H)\bigr)$ is Eisenstein.

Let $f\in H^0\bigl(\G,\cF(\V,\mathbb H)\bigr)$. According to Section \ref{section-Hecke}, $T_r(f)=\sum_{i=0}^{r+1}s_i\cdot f$ where the $s_i\in R(pM)$ are elements of norm $r$. Since the elements in $R(pM)$ fix both $v_\ast$ and $\hat{v}_\ast$, it follows that
\[ T_r(f)(v_\ast)=(r+1)f(v_\ast),\qquad T_r(f)(\hat{v}_\ast)=(r+1)f(\hat{v}_\ast). \]
Since $T_r(f)$ is again $\G$-invariant, it is completely determined by these two values. Hence $T_r(f)=(r+1)f$, and we are done. \end{proof}

In light of isomorphism \eqref{*}, we shall denote by $\boldsymbol{\mu}_{\mathbb H}$ also the measure-valued cohomology class in $H^1\bigl(\G,\cM_0(\mathbb H)\bigr)$ associated with $\boldsymbol{\mu}_{\mathbb H}$. In the special case where $\mathbb H=H$, we denote $\boldsymbol{\mu}_H$ simply by $\boldsymbol{\mu}$.

\section{Multiplicative integration pairings} \label{raav}

\subsection{An integration pairing for Shimura curves} \label{ShimuraPairing}

As in \S \ref{sec-def-meas}, let $\mathbb H$ be a non-zero torsion-free quotient of $H$, which now we further assume to be stable for the action of $\mathcal H(pM)$. This holds for all the cases we are interested in, like $\mathbb H=H$ or $\mathbb H=H_1(A,\Z)$ where $A_{/\Q}$ is a modular abelian variety that is a $p$-new quotient of $J_0^D(pM)$.

The aim of this section is to introduce a suitable analogue of the integration pairing defined by Dasgupta in \cite[\S 3.2]{Das}. Notice though that when $D>1$ there is no natural action of $\Gamma$ on $\Div\PP^1(\Q)$ and consequently Dasgupta's pairing makes no sense. Instead, following ideas of
Greenberg (\cite{Gr}), we shall construct a pairing
\[ \langle\,,\rangle:H_1(\Gamma,\cD)\times H^1\bigl(\Gamma,\cM_0(\mathbb H)\bigr)\longrightarrow\C_p^\times\otimes\mathbb H \]
where, for notational convenience, from here on we set
\[ \cD:=\Div^0\cl H_p. \]
Notice that if $\mathbb H=H$ then $\C_p^\times\otimes\mathbb H=T(\C_p)$. Let $\cC\bigl(\PP^1(\Q_p),\C_p\bigr)$ denote the $\C_p$-algebra of $\C_p$-valued continuous functions on $\PP^1(\Q_p)$; since it is naturally a submodule of $\cF\bigl(\PP^1(\Q_p),\C_p\bigr)$, it inherits a left action of $\GL_2(\Q_p)$. The multiplicative group $\cC\bigl(\PP^1(\Q_p),\C_p\bigr)^\times$ of invertible elements of $\cC\bigl(\PP^1(\Q_p),\C_p\bigr)$ consists of the $\C_p^\times$-valued functions in $\cC\bigl(\PP^1(\Q_p),\C_p\bigr)$. As in \cite[Definition 2.2]{Das}, given a function $f\in\cC\bigl(\PP^1(\Q_p),\C_p\bigr)^\times$ and a measure $\nu\in \cM_0(\mathbb H)$ we define the \emph{multiplicative integral} of $f$ against $\nu $ as a limit of Riemann products
\[ \mint_{\PP^1(\Q_p)}fd\nu:=\lim_{\|\mathcal U\|\rightarrow0}\prod_{U\in\mathcal U}f(t_U)\otimes\nu(U)\in\C_p^\times\otimes\mathbb H. \]
In the above formula the limit is taken over finer and finer covers $\mathcal U$ of $\PP^1(\Q_p)$ by compact open disjoint subsets, and $t_U$ is an arbitrary point of $U$ for every $U\in \mathcal U$. The limit converges in $\C_p^\times\otimes\mathbb H$ because $\nu$ is a measure. This produces a pairing
\begin{equation} \label{pairing-1}
(\,,):\cC\bigl(\PP^1(\Q_p),\C_p\bigr)^\times\times \cM_0(\mathbb H)\longrightarrow\C_p^\times\otimes\mathbb H.
\end{equation}
One can easily verify that the pairing \eqref{pairing-1} satisfies
\[ (\gamma\cdot f,\gamma\cdot\nu)=(f,\nu) \]
for all $\g\in\GL_2(\Q_p)$, $f\in \cC\bigl(\PP^1(\Q_p),\C_p\bigr)^\times$ and $\nu\in\cM_0(\mathbb H)$. Since the multiplicative integral of a non-zero constant against a measure $\nu\in\cM_0(\mathbb H)$ is $1$, the above pairing induces another pairing
\begin{equation}\label{pairing-2}
(\,,):\cC\bigl(\PP^1(\Q_p),\C_p\bigr)^\times\big/\C_p^\times\times\cM_0(\mathbb H)\longrightarrow\C_p^\times\otimes\mathbb H.
\end{equation}
For any $d\in\cD$ let $f_d$ denote a rational function on $\PP^1(\C_p)$ such that $\mathrm{div}(f_d)=d$. The function $f_d$ is not unique; more precisely, it is well defined only modulo multiplication by constant non-zero functions. Since the divisor $d$ is not supported on $\PP^1(\Q_p)$, the function $f_d$ restricts to a function in $\cC\bigl(\PP^1(\Q_p),\C_p\bigr)^\times$, which will be denoted in the same fashion by an abuse of notation. Thus the map $d\mapsto f_d$ defines an embedding
\[ \cD\;\longmono\;\cC\bigl(\PP^1(\Q_p),\C_p\bigr)^\times\big/\C_p^\times \]
which is invariant under the natural left actions of $\GL_2(\Q_p)$. Hence, composing this injection with \eqref{pairing-2} yields a $\GL_2(\Q_p)$-invariant pairing (denoted, by a slight abuse of notation, by the same symbol)
\begin{equation} \label{pairing-3}
(\,,):\cD\times\cM_0(\mathbb H)\longrightarrow\C_p^\times\otimes\mathbb H,\qquad(d,\mu):=\mint_{\PP^1(\Q_p)}f_d\,d\mu
\end{equation}
which, by construction, factors naturally through $(\cD\otimes\cM_0(\mathbb H))_\Gamma$. By cap product, we finally obtain the desired pairing
\begin{equation} \label{pairing}
\langle\,,\rangle:H_1(\Gamma,\cD)\times H^1\bigl(\Gamma,\cM_0(\mathbb H)\bigr)\longrightarrow\C_p^\times\otimes\mathbb H.
\end{equation}

\subsection{Hecke-equivariance of the integration map}\label{SHecke}

Recall from above that $\cD:=\Div^0\mathcal H_p$ and let also $\boldsymbol{\mu}_{\mathbb H}$ be as in \S \ref{sec-def-meas}. Fixing $\boldsymbol{\mu}_{\mathbb H}$ in the second variable of the pairing $\langle\,,\rangle$ of \eqref{pairing} yields a homomorphism
\begin{equation} \label{integral}
\int:H_1(\Gamma,\cD)\longrightarrow \C_p^{\times }\otimes \mathbb H.
\end{equation}
The group $H_1(\Gamma,\cD)$ is an $\mathcal H(p,M)$-module, while $\C_p^{\times }\otimes \mathbb H$ is naturally an $\mathcal H(pM)$-module, because of our assumptions on $\mathbb H$. Our present aim is to prove the following

\begin{proposition} \label{HeckeEquiv}
The integration map $\int$ is equivariant for the actions of the Atkin--Lehner involutions $W_p$ and $W_\infty$ and of the Hecke operators $T_\ell$ with $\ell\nmid pDM$.
\end{proposition}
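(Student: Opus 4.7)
The plan is to begin with the general cup-product identity \eqref{Heckepairing}: for any $T\in\mathcal{H}(p,M)$ and any $c\in H_1(\Gamma,\cD)$ one has
\[
\int(T\cdot c)\;=\;[T\cdot c,\boldsymbol{\mu}_{\mathbb H}]\;=\;[c,T\cdot \boldsymbol{\mu}_{\mathbb H}]
\]
in $\C_p^\times\otimes\mathbb{H}$. So the proposition reduces to showing that, for each of the operators $T=T_\ell$ (with $\ell\nmid pDM$), $T=W_p$ and $T=W_\infty$, the class $T\cdot\boldsymbol{\mu}_{\mathbb H}\in H^1(\Gamma,\cM_0(\mathbb H))$ produced by the $\mathcal{H}(p,M)$-action coincides with the class obtained by post-composing $\boldsymbol{\mu}_{\mathbb H}$ with the endomorphism of $\mathbb H$ induced by the corresponding operator in $\mathcal{H}(pM)$. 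Since the Hecke algebras are commutative, the Eisenstein twist by $t_r=T_r-r-1$ in the definition of $\boldsymbol{\mu}_{\mathbb H}$ can be dropped, and it is enough to establish the analogous compatibility for the class of $\mu^{\mathcal Y}_{\mathbb H}$ in $H^1(\Gamma,\cF_0(\E,\mathbb H))$.

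Next, I would transport the question to $\Gap$-cohomology via Shapiro's lemma as in the proof of Lemma \ref{lemma-univ-I}: the class $\boldsymbol{\mu}^{\mathcal Y}_{\mathbb H}$ corresponds to the tautological cocycle $\xi\colon g\mapsto\pi_{\mathbb H}([g])$ in $H^1(\Gap,\mathbb H)$, and post-composition of $\xi$ by the Hecke endomorphism of $\mathbb H$ coming from $T\in\mathcal{H}(pM)$ identifies, thanks to the Hecke-equivariance of $\pi_{\mathbb H}$, with the cocycle $\xi\circ T^{\rm ab}$, where $T^{\rm ab}$ is the Hecke endomorphism of $\Gap^{\rm ab}$. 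The crucial input, highlighted at the end of Section \ref{section-Hecke}, is that the double coset decompositions defining $T_\ell$, $W_p$ and $W_\infty$ in the algebra $\mathcal{H}(p,M)$ use exactly the same representatives as the corresponding decompositions in $\mathcal{H}(pM)$: the elements $g_i(\ell)\in R(pM)$ for $T_\ell$, and the single elements $\om_p$ and $\om_\infty$ (which normalize $\Gap$) for the involutions.

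With this matching of representatives in hand, the final step is a direct computation on the universal cochain $\mu^{\mathcal Y}_{\rm univ}$ of Definition \ref{muniv}, using the recipe $(T\cdot f)(g)(e)=\sum_i s_i\cdot f(t_i(g))(e)=\sum_i f(t_i(g))(s_i^{-1}e)$ from \S \ref{section-formal-Hecke}. For $T_\ell$, each $g_i(\ell)\in R(pM)$ has reduced norm $\ell$, so its image under $\iota_p$ lies in $\Gamma_0^{\rm loc}(p)\cap\GL_2(\Z_p)$ and therefore fixes the edge $e_\ast$; evaluating $T_\ell\cdot\mu^{\mathcal Y}_{\rm univ}$ at $e_\ast$ then yields $\sum_i[t_i(g)]=T^{\rm ab}[g]$ for $g\in\Gap$, which matches the coefficient-wise $\mathcal{H}(pM)$-action via Shapiro's lemma. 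For $W_p$ and $W_\infty$ the actions are single-coset and proceed analogously, exploiting the normalizer property of $\om_p$ and $\om_\infty$ together with the relation $\om_p(e_\ast)=\bar{e}_\ast$; the latter produces a sign consistent with the well-known identity $U_p=-W_p$ on $H$. I expect the main technical obstacle to be the cocycle-level bookkeeping in the $T_\ell$ case, namely verifying that the elements $t_i(g)=g_i^{-1}gg_{\sigma(i)}$ computed for the Hecke pair $(\Gamma,S_{(p,M)})$ coincide with the corresponding elements for $(\Gap,S_{R(pM)})$, and that any discrepancy at edges other than $e_\ast$ is absorbed by a coboundary arising from a change of representatives $\mathcal Y\leadsto\{g_i\gamma_e\}$ in the sense of part~(iii) of Proposition \ref{3}.
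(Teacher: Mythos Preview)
Your overall strategy matches the paper's: invoke \eqref{Heckepairing} to move the operator onto the cocycle, reduce (using that $t_r$ kills $\ker(\varrho)$) to a comparison of two Hecke actions on the class of $\mu^{\mathcal Y}_{\mathbb H}$ in $H^1\bigl(\Gamma,\cF_0(\E,\mathbb H)\bigr)$, and then verify this comparison. For $T_\ell$ your plan is essentially the paper's: the ``cocycle-level bookkeeping'' you flag as the main obstacle is precisely the content of Lemma~\ref{new}, whose proof shows $T_\ell(\mu^{\mathcal Y}_{\mathbb H})=\mu^{(\ell)}_{\mathbb H}+\delta(\varphi)$ with $\mu^{(\ell)}_{\mathbb H}$ the coefficient-wise action and $\varphi$ exactly a coboundary coming from a change of representatives as in Proposition~\ref{3}(iii). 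So on that front you and the paper agree, though the paper must also check that $\mu^{(\ell)}_{\mathbb H}$ is itself harmonic (part (i) of Lemma~\ref{new}), which requires the radiality of $\mathcal Y$ and is not an immediate consequence of the Shapiro picture.

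For the involutions the two approaches genuinely diverge. The paper does \emph{not} show that $W\cdot\boldsymbol{\mu}^{\mathcal Y}_{\mathbb H}$ equals the coefficient action in cohomology; instead it compares the two sides of \eqref{hec} directly at the level of multiplicative integrals, evaluating $\langle W\cdot c,\mu^{\mathcal Y}_{\mathbb H}\rangle$ with the system $\mathcal Y$ and $W\cdot\langle c,\mu^{\mathcal Y'}_{\mathbb H}\rangle$ with the conjugate radial system ${}^\om\mathcal Y=\{\om\gamma_{\om^{-1}e}\om^{-1}\}$, and checks these agree pointwise. Your route via Shapiro at $e_\ast$ runs into trouble for $W_p$: since $\om_p^{-1}e_\ast=\bar e_\ast$, one computes for $g\in\Gap$ that
\[
(W_p\cdot\mu^{\mathcal Y}_{\mathbb H})_g(e_\ast)=\mu^{\mathcal Y}_{\mathbb H,\om_p^{-1}g\om_p}(\bar e_\ast)=-\pi_{\mathbb H}\bigl([\om_p^{-1}g\om_p]\bigr),
\]
which is $-W_p^{\rm ab}\cdot\xi(g)$, not $+W_p^{\rm ab}\cdot\xi(g)$. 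Your remark about $U_p=-W_p$ on $H$ does not resolve this: that identity relates two endomorphisms of $\mathbb H$, whereas what is at stake is the agreement between the $\mathcal H(p,M)$-action on the cohomology class and the $\mathcal H(pM)$-action on coefficients, for $W_p$ alone. The paper's device of pairing against two different harmonic systems (and using Lemma~\ref{Eis} for independence of the result) is precisely what circumvents this orientation-reversal sign; your Shapiro shortcut needs an extra argument here that you have not supplied.
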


We devote the rest of this subsection to the proof of this proposition. Let
\[ T\in\bigl\{T_\ell\mid\ell\nmid pDM\bigr\}\cup\{W_p, W_\infty\} \]
and let $\mathcal Y$ be a harmonic system of representatives for $\Gap\backslash\Gamma$; we want to show that
\[ \big\langle T\cdot c,t_r\cdot\mu_{\mathbb H}^{\mathcal Y}\big\rangle=T\cdot\big\langle c,t_r\cdot\mu_{\mathbb H}^{\mathcal Y}\big\rangle \]
for all $c\in H_1(\G,\cD)$. Thanks to \eqref{Heckepairing} and the commutativity of the Hecke algebras, this is equivalent to showing that
\begin{equation} \label{hec}
\big\langle T\cdot c,\mu_{\mathbb H}^{\mathcal Y}\big\rangle=T\cdot\big\langle c,\mu_{\mathbb H}^{\mathcal Y}\big\rangle
\end{equation}
for all $c\in t_r\cdot H_1(\G,\cD)$. Note that, by Lemma \ref{Eis} and \eqref{Heckepairing} again, it follows that both $\big\langle T\cdot c,\mu_{\mathbb H}^{\mathcal Y}\big\rangle$ and $\big\langle c,\mu_{\mathbb H}^{\mathcal Y}\big\rangle$ are independent of the chosen harmonic system $\mathcal Y$.

Let $W\in\{W_p,W_\infty\}$ denote any of the two involutions. We shall prove \eqref{hec} by computing the two sides of the equality by means of two different choices of harmonic systems $\mathcal Y$.

In both Hecke algebras $\mathcal H(p,M)$ and $\mathcal H(pM)$ one has that $W=T(\om)$ for an element $\om\in R(pM)$ satisfying $\G\om=\om\G$ and $\Gap\om=\om\Gap$. On $H_1(\Gamma,\cD)$ the involution $W$ acts as
\[ c=\sum_kd_k[\g_k]\longmapsto\sum_k(\om^{-1}d_k)[\om^{-1}\g_k\om],\quad\text{$d_k\in\cD$ for all $k$}, \]
hence

\begin{equation} \label{w1}
\big\langle W\cdot c,\mu_{\mathbb H}^{\mathcal Y}\big\rangle=\mint f_{\om^{-1} d_k}(t)d\mu_{\mathbb H,\om^{-1}\g_k\om}^{\mathcal Y}(t)=\lim_{\mathcal U}\prod_k\prod_{U\in\mathcal U}f_{d_k}(t_U)\otimes\mu_{\mathbb H,\om^{-1}\g_k\om}^{\mathcal Y}(\om^{-1}U).
\end{equation}
On the other hand, $W$ acts on $\mathbb H$ simply by conjugation by $\om$, so that

\begin{equation} \label{w2}
W\cdot\big\langle c,\mu_{\mathbb H}^{\mathcal Y}\big\rangle=\lim_{\mathcal U}\prod_k\prod_{U\in\mathcal U}f_{d_k}(t_U)\otimes\om^{-1}\mu^{\mathcal Y}_{\mathbb H,\g_k}(U)\om.
\end{equation}
Given a radial (hence harmonic, by Lemma \ref{radial-lemma}) system $\mathcal Y=\{\g_e\}_{e\in\E^+}$, let us introduce the system
\[ {}^\om\mathcal Y:=\bigl\{\om\g_{\om^{-1}(e)}\om^{-1}\bigr\}_{e\in\E^+}. \]
Notice that ${}^\om\mathcal Y$ is again radial, because conjugation by $\om_\infty$ (respectively, $\om_p$) leaves each of $\Gap$, $\Ga$, $\hGa$ and $\G$ invariant (respectively, leaves $\Gap$ and $\G$ invariant and interchanges $\Ga$ and $\hGa$). Again by Lemma \ref{radial-lemma} we obtain that ${}^\om\mathcal Y$ is harmonic, and thus the above observations apply.

If one computes \eqref{w1} with respect to $\mathcal Y$ and computes \eqref{w2} with respect to ${}^\om\mathcal Y$ it follows that \eqref{w1} is equal to \eqref{w2}, as we wished to show.

Now let $\ell\nmid pDM$ be a prime number and fix a radial system $\mathcal Y$.

\begin{lemma} \label{new}
Let $\mu^{(\ell)}_{\mathbb H}\in Z^1\bigl(\G,\cF_0(\E,\mathbb H)\bigr)$ be the cocycle determined by the rule
\[ \mu^{(\ell)}_{\mathbb H,\g}(e):=\sum_i\pi_\mathbb H\big(\bigl[t_i(g_{\g,e})\bigr]\big) \]
for every $\g\in\G$ and every even edge $e\in\E^+$. Then
\begin{enumerate}
\item[(i)] $\mu^{(\ell)}_{\mathbb H}$ is a cocycle which takes values in $\HC(\mathbb H)$;
\item[(ii)] $T_\ell\bigl(\mu_{\mathbb H}^{\mathcal Y}\bigr)=\mu^{(\ell)}_{\mathbb H}+b$ for some $b\in\ker(\varrho)\subset Z^1\bigl(\G,\HC(\mathbb H)\bigr)$.
\end{enumerate}
\end{lemma}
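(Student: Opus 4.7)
My plan is to prove (i) by identifying $\mu^{(\ell)}_{\mathbb H}$ pointwise with the Hecke-twist of $\mu^{\mathcal Y}_{\mathbb H}$ under the action of $T_\ell\in\mathcal{H}(pM)$ on the coefficient module $\mathbb H$, and then to deduce (ii) via Shapiro's lemma.

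For part (i), recall that $T_\ell\in\mathcal{H}(pM)$ acts on $\Gap^{\rm ab}_T$ (and hence on its quotient $\mathbb H$) by the formula $[g]\mapsto\sum_i[t_i(g)]$, where the $t_i$ arise from the coset decomposition $\Gap\, g_0(\ell)\Gap=\coprod_i g_i(\ell)\Gap$. Provided one chooses the $g_i(\ell)$ to simultaneously represent $\G\, g_0(\ell)\G$ (which is possible since both double cosets have $\ell+1$ cosets and admit reduced-norm-$\ell$ representatives in $R(pM)$), the defining formula for $\mu^{(\ell)}_{\mathbb H}$ becomes the pointwise identity
\[
\mu^{(\ell)}_{\mathbb H,\g}(e)\;=\;T_\ell\cdot\mu^{\mathcal Y}_{\mathbb H,\g}(e),
\]
with $T_\ell$ acting only on the value in $\mathbb H$. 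As this action is $\Z$-linear, it preserves the cocycle condition and harmonicity pointwise, giving $\mu^{(\ell)}_{\mathbb H}\in Z^1\bigl(\G,\HC(\mathbb H)\bigr)$.

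For part (ii), I would apply Shapiro's lemma in the guise $H^1\bigl(\G,\cF_0(\E,\mathbb H)\bigr)\simeq H^1(\Gap,\mathbb H)$, arising from $\cF_0(\E,\mathbb H)\simeq\operatorname{Ind}_{\Gap}^{\G}\mathbb H$ via evaluation at $e_\ast$. Using a radial $\mathcal Y$ with $\g_{e_\ast}=1$, one checks that $g_{\g,e_\ast}=\g$ for $\g\in\Gap$, so the Shapiro image of $[\mu^{\mathcal Y}_{\mathbb H}]$ is the tautological class $\g\mapsto\pi_{\mathbb H}([\g])$, and that of $[\mu^{(\ell)}_{\mathbb H}]$ is $T_\ell\cdot(\text{tautological class})$ by the identity from part (i). By naturality of Hecke operators under Shapiro's lemma, the image of $[T_\ell\mu^{\mathcal Y}_{\mathbb H}]$ (for the action of $T_\ell\in\mathcal{H}(p,M)$ on the $\G$-side) is also $T_\ell\cdot(\text{tautological class})$. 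Hence $[T_\ell\mu^{\mathcal Y}_{\mathbb H}]=[\mu^{(\ell)}_{\mathbb H}]$ in $H^1\bigl(\G,\cF_0(\E,\mathbb H)\bigr)$, so $b:=T_\ell\mu^{\mathcal Y}_{\mathbb H}-\mu^{(\ell)}_{\mathbb H}$ is a coboundary there. Since $b$ is still a cocycle in $Z^1\bigl(\G,\HC(\mathbb H)\bigr)$ (as a difference of two such cocycles, using that the tree action of any $s\in S_{(p,M)}$ preserves $\HC(\mathbb H)$ when $\mathbb H$ carries the trivial action), its class in $H^1\bigl(\G,\HC(\mathbb H)\bigr)$ must lie in $\ker(\varrho)$, proving (ii).

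The main obstacle I anticipate is justifying the compatibility of the two avatars of $T_\ell$, in $\mathcal{H}(p,M)$ and $\mathcal{H}(pM)$, under the Shapiro isomorphism. This reduces to a coherent choice of coset representatives $g_i(\ell)$ shared by both double coset decompositions, combined with a careful verification that the evaluation map $\nu\mapsto\nu(e_\ast)$ intertwines the two Hecke actions—a point that the proof should make entirely explicit in order for the rest of the argument to go through as a formality.
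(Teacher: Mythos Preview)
Your approach is correct but genuinely different from the paper's, and in fact conceptually cleaner. Your key observation for (i)---that $\mu^{(\ell)}_{\mathbb H}$ is nothing other than the post-composition of $\mu^{\mathcal Y}_{\mathbb H}$ with the $\Z$-linear endomorphism $T_\ell\in\End(\mathbb H)$---immediately yields the cocycle condition and harmonicity, since both are preserved under any $\Z$-linear map of coefficients. The paper does not make this observation; instead it proves harmonicity of $\mu^{(\ell)}_{\mathbb H}$ by a separate argument, rewriting $\sum_i[t_i(g_{\g,e})]$ as a weighted sum of conjugates $[g_s^{-1}g_{\g,e}g_s]$ and checking that each such conjugate defines a harmonic cocycle by mimicking the radial-system argument of Proposition~\ref{radial-lemma} for the conjugated groups $g_s^{-1}\Gap g_s$.

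For (ii), the paper proceeds by a direct chain-level computation: it expands $T_\ell(\nu)_\g(e)$ explicitly, manipulates the defining relations $\g_e\g=g_{\g,e}\g_{\g^{-1}(e)}$ together with $\g_{g^{-1}(e)}=h_{g,e}g^{-1}\g_e g$, and exhibits a concrete function $\varphi\in\cF_0(\E,\mathbb H)$ whose coboundary is $T_\ell(\nu)-\nu^{(\ell)}$. Your route via Shapiro's lemma and Hecke compatibility is more structural: you reduce to the equality of two Hecke actions on the tautological class in $H^1(\Gap,\mathbb H)$, which holds because the coefficient action of the $g_i(\ell)$ on $\mathbb H$ is trivial. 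What your approach buys is brevity and a clear explanation of \emph{why} the two $T_\ell$'s agree; what the paper's approach buys is an explicit $\varphi$ and independence from the Ash--Stevens compatibility lemma. Note that the compatibility you invoke does hold here precisely because the $g_i(\ell)\in R(pM)$ have reduced norm prime to $p$, hence lie in $\Gamma_0^{\rm loc}(p)$ under $\iota_p$ and fix $e_\ast$; this is the point you should make explicit, as you yourself anticipate. Once that is pinned down, your argument is complete.
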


\begin{proof} For simplicity, write $\nu:=\mu_{\mathbb H}^{\mathcal Y}\in Z^1\bigl(\G,\HC(\mathbb H)\bigr)$ and $\nu^{(\ell)}:=\mu^{(\ell)}_{\mathbb H}$. Set $I(\ell):=\{0,\dots,\ell\}$. An easy computation shows that
\[ T_\ell(\nu)_\g=-\sum_i\g g_j\cdot\nu_{t_i^{-1}(\g)} \]
for all $\g\in\G$, where $j=j(i)$ is the permutation of $I(\ell)$ such that $t_i(\g)=g_i^{-1}\g g_j$.

For every edge $e\in\mathcal E^+$ one has
\[ \bigl(\g g_j\cdot\nu_{t_i^{-1}(\g)}\bigr)(e)=\pi_\mathbb H\big(\bigl[g_{t_i^{-1}(\g),g_j^{-1}\g^{-1}e}\bigr]\big) \]
with $g_{t_i^{-1}(\g),g_j^{-1}\g^{-1}e}\in\Gamma_0^D(pM)$ satisfying the equation
\begin{equation} \label{eq++}
\g_{g_j^{-1}\g^{-1}e}t_i^{-1}(\g)=g_{t_i^{-1}(\g),g_j^{-1}\g^{-1}e}\g_{t_ig_j^{-1}\g^{-1}(e)}=g_{t_i^{-1}(\g),g_j^{-1}\g^{-1}e}\g_{g_i^{-1}(e)}.
\end{equation}
For every $g\in\GL_2(\Q_p)$ and every $\g\in\G$ with $g^{-1}\g g\in\G$ there exists $h_{g,e}\in\G_0^D(pM)$ such that $\g_{g^{-1}(e)}=h_{g,e}g^{-1}\g_e g$. Using the equality $\g_{\g^{-1}(e)}\g^{-1}=g_{\g^{-1},\g^{-1}(e)}\g_e$, one shows that
\[ \g_{g_j^{-1}\g^{-1}e}t_i^{-1}(\g)=h_{g_j,\g^{-1}(e)}\bigl(g_{j}^{-1}g_{\g^{-1},\g^{-1}(e)}g_i\bigr)h^{-1}_{g_i,e}\g_{g_i^{-1}(e)}. \]
Comparing with formula \eqref{eq++}, we deduce that
\[ g_{t_i^{-1}(\g),g_j^{-1}\g^{-1}e}=h_{g_j,\g^{-1}(e)}\bigl(g_j^{-1}g_{\g^{-1},\g^{-1}(e)}g_i\bigr)h^{-1}_{g_i,e}. \]
Since $g_{t_i^{-1}(\g),g_j^{-1}\g^{-1}e}$, $h_{g_j,\g^{-1}(e)}$ and
$h^{-1}_{g_i,e}$ are in $\G_0^D(pM)$, we conclude that
$g_j^{-1}g_{\g^{-1},\g^{-1}(e)}g_i$ belongs to $\G_0^D(pM)$ as well.
Accordingly, in $H$ we have
\[ \bigl[g_{t_i^{-1}(\g),g_j^{-1}\g^{-1}e}\bigr]=\bigl[g_j^{-1}g_{\g^{-1},\g^{-1}(e)}g_i\bigr]+\bigl[h_{g_j,\g^{-1}(e)}\bigr]-[h_{g_i,e}]. \]
An easy calculation now yields that $g_{\g,e}^{-1}=g_{\g^{-1}(e),\g^{-1}}$. Hence
\[ T_\ell(\nu)_\g(e)=\sum_i\pi_\mathbb H\bigl([t_i(g_{\g,e})]\bigr)-\sum_i\pi_\mathbb H\bigl(\bigl[h_{g_i,\g^{-1}(e)}\bigr]\bigr)+\sum_i\pi_\mathbb H\bigl([h_{g_i,e}]\bigr). \]
Let us introduce the function
\[ \varphi:\mathcal E^+\longrightarrow \mathbb H,\qquad e\longmapsto\sum_i\pi_\mathbb H\bigl([h_{g_i,e}]\bigr) \]
and extend it to an element of $\mathcal F_0(\mathcal E,\mathbb H)$ by the obvious recipe. Since
\[ (\gamma\varphi)(e)=\varphi\bigl(\gamma^{-1}(e)\bigr)=\sum_i\pi_\mathbb H\bigl(\bigl[h_{g_i,\g^{-1}(e)}\bigr]\bigr), \]
it follows that the cocycle $\nu^{(\ell )}$ represents the same class as $T_\ell(\nu)$ in $Z^1\bigl(\G,\mathcal F_0(\mathcal E,\mathbb H)\bigr)$. In other words, the class of $b:=T_\ell(\nu)-\nu^{(\ell )}$ in $H^1\bigl(\G,\mathcal F_0(\mathcal E,\mathbb H)\bigr)$ is trivial.

Let us now prove that $\nu^{(\ell)}\in Z^1\bigl(\G,\HC(\mathbb H)\bigr)$. In order to show this, write $i\mapsto\sigma(i)$ for the permutation of $I(\ell)$ such that
$t_i(g_{\g,e})=g_ig_{\g,e}g_{\sigma(i)}$. Note that
\[ \sum_i[t_i(g_{\g,e})]=\sum_i\bigl[g_i^{-1}g_{\g,e}g_{\sigma(i)}\bigr]=\sum_{s\in S}\bigl[g_s^{-1}g_{\g,e}^{m_s}g_s\bigr]=\sum_{s\in S}m_s\bigl[g_s^{-1}g_{\g,e}g_s\bigr] \]
where $S$ is a suitable subset of $I(\ell)$ and $m_s\in\Z$ for all $s\in S$. Therefore it suffices to show that the cocycle defined on $\Gamma$ by the rule
\[ \g\longmapsto\Big(e\mapsto\pi_\mathbb H\big(\bigl[g_s^{-1}g_{\g,e}g_s\bigr]\big)\Big) \]
for $e\in\E^+$ is harmonic. Keep the notation of Definition \ref{radial} for the radial system $\mathcal Y$. For every $s\in S$ define
\[ H_s:=g_s^{-1}\G_0^D(pM)g_s,\qquad\G_s:=g_s^{-1}\G g_s \]
as subgroups of $\GL_2(\Q_p)$. Then a system of representatives for the cosets $H_s\backslash\G_s$ is given by the set
\[ \bigl\{\g'_e:=g_s^{-1}\g_i\g_vg_s\bigr\}. \]
Arguing as in Lemma \ref{radial-lemma}, one immediately shows that the cocycle in $Z^1\bigl(g_s^{-1}\G g_s,\mathcal F_0(\mathcal E,\mathbb H)\bigr)$ defined on $e\in\E^+$ by the rule
\[ g_s^{-1}\g g_s\longmapsto\Big(e\mapsto\pi_\mathbb H\bigl(\bigl[g'_{\g,e}\bigr]\bigr)\Big), \]
where $\g'_eg_s^{-1}\g g_s=g'_{\g,e}\g'_{e'}$, is harmonic. Since $g'_{\g,e}=g_s^{-1}g_{\g,e}g_s$, this is enough to conclude that $\nu^{(\ell )}$ takes values in $\mathcal F_{\rm har}(\mathbb H)$ as well. Hence $b$ actually lies in $Z^1\big(\G,\mathcal F_{\rm har}(\mathbb H)\big)$. By the above observation, if $\boldsymbol{b}$ is the class of $b$ in $H^1\bigl(\Gamma,\HC(\mathbb H)\bigr)$ then $\varrho(\boldsymbol{b})=0$, as we wanted. \end{proof}

Now the equivariance of the integration map under $T_\ell$ follows easily. In fact, keeping the notation introduced before, Lemma \ref{new} implies that
\[ \langle T_\ell(c),\nu\rangle=\langle c,T_\ell(\nu)\rangle=\langle c,\nu^{(\ell)}\rangle=T_\ell\bigl(\langle c,\nu\rangle\bigr), \]
which concludes the proof of Proposition \ref{HeckeEquiv}.

\subsection{The $p$-adic valuation of the integration map} \label{sec-ord}

Unless otherwise stated, for the rest of the article set $\mathbb H:=H$. Let
\[ {\rm red}:\mathcal H_p\longrightarrow\cl T \]
be the $\GL_2(\Q_p)$-equivariant reduction map which is described, e.g., in \cite[I.2]{BC} and choose a base point $\tau\in K_p-\Q_p$ such that ${\rm red}(\tau)=v_\ast$.

Let $\g_1,\g_2$ be two arbitrary elements of $\G$. Let $\{e_1,\dots,e_{n}\}$ be the {\em even geodesic} joining $v_\ast$ with ${\rm red}\big(\gamma_1(\tau)\big)=\gamma_1(v_\ast)\in \V^+$. By this we mean that $e_i\in\cl E^+$ are even edges such that
\begin{itemize}
\item $s(e_1)=v_\ast$, $s(e_{n})=\gamma_1(v_\ast)=:v_n$;
\item $t(e_i)=t(e_{i+1})=:v_i$ for \emph{odd} indices in $\{1,\dots,n-1\}$;
\item $s(e_i)=s(e_{i+1})=:v_i$ for \emph{even} indices in $\{2,\dots,n-2\}$.
\end{itemize}
Notice that, above, the integer $n$ is always even. It is our aim here to prove the following result, which will be used in the next section.

\begin{proposition} \label{eq-II}
Keep notation as above. If the $g_i$ for $i=1,\dots,n$ are elements of $\Gap$ such that $\gamma_{e_i}\gamma_2=g_i\gamma_{e'_i}$ with $e'_i\in\E^+$ then
\[ \ord_p\bigg(\mint_{\PP^1(\Q_p)}\frac{t-\gamma_1(\tau)}{t-\tau}d\mu_{H,\gamma_2}^{\mathcal Y}(t)\bigg)=\sum_{i=1}^{n}(-1)^i[g_i]\in H. \]
\end{proposition}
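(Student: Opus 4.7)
The plan is to convert the multiplicative integral into an additive one by applying $\ord_p$, decompose the resulting locally constant $\Z$-valued integrand into characteristic functions of disks attached to the edges of the even geodesic, and then read off each resulting term directly from the definition of $\mu^{\mathcal Y}_{H,\gamma_2}$.

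First, I would arrange (as we may by Proposition \ref{radial-lemma}) that $\mathcal Y$ is harmonic, so that $\mu^{\mathcal Y}_{H,\gamma_2}\in\HC(H)$ corresponds via \eqref{*} to an $H$-valued measure $\nu$ on $\PP^1(\Q_p)$ of total mass zero with $\nu(U_e)=\mu^{\mathcal Y}_{H,\gamma_2}(e)$ for every $e\in\E^+$. Passing to $\ord_p$ in the Riemann products defining $\mint f\,d\nu$ (and using that $\ord_p\otimes\mathrm{id}$ sends $\C_p^\times\otimes H$ to $\Q\otimes H$, with our output landing in $\Z\otimes H=H$) converts the problem into computing the additive integral
\[
\int_{\PP^1(\Q_p)}\phi(t)\,d\nu(t)\;\in\;H,\qquad\phi(t):=\ord_p\bigg(\frac{t-\gamma_1(\tau)}{t-\tau}\bigg),
\]
of the $\Z$-valued locally constant function $\phi$ against the $H$-valued measure $\nu$.

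Next, I would analyse $\phi$ using the standard description of $\ord_p(t-z)$ for $z\in\cl H_p$ and $t\in\PP^1(\Q_p)$ in terms of the reduction map and distances in $\cl T$ (cf.\ \cite[\S I.2]{BC}). Since $\mathrm{red}(\tau)=v_\ast$ and $\mathrm{red}(\gamma_1(\tau))=v_n$, the jumps of $\phi$ occur precisely as $t$ crosses the edges $e_1,\dots,e_n$ of the even geodesic, and a direct case analysis tracking which side of each $e_i$ contains $\tau$, $\gamma_1(\tau)$, and a generic $t$ yields a decomposition of the form
\[
\phi\;=\;c\;+\;\sum_{i=1}^{n}(-1)^i\,\mathbf{1}_{U_{e_i}},
\]
with $c\in\Z$. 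The signs $(-1)^i$ arise from the alternating orientations of consecutive even edges along the path (odd $i$: $e_i=(v_{i-1},v_i)$; even $i$: $e_i=(v_i,v_{i-1})$), combined with the fact that $\tau$ and $\gamma_1(\tau)$ lie on opposite sides of each $e_i$.

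Finally, integrating against $\nu$ kills the constant $c$ (because $\nu$ has total mass zero), leaving
\[
\int_{\PP^1(\Q_p)}\phi\,d\nu\;=\;\sum_{i=1}^{n}(-1)^i\,\nu(U_{e_i})\;=\;\sum_{i=1}^{n}(-1)^i\,\mu^{\mathcal Y}_{H,\gamma_2}(e_i).
\]
By hypothesis, $\gamma_{e_i}\gamma_2=g_i\gamma_{e'_i}$, hence $g_i=g_{\gamma_2,e_i}$ in the notation of Definition \ref{muniv}, and therefore $\mu^{\mathcal Y}_{H,\gamma_2}(e_i)=\pi_H\bigl([g_i]\bigr)=[g_i]\in H$, which yields the claimed formula. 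The main technical obstacle is the sign bookkeeping in the decomposition of $\phi$: verifying that each $e_i$ contributes exactly $(-1)^i$ requires carefully combining the alternating source/target pattern of the even edges with the convention fixing $U_e$ versus $U_{\bar e}$, and checking that all off-path branches cancel (either because they carry no variation of $\phi$ or because their contributions telescope via the harmonic cocycle relations).
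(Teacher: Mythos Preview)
Your approach is correct and follows the same underlying idea as the paper's, but the paper organizes the computation differently in a way that sidesteps the sign bookkeeping you flag at the end. Rather than decomposing the full integrand $\phi$ globally, the paper introduces intermediate points $\tau_i\in K_p-\Q_p$ with $\mathrm{red}(\tau_i)=v_i$, telescopes $\frac{t-\gamma_1(\tau)}{t-\tau}=\prod_{i=0}^{n-1}\frac{t-\tau_{i+1}}{t-\tau_i}$, and then proves a single-edge lemma: for adjacent $v_1,v_2$ with $e$ the even edge between them, $\mint\ord_p\bigl(\frac{t-\tau_2}{t-\tau_1}\bigr)d\mu^{\mathcal Y}_{H,\gamma}=\mp[g]$ according as $v_1\in\V^\pm$. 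That lemma is proved by using $\Gamma$-equivariance of the pairing \eqref{pairing-1} to transport $e$ to $e_*$ and invoking the explicit formula $\ord_p\bigl(\frac{t-\tau_{\hat v_*}}{t-\tau_{v_*}}\bigr)=-\mathbf 1_{\Z_p}(t)$ from \cite[I.2]{BC}. Summing over the geodesic then gives the alternating signs automatically. Your global decomposition $\phi=c+\sum_i(-1)^i\mathbf 1_{U_{e_i}}$ is exactly what one obtains by adding up these single-edge pieces, so the two arguments are equivalent; the paper's packaging simply makes the parity pattern transparent without a separate case analysis.
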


In order to prove the formula in the proposition, let $\tau_0:=\tau$, $\tau_n:=\gamma_1(\tau)$ and for every $i=1,\dots,n-1$ choose $\tau_i\in K_p-\Q_p$ such that ${\rm red}(\tau_i)=v_i$. Since
\[ \frac{t-\gamma_1(\tau)}{t-\tau}=\frac{t-\tau_n}{t-\tau_{n-1}}\cdot\frac{t-\tau_{n-1}}{t-\tau_{n-2}}\cdot\dots\cdot\frac{t-\tau_1}{t-\tau_0}, \]
it is easy to check that
\[ \ord_p\bigg(\mint\frac{t-\gamma_1(\tau)}{t-\tau}d\mu_{H,\gamma_2}^{\mathcal Y}(t)\bigg)=\sum_{i=0}^{n-1}\mint\ord_p\left(\frac{t-\tau_{i+1}}{t-\tau_i}\right)d\mu_{H,\gamma_2}^{\mathcal Y}(t). \]
Proposition \ref{eq-II} now follows recursively from the next computation.

\begin{lemma} \label{lemma-ord-I}
Let $v_1,v_2\in\V$ be consecutive vertices and let $\tau_1,\tau_2\in K_p-\Q_p$ be such that ${\rm red}(\tau_i)=v_i$ for $i=1,2$. Set $e:=(v_1,v_2)$ if $v_1\in\V^+$ and $e:=(v_2,v_1)$ otherwise. If $\gamma\in\G$ then
\[ \mint\ord_p\left(\frac{t-\tau_{2}}{t-\tau_1}\right)d\mu_{H,\gamma}^{\mathcal Y}(t)=\begin{cases}-[g] & \text{if $v_1\in \V^+$}\\[2mm][g] & \text{if $v_1\in \V^-$}\end{cases} \]
where $g\in \Gap$ is such that $\gamma_e \gamma=g\gamma_{e'}$ for some $e'\in\cl E^+$.
\end{lemma}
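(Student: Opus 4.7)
The plan is to combine the harmonicity of $\mathcal Y$ with a direct ultrametric computation of the integrand
\[
\phi(t) := \ord_p\!\left(\frac{t-\tau_2}{t-\tau_1}\right),
\]
which is locally constant on $\PP^1(\Q_p)$ and, crucially, is constant on each of $U_e$ and $U_{\bar e}$; the integral then collapses to a single evaluation of $\nu_\gamma := \mu_{H,\gamma}^{\mathcal Y}$ on $U_e$.

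First, I would extract the harmonicity input. Since $\mathcal Y$ is harmonic (Definition \ref{harmf}), $\mu_H^{\mathcal Y}$ takes values in $\HC(H)$, and under the isomorphism \eqref{*} the measure $\nu_\gamma$ satisfies $\nu_\gamma(U_{e'}) = [g_{\gamma,e'}]$ for every $e'\in\E^+$, with $g_{\gamma,e'}\in\Gap$ as in Definition \ref{muniv}. Specialising to the even edge $e$ of the statement, the defining relation $\gamma_e\gamma = g\,\gamma_{e'}$ gives $\nu_\gamma(U_e)=[g]$, and since $\nu_\gamma$ has total mass zero, $\nu_\gamma(U_{\bar e}) = -[g]$.

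Next, I would pin down the two constant values of $\phi$. By $\GL_2(\Q_p)$-equivariance of the reduction map and transitivity of $\GL_2(\Q_p)$ on $\E$, I may reduce to $e = e_\ast = (v_\ast,\hat v_\ast)$: applying $g\in\GL_2(\Q_p)$ with $g(e_\ast)=e$ alters $\phi$ by a M\"obius Jacobian factor that is constant in $t$, hence leaves the quantity $\phi|_{U_e}-\phi|_{U_{\bar e}}$ (the only one entering the integral, by mass-zero) unchanged. A short ultrametric check also shows that replacing $\tau_i$ by another lift of $v_i$ does not alter $\phi$, so I may take convenient lifts $\tau\in K_p-\Q_p$ with $\ord_p(\tau)=0$ reducing to $v_\ast$ and $\tau'\in K_p-\Q_p$ with $\ord_p(\tau')=-1$ reducing to $\hat v_\ast$. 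Splitting on $\ord_p(t)$, the ultrametric inequality yields $\ord_p(t-\tau)=0$ and $\ord_p(t-\tau')=-1$ for $t\in U_{e_\ast}=\Z_p$, while $\ord_p(t-\tau)=\ord_p(t-\tau')=\ord_p(t)$ for $t\in U_{\bar e_\ast}$.

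Finally I would assemble the two cases. When $v_1\in\V^+$ the lift $\tau_1$ sits on the source side of $e$ (playing the role of $\tau$) and $\tau_2$ on the target side ($\tau'$), so $\phi|_{U_e}=-1$ and $\phi|_{U_{\bar e}}=0$, yielding $\int\phi\,d\nu_\gamma=-\nu_\gamma(U_e)=-[g]$. When $v_1\in\V^-$ the orientation of $e$ is reversed and the roles of $\tau_1,\tau_2$ swap, so $\phi|_{U_e}=+1$ and $\phi|_{U_{\bar e}}=0$, giving $\int\phi\,d\nu_\gamma=[g]$. The only delicate point is the book-keeping of which of $\tau_1,\tau_2$ lies at the source versus the target of the even edge $e$; beyond that, the argument is a mechanical ultrametric computation.
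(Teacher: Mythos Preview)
Your proposal is correct and follows essentially the same approach as the paper: reduce to the base edge $e_\ast$, compute the integrand ultrametrically there (it is $-1$ on $\Z_p$ and $0$ on the complement), and read off $\mu_{H,\gamma}^{\mathcal Y}(U_e)=[g]$ from Definition~\ref{muniv}. The only noteworthy difference is in how the reduction is carried out: the paper transports via the specific element $\gamma_e\in\Gamma$ from the system $\mathcal Y$ and invokes the $\Gamma$-equivariance of the pairing~\eqref{pairing-1}, which moves both the integrand and the measure simultaneously and picks up no extraneous constant; you instead transport only the integrand by a general $g\in\GL_2(\Q_p)$, accept the constant M\"obius factor, and kill it using the total-mass-zero property coming from harmonicity of $\mathcal Y$. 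Since the paper's computation already gives $\phi\equiv 0$ on $U_{\bar e_\ast}$, it does not actually need the mass-zero input, so your route is marginally less economical but equally valid.
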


\begin{proof} Let us give the details only for $v_1\in \V^+$, the other case being analogous. Consider the points $\tau_{\hat{v}_\ast}:=\g_e(\tau_2)$ and $\tau_{v_\ast}:=\g_e(\tau_1)$, so that
\[ {\rm red}(\tau_{v_\ast})=v_\ast,\qquad {\rm red}(\tau_{\hat{v}_\ast})=\hat{v}_\ast. \]
Thanks to the $\G$-equivariance of \eqref{pairing-1}, we have
\[ \mint\ord_p\left(\frac{t-\tau_2}{t-\tau_1}\right)d\mu_{H,\gamma}^{\mathcal Y}(t)=\mint\ord_p\left(\frac{t-\tau_{\hat{v}_\ast}}{t-\tau_{v_\ast}}\right)
\cdot d({}^{\g_e}\mu_{H,\gamma}^{\mathcal Y})(t). \]
Now, by \cite[I.2]{BC} (see also \cite[p. 444]{Das}), there is an equality
\[ \ord_p\left(\frac{t-\tau_{\hat{v}_\ast}}{t-\tau_{v_\ast}}\right)=\begin{cases}-1 & \text{if $t\in\Z_p$}\\[2mm]0 & \text{if $t\not\in\Z_p$}\end{cases}, \]
hence
\[ \mint\ord_p\left(\frac{t-\tau_2}{t-\tau_1}\right)d\mu_{H,\gamma}^{\mathcal Y}(t)=-\mu_{H,\gamma}^{\mathcal Y}(\g_e^{-1}\Z_p). \]
By definition, we have
\[ \mu_{H, \gamma}^{\mathcal Y}\bigl(\g_e^{-1}\cdot\Z_p\bigr)=\mu_{H,\gamma}^{\mathcal Y}\bigl(\g_e^{-1}(e_\ast)\bigr)=[g] \]
where $\g_e\gamma=g\gamma_{\g^{-1}(e)}$. Thus we find that
\[ \mint\ord_p\left(\frac{t-\tau_2}{t-\tau_1}\right)d\mu_{H,\gamma}^{\mathcal Y}(t)=-[g], \]
which is the searched-for equality. \end{proof}

\section{The lattice of $p$-adic periods} \label{periods}

From the long exact sequence in $\Gamma$-homology associated with the short exact sequence
\[ 0\longrightarrow\cD\longrightarrow\Div\mathcal H_p\xrightarrow{\text{deg}}\Z\longrightarrow0 \]
we extract the boundary homomorphism
\[ \partial:H_2(\Gamma,\Z)\longrightarrow H_1(\Gamma,\cD). \]
Set
\[ \Phi:=\int\circ\;\partial:H_2(\Gamma,\Z)\longrightarrow T(\C_p) \]
and let $L$ be the image of $\Phi$ in $T(\C_p)$.

\begin{proposition} \label{prop-lattice}
The module $L$ is contained in $T(\Q_p)$ and is preserved by the action of the Hecke algebra.
\end{proposition}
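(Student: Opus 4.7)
The strategy is to prove the two assertions separately: the rationality $L\subset T(\Q_p)$ first, then the Hecke stability.

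For the rationality, the plan is the following. Fix a base point $\tau\in K_p-\Q_p$ with $\mathrm{red}(\tau)=v_\ast$, as in \S \ref{sec-ord}, and use it to construct a set-theoretic section of the degree map $\Div\cl H_p\twoheadrightarrow\Z$. Given a $2$-cycle $c\in Z_2(\G,\Z)$, this section lifts $c$ to a chain $\tilde c\in C_2(\G,\Div\cl H_p)$ whose boundary $\partial\tilde c\in Z_1(\G,\cD)$ represents $\partial c$. Unfolding definitions, $\Phi(c)=\int\partial\tilde c$ becomes a product of multiplicative integrals of the shape
\[
\mint_{\PP^1(\Q_p)}\frac{t-\g(\tau)}{t-\tau}\,d\mu_{H,\g}(t),
\]
each of which a priori lies in $K_p^\times\otimes H$, because $\tau$ and $\g(\tau)$ are in $K_p$ for every $\g\in\G\subset\GL_2(\Q_p)$ and $\mu_{H,\g}$ is $H$-valued. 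Hence $\Phi(c)\in T(K_p)$. To descend to $T(\Q_p)$, one can exploit the fact that the homology class $\partial c\in H_1(\G,\cD)$ is independent of the choice of lift $\tilde c$: running the same computation with $\bar\tau$ in place of $\tau$ produces the Galois conjugate of $\Phi(c)$ (since $\G$ commutes with the Galois action on $K_p$), but by independence of lift it also produces $\Phi(c)$ itself. Therefore $\Phi(c)$ is invariant under $\mathrm{Gal}(K_p/\Q_p)$ acting on $K_p^\times\otimes H$ through the first factor, and thus lies in $\Q_p^\times\otimes H=T(\Q_p)$. As a useful byproduct, Proposition \ref{eq-II} yields the explicit $\ord_p$-component of $\Phi(c)$.

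For Hecke stability, the idea is to combine two facts. First, the boundary $\partial\colon H_2(\G,\Z)\to H_1(\G,\cD)$ is a connecting homomorphism of a $\G$-equivariant short exact sequence, and therefore commutes with every Hecke operator acting on both ends. Second, Proposition \ref{HeckeEquiv} provides the Hecke equivariance of the integration map with respect to $W_p$, $W_\infty$ and $T_\ell$ for $\ell\nmid pDM$. Combining these, $L=\Phi(H_2(\G,\Z))$ is preserved by this family of operators. Since $U_p=-W_p$ on $H=H_0^D$ (see \S \ref{HZ}), $U_p$-stability follows from $W_p$-stability, and analogous Atkin--Lehner identities for primes $q\mid DM$ take care of the remaining operators, so $L$ is preserved by the full Hecke algebra acting on $H$.

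The main obstacle should be the rationality step: the $\ord_p$-direction reduces to a combinatorial identity via Proposition \ref{eq-II}, but handling the ``unit'' part of each multiplicative integral hinges on the Galois-descent argument above, whose key ingredient is the fact that the cup product of Section \ref{ShimuraPairing} genuinely depends only on the homology class $\partial c\in H_1(\G,\cD)$ and not on the auxiliary base point $\tau$. The Hecke-stability step is then largely formal once Proposition \ref{HeckeEquiv} is in place.
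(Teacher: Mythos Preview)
Your argument is correct. For Hecke stability your proof matches the paper's exactly: the paper also invokes Hecke-equivariance of the connecting map $\partial$ together with Proposition \ref{HeckeEquiv} for $\int$. Your extra remark about recovering $U_q$ for $q\mid DM$ from Atkin--Lehner relations is not justified for primes $q\mid M$ in general, but this is harmless: the statement of the proposition (and the paper's proof) only concerns the operators covered by Proposition \ref{HeckeEquiv}.

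For the rationality $L\subset T(\Q_p)$ your Galois-descent argument is genuinely different from the paper's. The paper instead observes that the base point $\tau$ may be taken in $F-\Q_p$ for \emph{any} non-trivial finite extension $F/\Q_p$, giving $L\subset T(F)$ for every such $F$; since $H$ is free, $\bigcap_F T(F)=T(\Q_p)$ inside $T(\C_p)$ (intersect, for instance, over two distinct quadratic extensions), and one is done. Your approach fixes $F=K_p$ and uses the non-trivial $\sigma\in\Gal(K_p/\Q_p)$: replacing $\tau$ by $\sigma(\tau)$ only changes the cycle representative of $\partial c$, so $\Phi(c)$ is unchanged; on the other hand, since $\G\subset\GL_2(\Q_p)$ and the integration variable ranges over $\PP^1(\Q_p)$, the explicit formula shows the same substitution yields $\sigma(\Phi(c))$, whence $\Phi(c)\in T(K_p)^{\sigma}=T(\Q_p)$. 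Both routes are short. The paper's is slightly slicker because it avoids verifying that the multiplicative integral is compatible with the Galois action; yours has the virtue of staying inside the single extension $K_p$ already used throughout the paper and makes the mechanism of descent explicit.
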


\begin{proof} Let $F$ be a non-trivial finite extension of $\Q_p$. Any point $\tau\in F-\Q_p$ can be used as base point in order to compute the map $\partial$ on $Z_2(\G,\Z)$; explicitly, one has
\[ \partial\left(\sum_ia_i[\gamma_{i,1}|\gamma_{i,2}]\right)=\sum_i\bigl(\gamma_{i,1}^{-1}(\tau)-\tau\bigr)\otimes a_i[\gamma_{i,2}] \]
on a generic $2$-cycle in $Z_2(\G,\Z)$. This shows that $\partial\bigl(H_2(\Gamma,\Z)\bigr)\subset H_1\bigl(\Gamma,\Div^0\mathcal H_p(F)\bigr)$, and from the very definition of the integration pairing it then follows that $L\subset T(F)$. Since this holds for all finite extensions $F$ of $\Q_p$, we deduce that $L$ is contained in $T(\Q_p)$.

Finally, the submodule $L$ is invariant under the action of the Hecke operators because the map $\Phi$ is Hecke equivariant. In fact, the boundary map $\partial$ is Hecke equivariant by, e.g., \cite[Lemma 5.1.3]{DasPhD} (one just needs to formally replace Dasgupta's $\Delta_\Q=\PGL_2(\Q)$ with $\Gamma$), and the integration map $\int$ is Hecke equivariant as well by Proposition \ref{HeckeEquiv}. \end{proof}

Set
\[ \pi_*:=(\pi_1)_*\oplus(\pi_2)_*:H_1\bigl(X_0^D(pM),\Z\bigr)\longrightarrow H_1\bigl(X_0^D(M),\Z\bigr)^2, \]
where $(\pi_i)_*$ is the push-forward of the map $\pi_i$ for $i=1,2$.

\begin{lemma} \label{lemma-incl}
There is a canonical injection $\lambda:\ker(\pi_*)\hookrightarrow H$ which has finite cokernel and is equivariant for the action of $W_\infty$.
\end{lemma}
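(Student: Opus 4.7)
My candidate for $\lambda$ is the restriction to $\ker(\pi_*)$ of the canonical projection $H_1\bigl(X_0^D(pM),\Z\bigr) \twoheadrightarrow H$; both injectivity and finite cokernel will follow from the standard rational decomposition of $H_1\bigl(X_0^D(pM),\Q\bigr)$ into $p$-old and $p$-new parts, while the $W_\infty$-equivariance will be a direct check.

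My first step would be to establish the decomposition $H_1\bigl(X_0^D(pM),\Q\bigr) = \mathrm{Im}(\pi^*)_\Q \oplus \ker(\pi_*)_\Q$, which follows from Jacquet--Langlands combined with the classical Atkin--Lehner theory of old and new forms: under these correspondences, $\mathrm{Im}(\pi^*)_\Q$ is identified with the $p$-old part and $\ker(\pi_*)_\Q$ with the $p$-new part. A more hands-on route is to compute $\pi_* \circ \pi^*$ as a $2\times 2$ matrix on $H_1\bigl(X_0^D(M),\Z\bigr)^2$ with diagonal entries equal to $p+1$ (the common degree of $\pi_1$ and $\pi_2$) and off-diagonal entries equal to the Hecke operator $T_p$ (up to convention); the determinant factors as $(p+1-T_p)(p+1+T_p)$, and the Eichler--Shimura/Ramanujan bound $|a_p| \leq 2\sqrt{p} < p+1$ implies that it is invertible rationally, which is equivalent to the asserted decomposition.

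Granting this, injectivity of $\lambda$ is immediate: if $\lambda(x) = 0$ then some $nx$ lies in $\mathrm{Im}(\pi^*)$, so $x \otimes 1 \in \ker(\pi_*)_\Q \cap \mathrm{Im}(\pi^*)_\Q = 0$; hence $x$ is torsion in $H_1\bigl(X_0^D(pM),\Z\bigr)$ and therefore zero by torsion-freeness. The finite cokernel is equally immediate: $\ker(\pi_*) + \mathrm{Im}(\pi^*)$ has finite index in $H_1\bigl(X_0^D(pM),\Z\bigr)$, so its projection $\mathrm{Im}(\lambda)$ has finite index in $H$.

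Finally, for the $W_\infty$-equivariance, I would choose $\omega_\infty \in R(pM) \subset R(M)$ normalizing both $\Gamma_0^D(pM)$ and $\Gamma_0^D(M)$, so that $W_\infty$ is defined on both Shimura curves; a direct computation on coset representatives shows that each of $\pi_1, \pi_2$ commutes with $W_\infty$, making both $\pi^*$ and $\pi_*$ equivariant (with $W_\infty$ acting diagonally on $H_1\bigl(X_0^D(M),\Z\bigr)^2$). Consequently $W_\infty$ preserves $\ker(\pi_*)$ and $\mathrm{Im}(\pi^*)$, descends to an involution of $H$, and $\lambda$ is $W_\infty$-equivariant by construction. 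The main obstacle I foresee is precisely this last point: verifying that $W_\infty$ commutes with each degeneracy map \emph{individually} rather than swapping them requires care with the adelic description of $W_\infty$ and the specific choice of $\omega_\infty$, since the ``wrong'' choice would only give equivariance up to a permutation of the two factors.
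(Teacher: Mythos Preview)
Your proposal is correct and follows essentially the same approach as the paper: the paper's proof computes $\pi_*\circ\pi^*=\smallmat{p+1}{T_p}{T_p}{p+1}$ on $H_1\bigl(X_0^D(M),\Z\bigr)^2$, invokes the bound $|a_p|\leq 2\sqrt{p}$ to conclude this is injective with finite cokernel, and then says ``a formal argument concludes the proof''---your write-up simply supplies that formal argument (and the $W_\infty$-equivariance) explicitly. Your worry about $W_\infty$ swapping $\pi_1$ and $\pi_2$ is harmless here, since even a swap would still preserve $\ker(\pi_*)$ and $\mathrm{Im}(\pi^*)$; but in fact $\omega_\infty\in R(pM)\subset R(M)$ normalizes both groups and commutes with $\omega_p$ modulo $\Gamma_0^D(pM)$, so each $\pi_i$ is individually $W_\infty$-equivariant.
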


\begin{proof} As endomorphisms of $H_1\bigl(X_0^D(M),\Z\bigr)^2$, there is an equality
\[ \pi_*\circ\pi^*=\begin{pmatrix}p+1&T_p\\T_p&p+1\end{pmatrix}. \]
But the eigenvalues of $T_p$ are bounded by $2\sqrt p$, so the above endomorphism is injective and has finite cokernel. A formal argument concludes the proof. \end{proof}

There is yet another way to interpret $\ker(\pi_*)$. Namely, applying Shapiro's lemma to the long exact sequence in homology attached to \eqref{exseq} gives an exact sequence of abelian groups
\begin{equation} \label{ex-seq-iso}
H_2\bigl(\Gamma_0^D(M),\Z\bigr)^2\longrightarrow H_2(\G,\Z)\overset{\theta}{\longrightarrow}H_1\bigl(\Gamma_0^D(pM),\Z\bigr)\overset{\pi_*}{\longrightarrow}H_1\bigl(\Gamma_0^D(M),\Z\bigr)^2,
\end{equation}
with $\theta$ being the connecting homomorphism; it follows that
\begin{equation} \label{lemma-image}
\ker(\pi_*)=\mathrm{Im}(\theta).
\end{equation}
Below, fix $\tau\in K_p-\Q_p$ such that $\mathrm{red}(\tau)=v_\ast$ and use it to compute the map $\partial$ on $Z_2(\G,\Z)$ as in the proof of Proposition \ref{prop-lattice}.

\begin{proposition} \label{prop-int-diag}
The diagram
\[ \xymatrix@C=35pt@R=30pt{H_2(\G,\Z)\ar[r]^-\theta\ar[d]^-\partial\ar[dr]^-\Phi & \ker(\pi_*)\ar[r]^-\lambda & H\ar[d]^-{-t_r}\\
                           H_1(\G,\cD)\ar[r]^-\int & T(K_p)\ar[r]^-{\ord_p} & H} \]
is commutative.
\end{proposition}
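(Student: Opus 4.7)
The plan is to chase an arbitrary $2$-cycle $c=\sum_i a_i[\gamma_{i,1}|\gamma_{i,2}]\in Z_2(\G,\Z)$ through both legs of the diagram and check that they deliver the same element of $H$.

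First I would compute the lower leg. Using $\tau$ as base point as in the proof of Proposition \ref{prop-lattice}, one obtains the representative $\partial c=\sum_i a_i(\gamma_{i,1}^{-1}\tau-\tau)\otimes[\gamma_{i,2}]\in Z_1(\G,\cD)$. Since $\boldsymbol{\mu}$ is represented by $t_r\cdot\mu^{\mathcal Y}_H$ for any harmonic $\mathcal Y$, and $t_r$ commutes with $\ord_p$, this yields
\[
\ord_p\Phi(c)=t_r\sum_i a_i\,\ord_p\mint\frac{t-\gamma_{i,1}^{-1}(\tau)}{t-\tau}\,d\mu^{\mathcal Y}_{H,\gamma_{i,2}}(t).
\]
Applying Proposition \ref{eq-II} to each factor, with $\gamma_1$ replaced by $\gamma_{i,1}^{-1}$, then gives
\[
\ord_p\Phi(c)=t_r\sum_i a_i\sum_{j=1}^{n_i}(-1)^j\bigl[g^{(i)}_j\bigr],
\]
where $(e^{(i)}_j)_{j=1}^{n_i}$ denotes the even geodesic from $v_\ast$ to $\gamma_{i,1}^{-1}(v_\ast)$ and $g^{(i)}_j\in\Gap$ is the element $g_{\gamma_{i,2},e^{(i)}_j}$ of Definition \ref{muniv}.

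Next I would compute the upper leg. The same base-point trick applied to the short exact sequence $0\to C_\E\to C_\V\to\Z\to 0$, using $v_\ast\in C_\V$ to lift $1\in\Z$, shows that the image of $\theta(c)$ under $\varphi$ is represented by $\sum_i a_i(\gamma_{i,1}^{-1}v_\ast-v_\ast)\otimes[\gamma_{i,2}]\in Z_1(\G,C_\V)$. Decomposing each divisor along the even geodesic yields the identity $\gamma_{i,1}^{-1}v_\ast-v_\ast=\varphi(-\sum_j(-1)^j e^{(i)}_j)$, so that $\theta(c)$ is represented in $Z_1(\G,C_\E)$ by
\[
\widetilde\theta(c):=-\sum_i a_i\sum_j(-1)^j\, e^{(i)}_j\otimes[\gamma_{i,2}].
\]
To identify $\lambda\theta(c)$ I would then invoke Shapiro's lemma: the isomorphism $H_1(\G,C_\E)\simeq H_1(\Gap,\Z)$ carries $\widetilde\theta(c)$ to the corresponding class in $\Gap^{\rm ab}_T$, and, as recorded in the proof of Lemma \ref{lemma-univ-I}, the Shapiro image of $\mu^{\mathcal Y}_H$ is exactly the natural cochain $g\mapsto[g]\in H$. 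By the compatibility of cap products with Shapiro, the pairing $\langle\widetilde\theta(c),\mu^{\mathcal Y}_H\rangle$ computes $\lambda\theta(c)\in H$; on the other hand, the definition of $\mu^{\mathcal Y}_H$ via the universal cochain gives directly
\[
\langle\widetilde\theta(c),\mu^{\mathcal Y}_H\rangle=-\sum_i a_i\sum_j(-1)^j\,\mu^{\mathcal Y}_{H,\gamma_{i,2}}(e^{(i)}_j)=-\sum_i a_i\sum_j(-1)^j[g^{(i)}_j].
\]
Combining the two legs yields $\ord_p\Phi(c)=-t_r\cdot\lambda\theta(c)$, which is the asserted commutativity.

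The main obstacle is the Shapiro compatibility in the last step: one must verify that the cap-product pairing between $H_1(\G,C_\E)$ and $H^1(\G,\cF_0(\E,H))$ is carried by the Shapiro isomorphisms to the evaluation pairing between $\Gap^{\rm ab}_T$ and $\Hom(\Gap^{\rm ab}_T,H)$. This is a routine chain-level check, but the multiple left/right module conventions adopted in the paper, together with the fact that the Shapiro image of $\mu^{\mathcal Y}_H$ becomes the natural cochain only after restriction to $\Gap$, call for some care in bookkeeping.
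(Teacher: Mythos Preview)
Your proposal is correct and follows essentially the same route as the paper: both compute the two legs explicitly on a generic $2$-cycle, reduce the lower leg to Proposition \ref{eq-II} after pulling $t_r$ through $\ord_p$, and lift along the even geodesic to represent $\theta(c)$ in $Z_1(\G,C_\E)$. The only difference is cosmetic: where the paper unwinds the Shapiro isomorphism by hand to obtain the explicit formula \eqref{eq-com-I} for $\lambda\theta(a)$, you instead observe that the Shapiro image of $\mu^{\mathcal Y}_H$ is the tautological cochain $g\mapsto[g]$ and then invoke compatibility of cap products with Shapiro to identify $\lambda\theta(c)$ with $\langle\widetilde\theta(c),\mu^{\mathcal Y}_H\rangle$ --- a slicker packaging of the same chain-level computation, and the bookkeeping caveat you flag is exactly what the paper absorbs into its direct unwinding.
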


\begin{proof} Thanks to relation \eqref{Heckepairing} and the obvious commutativity
\begin{equation} \label{commu}
\ord_p\circ t_r=t_r\circ\ord_p:T(\Q_p)=\Q_p^\times\otimes H\longrightarrow H,
\end{equation}
it suffices to show that the diagram
\[ \xymatrix@C=35pt@R=30pt{H_2(\G,\Z)\ar[r]^-\theta\ar[d]^-\partial & \ker(\pi_*)\ar[r]^-\lambda & H\ar^-{-\mathrm{id}}[d]\\
                           H_1(\G,\cD)\ar[r]^-\int & T(K_p)\ar[r]^-{\ord_p} & H} \]
is commutative. To compute the integration map $\int$, fix a harmonic (e.g., radial) system of representatives $\mathcal Y$ for $\Gap\backslash\Gamma$.

Let $a=\sum_ia_i\bigl[\gamma_{i,1}|\gamma_{i,2}\bigr]$, with $a_i\in\Z$, be an element of $Z_2(\G,\Z)$; it follows from Proposition \ref{eq-II} and the definitions of $\partial$ and of pairing \eqref{pairing} that
\begin{equation} \label{eq-comm-II}
\begin{split}
\ord_p\Big(\int\partial(a)\Big)=\ord_p\bigl(\langle\partial(a),\boldsymbol{\mu}\rangle\bigr)&=\sum_ia_i\ord_p\bigg(\mint\frac{t-\gamma_{i,1}^{-1}(\tau)}{t-\tau}d\mu_{H,\gamma_{i,2}}^{\mathcal Y}(t)\bigg)\\ &=\sum_i\sum_{j=1}^{n_i}(-1)^j a_i[g_{i,j}]
\end{split}
\end{equation}
where the $g_{i,j}\in\Gap$ are defined as follows:
\begin{itemize}
\item consider an even geodesic $\{e_1^{(i)},\dots,e_{n_i}^{(i)}\}$ from $v_\ast$ to $\gamma_{i,1}^{-1}(v_\ast)$;
\item define $g_{i,j}\in\Gap$ via the equation $\gamma_{e_j^{(i)}}\cdot\gamma_{i,2}=g_{i,j}\cdot\gamma_{e'_{i,\,j}}$ for some $e'_{i,\,j}\in\cl E^+$.
\end{itemize}
This accounts for half of the above diagram. As for the other half, the map $\lambda$ is just the restriction to $\ker(\pi_*)$ of the projection $H_1\bigl(\Gap,\Z\bigr)\rightarrow H$, whereas the explicit description of $\theta$ is somewhat more involved, since $\theta$ is the composition of the connecting homomorphism $H_2(\G,\Z)\rightarrow H_1(\G,C_\cl E)$ in the long exact sequence attached to \eqref{exseq} with the isomorphism $H_1(\G,C_\cl E)\simeq H_1\bigl(\Gamma_0^D(pM),\Z\bigr)$ provided by Shapiro's lemma. By unwinding definitions and writing down explicit expressions of these two maps at the level of chains, one obtains that
\begin{equation} \label{eq-com-I}
\lambda\bigl(\theta(a)\bigr)=\sum_i\sum_{e\in\E^+}\alpha_{e,i}\cdot a_i\otimes[g_{e,i}]
\end{equation}
where
\begin{itemize}
\item $E_i=\sum_{e\in\E^+}\alpha_{e,i}\cdot e\in C_\E$, with $\alpha_{e,i}\in\Z$, is such that $\varphi(E_i)=\gamma_{i,1}^{-1}(v_\ast)-v_\ast$;
\item $\gamma_e\cdot\gamma_{i,2}=g_{e,i}\cdot\gamma_{e'_i}$ for $e'_i\in\cl E^+$ and $g_{e,i}\in\Gamma_0^D(M)$.
\end{itemize}
Here recall from \eqref{exseq} that $\varphi(e):=t(e)-s(e)$. In our case, for all $i$ we may choose
\[ E_i:=\sum_{j=1}^{n}(-1)^{j-1}e_j^{(i)}\in\Z[\cl E^+]. \]
The claim of the proposition follows immediately by comparing \eqref{eq-comm-II} and \eqref{eq-com-I}. \end{proof}

We can now prove the main result of this section.

\begin{theorem} \label{lattice}
The submodule $L$ of $T(\Q_p)$ is a lattice of rank $2g$.
\end{theorem}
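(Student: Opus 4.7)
The strategy is to transport the question about $L$ to the algebraic right-hand side of the commutative diagram of Proposition \ref{prop-int-diag}, which gives the identity $\ord_p\circ\Phi=-t_r\circ\lambda\circ\theta$ of maps $H_2(\G,\Z)\to H$.

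The first step is to show that $\ord_p(L)$ has finite index in $H$, and hence is free of rank $2g$. By \eqref{lemma-image} the map $\theta$ surjects onto $\ker(\pi_*)$, and by Lemma \ref{lemma-incl} the homomorphism $\lambda$ embeds $\ker(\pi_*)$ into $H$ with finite cokernel. It then remains to verify that $t_r=T_r-r-1$ acts on $H$ as an isogeny, i.e.\ injectively and with finite cokernel. Extending scalars to $\C$, $H\otimes\C$ decomposes under the Hecke algebra as a direct sum of eigenspaces indexed by weight-two $p$-new normalized eigenforms $f$, on which $T_r$ acts by the eigenvalue $a_r(f)$. Since $r\nmid pDM$ and $r\geq 2$, the Ramanujan--Petersson bound $|a_r(f)|\leq 2\sqrt r<r+1$ forces every eigenvalue of $t_r$ to be non-zero; as $H$ is finitely generated and torsion-free, $t_r$ is therefore an isogeny. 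Combining the three ingredients, $\ord_p(L)=-t_r\bigl(\lambda(\ker\pi_*)\bigr)$ is of finite index in $H$, which in particular gives $\mathrm{rank}(L)\geq 2g$.

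The second and more delicate task is to establish the discreteness of $L$ in $T(\Q_p)$ and the matching upper bound $\mathrm{rank}(L)\leq 2g$. By the diagram, an element of $\ker(\ord_p|_L)$ is the image under $\Phi$ of some $c\in H_2(\G,\Z)$ with $-t_r\lambda\theta(c)=0$; up to the finite torsion introduced by $t_r$ and $\lambda$, this pins $c$ down to $\ker(\theta)$. By Mayer--Vietoris applied to the amalgam $\G=\Ga*_{\Gap}\hGa$ of \eqref{amalgam}, $\ker(\theta)$ equals the image in $H_2(\G,\Z)$ of $H_2(\Ga,\Z)\oplus H_2(\hGa,\Z)$, a finitely generated abelian group of rank at most $2$ (each summand being an extension of $H_2(X_0^D(M),\Z)=\Z$ by the torsion coming from elliptic points). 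Hence $\Phi(\ker\theta)$ is finitely generated of rank $\leq 2$ in $T(\Q_p)$.

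The main obstacle, and the only non-formal ingredient, will be showing that $\Phi(\ker\theta)$ is in fact torsion, so that $L$ is discrete of rank exactly $2g$. I expect this to follow from the $p$-new nature of the cocycle $\boldsymbol{\mu}$: by Lemma \ref{lemma-univ-I}, the class $\boldsymbol{\mu}_{H}^{\mathcal Y}$ lies in the image of $\varrho$, so that its corestrictions to $\Ga$ and $\hGa$ are cohomologous to zero; consequently, integration against $\boldsymbol{\mu}$ of $\partial$-images of $2$-cycles supported on either old subgroup can produce only Eisenstein-type contributions, which by the proof of Lemma \ref{Eis} are killed by the further application of $t_r$ built into Definition \ref{def-mu}. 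A careful unwinding of the pairing \eqref{pairing-3} along these restricted cycles should then deliver the required finiteness, completing the proof that $L$ is a lattice of rank $2g$.
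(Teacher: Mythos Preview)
Your first paragraph—showing that $\ord_p(L)$ has finite index in $H$ via the identity $\ord_p\circ\Phi=-t_r\circ\lambda\circ\theta$, together with the surjectivity of $\theta$ onto $\ker(\pi_*)$, the finite-cokernel embedding $\lambda$, and the Ramanujan bound for $t_r$—is exactly the paper's argument.

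The divergence is in your second step. The paper does not attempt to bound $\mathrm{rank}(L)$ from above or analyse $\Phi(\ker\theta)$ at all. Instead it invokes a general criterion from rigid-analytic geometry (\cite[\S 4.2]{Pa}): for a finitely generated subgroup $L$ of a split torus $T$ over $\Q_p$, $L$ is a lattice of rank $\dim T$ as soon as its image under $\ord_p$ is a full-rank lattice in $H\otimes\R$. Since $\ord_p(L)\subset H$ is automatically discrete in $H\otimes\R$ and you have just shown it has rank $2g$, this criterion closes the proof immediately—no separate discreteness argument is needed.

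Your proposed direct route has the gap you yourself flag. The heuristic that the restriction of $\boldsymbol\mu$ to $\Ga$ and $\hGa$ is ``Eisenstein and killed by $t_r$'' does not by itself control the multiplicative integral: the pairing \eqref{pairing-3} is $\C_p^\times$-valued, and triviality of a cohomology class upon restriction does not force the corresponding period to be torsion. A correct completion along your lines would actually require the splitting cochains $\rho$ and $\hat\rho$ of \S\ref{uniformization-section} (Propositions \ref{prop-splitting-I} and \ref{prop-splitting-II}), which show that $\log_p(\boldsymbol d)$ becomes a coboundary on $\Ga$ and on $\hGa$; combined with the vanishing of $\ord_p(\boldsymbol d)$ on $\ker\theta$, this gives $\Phi(\ker\theta)\subset\ker(\ord_p)\cap\ker(\log_p)$, which is torsion. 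But that machinery is developed only later and is far heavier than what Theorem \ref{lattice} needs; the paper deliberately avoids it here by citing the lattice criterion.
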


\begin{proof} According to \cite[\S 4.2]{Pa}, it suffices to show that the image of $L$ under the map
\[ \ord_p:T(\Q_p)\longrightarrow H\subset H\otimes\R \]
is a lattice of rank $2g$ in the $\R$-vector space $H\otimes\R$. Since $H_2(\G,\Z)$ is a finitely generated abelian group, the same is true of $\ord_p(L)$. Moreover, by construction, $H$ is a free discrete submodule of $H\otimes\R$, hence $\ord_p(L)$ is a free discrete submodule of $H\otimes\R$ as well. Now, extending our previous notation by linearity, observe that
\[ \mathrm{rank} _\Z\,\bigl(\ord_p(L)\bigr)=\dim_{\Q_p}\left(\ord_p\circ\int\circ\;\partial\bigl(H_2(\G,\Q_p)\bigr)\right). \]
By Proposition \ref{prop-int-diag}, we know that
\[ \ord_p\circ\int\circ\;\partial=-(t_r\circ\lambda\circ\theta)\otimes_\Z\Q_p. \]
The map $\lambda\otimes\Q_p$ is surjective by Lemma \ref{lemma-incl}, while so is $t_r\otimes\Q_p$ because the absolute values of the Hecke operator $T_r$ acting on $H$ are bounded from above by $2\sqrt{r}$. Combining this with \eqref{lemma-image}, we obtain that the image of $\ord\circ\int\circ\;\partial$ is $H\otimes\Q_p$, whose dimension over $\Q_p$ is $2g$. \end{proof}

\section{The $p$-adic uniformization} \label{uniformization-section}

\subsection{The main theorem}

The ultimate goal of this section is to prove Theorem \ref{GreenbergConj2}, which represents the main contribution of this article. We start by observing that, in analogy with \cite{Das}, Theorem \ref{GreenbergConj2} is a consequence of the following result.

\begin{theorem} \label{prop-I}
The equality of maps $\cl L_p^D\cdot\ord_p=\log_p$ holds on the lattice $L$.
\end{theorem}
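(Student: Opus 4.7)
The plan is to reduce the identity $\cl L_p^D\cdot\ord_p=\log_p$ on $L$ to the already available identity $\cl L_p^D\cdot\ord_X=\log_X$ on the supersingular module $X$ (Proposition \ref{prop-II}). The reduction will proceed by expressing both $\ord_p\circ\Phi$ and $\log_p\circ\Phi$ as cohomological maps from $H_2(\G,\Z)$ to $H\otimes\Z_p$ which admit a common factorization through $X$.

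For the $\ord_p$ side, essentially no new work is needed: Proposition \ref{prop-int-diag} already furnishes the formula $\ord_p\circ\Phi=-t_r\circ\lambda\circ\theta$, where $\theta$ and $\lambda$ are the connecting map of \eqref{ex-seq-iso} and the inclusion of Lemma \ref{lemma-incl}, respectively. Since $J_0^D(pM)^{p\text{-new}}$ has purely toric reduction at $p$ with character group isogenous (under the natural Hecke action) to $X$, the target $H\otimes\Z_p$ is naturally identified with $X^\ast\otimes\Z_p$, and under this identification $\lambda\circ\theta$ becomes $\ord_X$ composed with a canonical map $H_2(\G,\Z)\to X$.

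For the $\log_p$ side, the plan is to mimic Proposition \ref{prop-int-diag} by constructing a lift of the measure-valued cocycle $\boldsymbol{\mu}$ to a cocycle with values in a suitable bundle over $\PP^1(\Q_p)$, as advertised in the introduction. Such a lift allows one to replace the multiplicative integral $\mint f_d\,d\mu$ by an additive integral computing its $p$-adic logarithm, producing a formula $\log_p\circ\Phi=-t_r\circ\Psi$ for a suitable homomorphism $\Psi\colon H_2(\G,\Z)\to H\otimes\Z_p$. The Hida-theoretic construction of $\cl L_p^D$ in Section \ref{L} (particularly the map $t\mapsto t'$ of \eqref{eq-hida-2}, which is built from $\log_p$ on $\Z_p^\times$) would then be used to identify $\Psi$ with $\log_X$ composed with the same map $H_2(\G,\Z)\to X$ appearing in the $\ord_p$ computation. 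Proposition \ref{prop-II} finally yields the identity $\cl L_p^D\cdot\ord_p=\log_p$ on $L$.

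The main obstacle is the construction and analysis of the bundle lift of $\boldsymbol{\mu}$. The lift is not canonical, and the resulting cohomology class is well-defined only modulo an Eisenstein-type ambiguity which has to be shown to be annihilated by $t_r$, much as in Lemma \ref{Eis}. Hecke-equivariance of the lift must moreover be maintained throughout so as to allow the $\cl L$-invariant machinery of Section \ref{L} to connect $\Psi$ with $\log_X$. Dasgupta's treatment of the classical case $D=1$ in \cite{Das} provides a blueprint, but the absence of cusps on Shimura curves forces the bundle-lift construction to be carried out by purely cohomological means, and this is the genuinely new technical ingredient of the proof.
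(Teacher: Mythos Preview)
Your proposal identifies the right technical ingredient (the lift of $\boldsymbol{\mu}$ to a cocycle with values in measures on the bundle $\X=(\Z_p^2)'$ over $\PP^1(\Q_p)$), but the architecture you propose does not match how that lift is actually used, and this creates a genuine gap.

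The paper does \emph{not} reduce Theorem~\ref{prop-I} to Proposition~\ref{prop-II}. In fact the logical flow is the opposite of what you describe: Theorem~\ref{prop-I} is proved independently of Proposition~\ref{prop-II}, and both are then combined (in Proposition~\ref{prop-comm}) to deduce Theorem~\ref{GreenbergConj2}. Your plan to factor $\log_p\circ\Phi$ through $X$ via $\log_X$ and then invoke Proposition~\ref{prop-II} is essentially the content of Proposition~\ref{prop-comm}, whose proof \emph{uses} Theorem~\ref{prop-I} as input. So as stated your reduction is circular.

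More concretely, the step ``producing a formula $\log_p\circ\Phi=-t_r\circ\Psi$'' cannot be carried out as a direct analogue of Proposition~\ref{prop-int-diag}. The $\ord_p$ side works because $\ord_p$ of the integrand is locally constant on $\PP^1(\Q_p)$ and reduces to a finite combinatorial sum on the tree (Proposition~\ref{eq-II}); there is no such simplification for $\log_p$. What the bundle lift $\boldsymbol{\tilde\mu}\in H^1(\Ga,\cM_\X)$ actually yields is a $1$-cochain $\rho\in C^1(\Ga,H_p)$ splitting the $2$-cocycle $\log_p(d)$ restricted to $\Ga$, and similarly a cochain $\hat\rho$ splitting it on $\hGa$. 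The $\cl L$-invariant does not come from a factorization through $X$; it emerges from the mismatch $(\rho-\hat\rho)_{|\Gap}$, which a direct computation using $(1-U_p^2)$ and the Hida-theoretic definition of $\cl L_p^D$ identifies with $\cl L_p^D\cdot\mu_U$. The amalgam decomposition $\Gamma=\Ga\ast_{\Gap}\hGa$ and the associated Mayer--Vietoris sequence then glue these local splittings to show that the class $\boldsymbol d_{\cl L}=\log_p(\boldsymbol d)-\cl L_p^D\cdot\ord_p(\boldsymbol d)$ vanishes in $H^2(\Gamma,H_p)$, which is equivalent to Theorem~\ref{prop-I}. Your outline is missing this Mayer--Vietoris gluing step and the role of the pair $(\rho,\hat\rho)$; without them the bundle lift alone does not deliver a formula for $\log_p\circ\Phi$ on $H_2(\Gamma,\Z)$.
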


For the convenience of the reader, let us explain why Theorem \ref{prop-I} implies Theorem \ref{GreenbergConj2}; we follow \cite[pp. 449--450]{Das} closely. For any $\Z[W_\infty]$-module $M$ and sign $\epsilon\in\{\pm1\}$ we set $M_\epsilon:=M/(W_\infty-\epsilon)$. This applies in particular to the Hecke module $H$; define
\[ T_\epsilon:=\mathbb G_m\otimes_\Z H_\epsilon.\]
Since, as shown in the proof of Proposition \ref{prop-lattice}, the map $\Phi$ is equivariant for the action of $W_\infty$ and the cokernel of the canonical map $H\rightarrow H_+\oplus H_-$ is supported at $2$, it follows that there exists an isogeny of $2$-power degree
\[ T/L\longrightarrow T_+/L_+\oplus T_-/L_- \]
of rigid analytic tori over $\Q_p$.

Fix a sign $\epsilon$; we prove Theorem \ref{GreenbergConj2} by showing that $T_\epsilon/L_\epsilon$ admits a Hecke-equivariant isogeny over $K_p$ to the rigid analytic space associated with $J_0^D(pM)^{\text{$p$-{\rm new}}}$.

Notice that it follows from Lemma \ref{lemma-incl} that $\ker(\pi_*)\otimes\Q$ is canonically isomorphic to $H\otimes\Q$ and that there is a canonical injection
$\lambda_\epsilon:\ker(\pi_*)_\epsilon\hookrightarrow H_\epsilon$. Write
\[ \T:=\T_0^D(M)\subset\End(H\otimes\Q) \]
for the image in $\End(H\otimes\Q)$ of the Hecke algebra $\mathcal H(pM)$.

Now we freely use the notation of \S \ref{singular-subsec}; in particular, $X$ is the group of degree zero divisors on the set of supersingular points of $X_0^D(M)$ in characteristic $p$ and $X^*$ is its $\Z$-dual. Since $X\otimes\Q$, $X^*\otimes\Q$, $H_\epsilon\otimes\Q$ and $\ker(\pi_*)_\epsilon\otimes\Q$ are free $\T$-algebras of rank one (see \cite[Ch. 1]{helm}), we can choose Hecke-equivariant maps $\xi_\epsilon$ and $\eta_\epsilon$ making the diagram
\begin{equation} \label{dia-I}
\xymatrix@C=30pt@R=30pt{\ker(\pi_*)\ar[d]^-{\eta_\epsilon}\ar[r]^-{\lambda_\epsilon} & H_\epsilon\ar[r]^-{-t_r} & H_\epsilon\ar[d]^-{\xi_\epsilon} \\
X\ar[rr]^-{\ord_X} & & X^\ast}
\end{equation}
commute.

The map $\Phi$, being $W_\infty$-equivariant, restricts to a map $\Phi_\epsilon:H_2(\G,\Z)_\epsilon\rightarrow T_\epsilon(K_p)$. Consider the diagram
\begin{equation} \label{dia-II}
\xymatrix@C=30pt@R=30pt{H_2(\G,\Z)_\epsilon\ar[r]^-{\Phi_\epsilon}\ar[d]^-{\theta_\epsilon} & T_\epsilon(K_p)\ar[r]^-{\xi_\epsilon} & X^*\otimes K_p^\times\\
                        \ker(\pi_*)_\epsilon\ar[rr]^-{\eta_\epsilon} & & X\ar[u]^{j}}
\end{equation}
with the map $\theta$ having already made its appearance in the exact sequence \eqref{ex-seq-iso}.

In the statement below, let
\[ \ord_p,\log_p:X^*\otimes K_p^\times\longrightarrow X^*\otimes\Z_p \]
denote the usual valuation and logarithm maps.

\begin{proposition} \label{prop-comm}
Diagram \eqref{dia-II} commutes up to elements in $\ker(\log_p)\cap\ker(\ord_p)$.
\end{proposition}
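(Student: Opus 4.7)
The plan is to verify commutativity of diagram \eqref{dia-II} after separately applying the two maps $\ord_p,\log_p:X^\ast\otimes K_p^\times\to X^\ast\otimes\Z_p$; since the intersection $\ker(\ord_p)\cap\ker(\log_p)$ is exactly the ``torsion'' where the two compositions are allowed to disagree, establishing these two equalities is the same as the asserted commutativity. Both $\xi_\epsilon$ and $\eta_\epsilon$ were chosen to be Hecke-equivariant and commute with the ``tensor with $\Z$'' operations $\ord_p$ and $\log_p$ on $K_p^\times$, so I may freely move these maps past one another.

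For the $\ord_p$ side, I would first invoke Proposition \ref{prop-int-diag}, which says $\ord_p\circ\Phi=-t_r\circ\lambda\circ\theta$ on $H_2(\G,\Z)$; because every map here is $W_\infty$-equivariant (by $W_\infty$-equivariance of $\Phi$ used in the construction of $\Phi_\epsilon$, and by the analogous property of $\lambda$ recorded in Lemma \ref{lemma-incl}), this identity descends to $\epsilon$-parts:
\[ \ord_p\circ\Phi_\epsilon=-t_r\circ\lambda_\epsilon\circ\theta_\epsilon. \]
Applying $\xi_\epsilon$ and using that $\ord_p$ commutes with $\xi_\epsilon$, I get
\[ \ord_p\bigl(\xi_\epsilon(\Phi_\epsilon(a))\bigr)=\xi_\epsilon\bigl(-t_r\cdot\lambda_\epsilon(\theta_\epsilon(a))\bigr)=\ord_X\bigl(\eta_\epsilon(\theta_\epsilon(a))\bigr), \]
where the second equality uses the commutativity of diagram \eqref{dia-I}. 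Since $\ord_X=\ord_p\circ j$ by definition, this matches $\ord_p\bigl(j(\eta_\epsilon(\theta_\epsilon(a)))\bigr)$.

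For the $\log_p$ side, the main input is Theorem \ref{prop-I}, which asserts $\mathcal L_p^D\cdot\ord_p=\log_p$ on the lattice $L$. Applying this to $\Phi_\epsilon(a)\in L_\epsilon$ and pushing through $\xi_\epsilon$, and using that $\xi_\epsilon$ is Hecke-equivariant so that it commutes with the Hecke operator $\mathcal L_p^D$, I obtain
\[ \log_p\bigl(\xi_\epsilon(\Phi_\epsilon(a))\bigr)=\mathcal L_p^D\cdot\ord_p\bigl(\xi_\epsilon(\Phi_\epsilon(a))\bigr)=\mathcal L_p^D\cdot\ord_X\bigl(\eta_\epsilon(\theta_\epsilon(a))\bigr), \]
the last equality being what was just computed for the $\ord_p$ side. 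Finally, Proposition \ref{prop-II} gives $\mathcal L_p^D\cdot\ord_X=\log_X=\log_p\circ j$, which yields the desired identity $\log_p\bigl(\xi_\epsilon(\Phi_\epsilon(a))\bigr)=\log_p\bigl(j(\eta_\epsilon(\theta_\epsilon(a)))\bigr)$.

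The only real issue is managerial: I must check that every map on the page ($\Phi$, $\lambda$, $\theta$, $t_r$, and the chosen Hecke-equivariant lifts $\xi_\epsilon,\eta_\epsilon$) behaves correctly with respect to the $W_\infty$-action so that the $\epsilon$-components of the identities from Propositions \ref{prop-int-diag} and \ref{prop-II} hold exactly, and to confirm that $\xi_\epsilon$ interacts formally with $\ord_p$ and $\log_p$ (it does, because it is $\id_{K_p^\times}$ on the second factor of $H_\epsilon\otimes K_p^\times$). Granting these compatibilities, the genuine arithmetic content of the proposition is concentrated in Theorem \ref{prop-I} and Proposition \ref{prop-II}; the rest is a diagram chase.
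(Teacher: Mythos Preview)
Your proposal is correct and follows essentially the same approach as the paper: first establish commutativity after applying $\ord_p$ via Proposition \ref{prop-int-diag}, diagram \eqref{dia-I}, and the identity $\ord_X=\ord_p\circ j$, then deduce commutativity after $\log_p$ from Theorem \ref{prop-I} and Proposition \ref{prop-II} together with Hecke-equivariance. The paper's proof is slightly terser on the $\log_p$ side (it simply invokes Theorem \ref{prop-I} and Proposition \ref{prop-II} without spelling out the chain of equalities you wrote), but the logical content is the same.
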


\begin{proof} To begin with, the map $\ord_p\circ\Phi_\epsilon$ is equal to $-t_r\circ\lambda_\epsilon\circ\theta_\epsilon$ by Proposition \ref{prop-int-diag}. Thus
\[ \ord_p\circ\xi_\epsilon\circ\Phi_\epsilon=-\xi_\epsilon\circ t_r\circ\lambda_\epsilon\circ\theta_\epsilon=\ord_X\circ\eta_\epsilon\circ\theta_\epsilon=\ord_p\circ
j\circ\eta_\epsilon\circ\theta_\epsilon, \]
where the first equality follows from the commutativity of $\ord_p$ and $\xi_\epsilon$, the second is a consequence of the commutativity of diagram \eqref{dia-I} and the third follows from the definition of $\ord_X$. Hence diagram \eqref{dia-II} commutes up to elements in $\ker(\ord_p)$. Since the maps in diagram \eqref{dia-II} are Hecke equivariant, Theorem \ref{prop-I} and Proposition \ref{prop-II} imply that diagram \eqref{dia-II} is commutative also up to elements in $\ker(\log_p)$, from which the result follows. \end{proof}

Notice that $\ker(\log_p)\cap\ker(\ord_p)$ is a finite subgroup of $X^*\otimes K_p^\times$ whose order is supported at the prime divisors of $p-1$ if $p>2$ (at $2$ if $p=2$). We are ready to prove Theorem \ref{GreenbergConj2}, which we reformulate below in terms of the $\epsilon$-components.

\begin{theorem} \label{main-thm-epsilon}
There is a Hecke-equivariant isogeny over $K_p$ between $T_\epsilon/\pi_\epsilon(L)$ and the rigid analytic space associated with $J_0^D(pM)^{\text{$p$-{\rm new}}}$ whose degree is divisible only by $2$ and the primes dividing the order of $\ker(\log_p)\cap\ker(\ord_p)$ and ${\rm coker}(\eta_\epsilon\otimes{\rm id})$ for $\epsilon=\pm$.
\end{theorem}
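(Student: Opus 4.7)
The plan is to construct the desired isogeny by bootstrapping the Hecke-equivariant map $\xi_\epsilon : H_\epsilon \to X^*$ of diagram \eqref{dia-I} up to the torus level, and then to recognize the target as a uniformization of $J_0^D(pM)^{p\text{-new}}$ via the short exact sequence \eqref{unif-I}. Concretely, applying $\cdot \otimes_\Z \mathbb{G}_m$ to $\xi_\epsilon$ yields a Hecke-equivariant morphism of rigid analytic tori
\[ \tilde{\xi}_\epsilon : T_\epsilon = \mathbb{G}_m \otimes H_\epsilon \longrightarrow X^* \otimes \mathbb{G}_m, \]
and composing with the quotient projection in \eqref{unif-I} produces the candidate map
\[ \Psi_\epsilon : T_\epsilon \longrightarrow (X^* \otimes K_p^\times)/j(X) \hookrightarrow J_0^D(pM)^{p\text{-new}}(K_p). \]

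The central step is to show that $\Psi_\epsilon$ descends, modulo a finite subgroup, through the quotient $T_\epsilon/\pi_\epsilon(L)$. Since $\pi_\epsilon(L)$ is generated by the image of $\Phi_\epsilon = \int \circ \, \partial$ restricted to the $\epsilon$-part, Proposition \ref{prop-comm} gives exactly the compatibility needed:
\[ \tilde{\xi}_\epsilon \circ \Phi_\epsilon \equiv j \circ \eta_\epsilon \circ \theta_\epsilon \pmod{\ker(\log_p) \cap \ker(\ord_p)}. \]
Since $\eta_\epsilon \circ \theta_\epsilon$ lands in $X$, this shows $\tilde{\xi}_\epsilon\bigl(\pi_\epsilon(L)\bigr) \subset j(X)$ up to a finite subgroup whose exponent divides $p-1$ (or $2$ if $p=2$). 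Thus $\Psi_\epsilon$ factors through $T_\epsilon/\pi_\epsilon(L)$ with finite kernel controlled by $\ker(\log_p) \cap \ker(\ord_p)$.

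To upgrade this to an isogeny, I would compare dimensions and compute cokernels. Both source and target are rigid analytic tori of dimension $g$ (the source by Theorem \ref{lattice}, the target by \eqref{unif-I} combined with the rank-one freeness of $X$ and $X^*$ over $\T \otimes \Q$). The maps $\lambda_\epsilon \otimes \Q$, $\eta_\epsilon \otimes \Q$ and $\xi_\epsilon \otimes \Q$ are all isomorphisms of free rank-one $\T \otimes \Q$-modules, as used to set up diagram \eqref{dia-I}; therefore $\tilde{\xi}_\epsilon$ and the quotient by $j(X)$ combine to a map that is an isomorphism on cocharacter lattices after tensoring with $\Q$, which together with the rank statement $\mathrm{rank}_\Z L = 2g$ of Theorem \ref{lattice} forces $\Psi_\epsilon$ to be a surjection modulo torsion, hence an isogeny.

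Finally, for the degree bound, I would trace the finite discrepancies through diagram \eqref{dia-II}: the factor $2$ comes from the splitting $T/L \to T_+/L_+ \oplus T_-/L_-$ already produced above via the $2$-power cokernel of $H \to H_+ \oplus H_-$; the primes dividing $|\ker(\log_p) \cap \ker(\ord_p)|$ enter through the error in Proposition \ref{prop-comm}; and the primes dividing $|\mathrm{coker}(\eta_\epsilon \otimes \mathrm{id})|$ enter because $\eta_\epsilon$ need not be surjective, so the image of $\Psi_\epsilon$ misses a corresponding finite-index subgroup of $(X^* \otimes K_p^\times)/j(X)$. The main obstacle in executing this plan is not the qualitative existence of $\Psi_\epsilon$ but the precise accounting of its degree: one must show that no other primes enter the kernel or cokernel, which amounts to a careful snake-lemma chase through \eqref{unif-I} and diagrams \eqref{dia-I}, \eqref{dia-II}, using that every arrow in sight is Hecke equivariant so that Hecke equivariance of the final isogeny is automatic.
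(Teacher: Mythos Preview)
Your proposal is correct and follows essentially the same route as the paper: tensor $\xi_\epsilon$ up to the torus, use Proposition \ref{prop-comm} to match $\tilde{\xi}_\epsilon\circ\Phi_\epsilon$ with $j\circ\eta_\epsilon\circ\theta_\epsilon$ modulo $\ker(\log_p)\cap\ker(\ord_p)$, and then read off the isogeny from the uniformization sequence \eqref{unif-I}. The paper's proof is in fact considerably terser than yours and simply asserts that $\xi_\epsilon$ induces the desired isogeny; your more explicit dimension count and degree accounting fills in what the paper leaves to the reader. One small correction: the factor $2$ in the degree bound does not arise from the splitting $T/L\to T_+/L_+\oplus T_-/L_-$ (that splitting is used to deduce Theorem \ref{GreenbergConj2} from the $\epsilon$-version, not in the proof of the $\epsilon$-version itself), but rather is part of the support of $\ker(\log_p)\cap\ker(\ord_p)$ when $p=2$.
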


\begin{proof} Recall exact sequence \eqref{unif-I}, which gives a rigid-analytic uniformization of $J_0^D(M)^{\text{$p$-new}}$ in terms of $X$ and the map $j$. Since $\ker(\log_p)\cap\ker(\ord_p)$ is finite, Proposition \ref{prop-comm} shows that the map $\xi_\epsilon$ induces an isogeny
\[ T_\epsilon/\pi_\epsilon(L)\longrightarrow J_0^D(M)^{\text{$p$-new}} \]
which is defined over $K_p$. The Hecke-equivariance is immediate and the statement on the degree of the isogeny follows from the bounds given above. \end{proof}

As already mentioned, Theorem \ref{main-thm-epsilon} immediately implies Theorem \ref{GreenbergConj2} in the introduction. Furthermore, thanks to the Hecke-equivariance of the isogeny in the theorem above, a proof of Greenberg's conjecture \cite[Conjecture 2]{Gr} is also a consequence of Theorem \ref{main-thm-epsilon}: see \S \ref{greenberg-subsec} for details.

\bigskip

The rest of the article will be devoted to proving Theorem \ref{prop-I}.

\subsection{A lifting theorem for measure-valued cohomology classes} \label{sec-meas}

As a piece of notation, in the sequel write $\cM:=\cM(H\otimes \Z_p)$ for the $\Z_p$-module of measures on $\PP^1(\Q_p)$ (of arbitrary total mass) with values in $H\otimes\Z_p$.

Define $\Y:=\Z_p^2$; we view the elements of $\Y$ as column vectors $\bigl(\begin{smallmatrix}x\\y\end{smallmatrix}\bigr)$ -- sometimes written as rows only for notational convenience -- and let the semigroup $\M_2(\Z_p)$ act on $\Y$ by left multiplication, so that
\[ \gamma\cdot\xi:=(ax+by,cx+dy) \]
for every $\gamma=\smallmat abcd\in\M_2(\Z_p)$ and every $\xi=\bigl(\begin{smallmatrix}x\\y\end{smallmatrix}\bigr)\in\Y$.

Similarly as before, write $\cM_{\Y}$ for the $\Z_p$-module of measures on $\Y$ with values in $H\otimes \Z_p$. The above action can be used to define a left action of $\M_2(\Z_p)$ on $\cM_{\Y}$, like the one introduced in \S \ref{BT}. If $\nu\in\cM_{\Y}$ we let $\Supp(\nu)$ denote the support of $\nu$, and we say that $\nu$ is supported on a compact open subset $\U$ of $\Y$ if $\Supp(\nu)\subset\U$. For any compact open subset $\U$ of $\Y$ we denote by $\cM_\U$ the $\Z_p$-submodule of $\cM_{\Y}$ consisting of those measures supported on $\U$. It is immediate to check that if $\gamma\in M_2(\Z_p)$ and $\nu\in\cM_\U$ then $\gamma\cdot\nu\in\cM_{\gamma\U}$.

Let $\X:=(\Z_p^2)'$ denote the set of primitive vectors in $\Y$, that is, the set of elements $(a,b)\in\Y$ such that $a$ and $b$ are not both divisible by $p$. Again, write $\cM_{\X}$ for the $\Z_p$-module of $H\otimes \Z_p$-valued measures on $\X$. We omit the proof of the following

\begin{lemma} \label{stable-kernel-lemma}
The kernel of the canonical projection $q:\cM_{\Y}\rightarrow\cM_{\X}$ is preserved by the action of $\M_2(\Z_p)$.
\end{lemma}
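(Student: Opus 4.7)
The plan is to identify $\ker(q)$ concretely as a support condition and then check that this support condition is stable under the $\M_2(\Z_p)$-action.

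First, I would observe that the canonical projection $q:\cM_\Y\to\cM_\X$ is nothing but the restriction-of-measures map. Hence a measure $\nu\in\cM_\Y$ lies in $\ker(q)$ precisely when $\nu(V)=0$ for every compact open $V\subseteq\X$, equivalently when $\Supp(\nu)\subseteq\Y\setminus\X$. Next, by the very definition of primitive vector, $\Y\setminus\X=p\Z_p\times p\Z_p=p\Y$, so
\[
\ker(q)=\bigl\{\nu\in\cM_\Y\;\big|\;\Supp(\nu)\subseteq p\Y\bigr\}=\cM_{p\Y}.
\]

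Now fix $\gamma\in\M_2(\Z_p)$ and $\nu\in\cM_{p\Y}$. I would use the definition of the action, $(\gamma\cdot\nu)(U):=\nu(\gamma^{-1}U)$, where $\gamma^{-1}U$ denotes the set-theoretic preimage (so no invertibility of $\gamma$ is needed). For any compact open $U\subseteq\X$, I want to show $\nu(\gamma^{-1}U)=0$, and for this it suffices to check that $\gamma^{-1}U\cap p\Y=\emptyset$. But if $x\in p\Y$, say $x=py$ with $y\in\Y$, then $\gamma x=p(\gamma y)\in p\Y$, so $\gamma x\notin\X\supseteq U$; hence $x\notin\gamma^{-1}U$. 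This shows $\gamma\cdot\nu$ vanishes on every compact open subset of $\X$, i.e.\ $\gamma\cdot\nu\in\ker(q)$, as desired.

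The only step worth flagging is the mild care needed because $\gamma$ may be singular; working with the preimage $\gamma^{-1}U$ rather than an image $\gamma(U)$ handles this uniformly. Beyond that, the statement is a direct consequence of the elementary inclusion $\gamma(p\Y)\subseteq p\Y$ for $\gamma\in\M_2(\Z_p)$, so no substantial obstacle is expected.
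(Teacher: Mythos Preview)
Your argument is correct. The paper explicitly omits the proof of this lemma, so there is no approach to compare against; your identification of $\ker(q)$ with $\cM_{p\Y}$ together with the elementary inclusion $\gamma(p\Y)\subseteq p\Y$ for $\gamma\in\M_2(\Z_p)$ is exactly the kind of routine verification the authors had in mind.
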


As a consequence of Lemma \ref{stable-kernel-lemma}, one can define a left action of $\M_2(\Z_p)$ on $\cM_\X$ by the formula
\[ \gamma\cdot\nu:=q\big(\gamma\cdot i(\nu)\big)=q(\gamma\cdot\tilde\nu) \]
for every $\tilde\nu$ such that $q(\tilde\nu)=\nu$.

Now consider the map
\[ \pi:\X\longrightarrow\PP^1(\Q_p),\qquad (a,b)\longmapsto [(a,b)], \]
whose fibers are principal homogeneous spaces for $\Z_p^\times$. The action of $\M_2(\Z_p)$ on $\Y$ restricts to an action of $\GL_2(\Z_p)$ on $\X$ and $\pi$ is a homomorphism of left $\GL_2(\Z_p)$-modules, where the left action of $\GL_2(\Z_p)$ on $\PP^1(\Q_p)$ is by fractional linear transformations. The fibration $\pi$ induces by push-forward a map $\pi_*:\cM_\X\rightarrow\cM$ where $\pi_*(\nu):=\nu\bigl(\pi^{-1}(U)\bigr)$ for every $\nu\in\cM_\X$ and every compact open subset $U$ of $\PP^1(\Q_p)$. With a slight abuse of notation, we then get a map $\pi_*$ from $Z^1\bigl(\Gamma_0^D(M),\cM_\X\bigr)$ to $Z^1\bigl(\Gamma_0^D(M),\cM\bigr)$ by the rule $\pi_*(\nu)_\g:=\pi_*(\nu_\g)$, and finally a map
\[\pi_*:H^1\bigl(\Gamma_0^D(M),\cM_\X\bigr)\longrightarrow H^1\bigl(\Gamma_0^D(M),\cM\bigr). \]
Recall the class $\boldsymbol{\mu}\in H^1(\Gamma,\mathcal M_0(H))$ defined in \S \ref{sec-def-meas}, which we can naturally regard now as an element of $H^1(\Gamma,\mathcal M)$.

\begin{theorem} \label{prop-lifting}
There exists $\boldsymbol{\tilde{\mu}}\in H^1\bigl(\Gamma_0^D(M),\cM_\X\bigr)$ such that $\pi_*(\boldsymbol{\tilde\mu})$ is the restriction of $\boldsymbol{\mu}$ to $\Gamma_0^D(M)$.
\end{theorem}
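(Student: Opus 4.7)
The plan is to construct $\boldsymbol{\tilde\mu}$ starting from a naive set-theoretic lift and then correcting it to a genuine $1$-cocycle. The key preliminary observation is that $\pi:\X\to\PP^1(\Q_p)$ is a $\Z_p^\times$-torsor under the diagonal scalar action, and since its base splits as a disjoint union of two clopen balls $\PP^1(\Q_p)=U_{e_\ast}\sqcup U_{\bar e_\ast}$, this torsor admits a continuous global section $s$: for instance, $s([1:z])=(1,z)$ on $U_{e_\ast}$ and $s([pz:1])=(pz,1)$ on $U_{\bar e_\ast}$. Such $s$ induces a $\Z_p$-linear splitting $s_\ast:\cM\hookrightarrow\cM_{\X}$ with $\pi_\ast\circ s_\ast=\mathrm{id}$; crucially, $s_\ast$ is \emph{not} $\Gamma_0^D(M)$-equivariant because the $\Gamma_0^D(M)$-action (through $\iota_p$) twists the $\Z_p^\times$-fibers.

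Set $\tilde\mu^0_\gamma:=s_\ast(\mu_\gamma)$ for $\gamma\in\Gamma_0^D(M)$, producing a $1$-cochain $\tilde\mu^0\in C^1(\Gamma_0^D(M),\cM_{\X})$ which by construction satisfies $\pi_\ast(\tilde\mu^0_\gamma)=\mu_\gamma$. The failure of $\tilde\mu^0$ to be a cocycle is measured by $\delta\tilde\mu^0\in C^2(\Gamma_0^D(M),\cM_{\X})$; since $\pi_\ast$ commutes with $\delta$ and $\boldsymbol\mu$ is a cocycle, $\pi_\ast(\delta\tilde\mu^0)=0$, so $\delta\tilde\mu^0$ actually lives in $C^2(\Gamma_0^D(M),\ker(\pi_\ast))$. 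Consequently the existence of the sought-after lift $\boldsymbol{\tilde\mu}$ reduces to the vanishing of the class $[\delta\tilde\mu^0]\in H^2(\Gamma_0^D(M),\ker(\pi_\ast))$; equivalently, in the long exact sequence attached to $0\to\ker(\pi_\ast)\to\cM_{\X}\xrightarrow{\pi_\ast}\cM\to 0$, the restriction of $\boldsymbol\mu$ to $\Gamma_0^D(M)$ must lie in the image of $\pi_\ast:H^1(\Gamma_0^D(M),\cM_{\X})\to H^1(\Gamma_0^D(M),\cM)$.

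To establish this vanishing I would combine two structural features. First, the scalar $\Z_p^\times$-action on $\X$ commutes with the $\Gamma_0^D(M)$-action, so there is a $\Gamma_0^D(M)$-equivariant isotypic decomposition $\cM_{\X}=\bigoplus_\chi\cM_{\X}^\chi$ over continuous characters $\chi$ of $\Z_p^\times$, with $\ker(\pi_\ast)$ being the sum over non-trivial $\chi$; this lets one analyse the obstruction one character at a time. Second, the cocycle $\boldsymbol\mu=t_r\cdot\boldsymbol\mu^{\mathcal Y}$ is built from the harmonic cocycle $\mu^{\mathcal Y}_H$ of Definition \ref{muniv}, whose explicit values $\pi_H([g_{\gamma,e}])$ on even edges $e\in\E^+$ satisfy harmonicity around each vertex of the Bruhat--Tits tree. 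Guided by this, I would define the value of $\tilde\mu_\gamma$ on basic opens of $\X$ covering $U_e$ by the same recipe as $\mu^{\mathcal Y}_H$, lifted via distinguished primitive-vector representatives chosen consistently with $s$ and the transitive $\GL_2(\Z_p)$-action on $\X$; a suitable modification of $\tilde\mu^0$ by such an explicit cochain should produce the correcting cochain.

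The main obstacle will be verifying that these choices glue into a bona fide measure on $\X$ and that the modified cochain satisfies the cocycle identity $\tilde\mu_{\gamma_1\gamma_2}=\tilde\mu_{\gamma_1}+\gamma_1\cdot\tilde\mu_{\gamma_2}$. The difficulty is that the cocycle identity at the level of $\X$ is genuinely stronger than the one for $\boldsymbol\mu$ on $\PP^1(\Q_p)$, because the $\Gamma_0^D(M)$-action twists fibers non-trivially; absorbing the discrepancy into a coboundary with values in $\ker(\pi_\ast)$ is the crux of the argument. I expect to achieve this by exploiting the harmonicity of $\mu^{\mathcal Y}_H$ around each vertex together with the amalgamated product decomposition $\Gamma=\Gamma_0^D(M)\ast_{\Gamma_0^D(pM)}\hat{\Gamma}_0^D(M)$, which reduces the required identity on $\Gamma_0^D(M)$ to relations that hold explicitly at the level of harmonic cocycles on the tree, and hence can be verified by direct computation on the combinatorial data defining $\mu^{\mathcal Y}_H$.
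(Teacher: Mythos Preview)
Your reformulation is correct: lifting $\boldsymbol\mu|_{\Gamma_0^D(M)}$ is equivalent to the vanishing of the obstruction class in $H^2\bigl(\Gamma_0^D(M),\ker(\pi_\ast)\bigr)$ coming from the short exact sequence $0\to\ker(\pi_\ast)\to\cM_\X\to\cM\to0$. But from that point on the proposal is only a plan, and the tools you invoke do not address the actual obstruction. Harmonicity of $\mu_H^{\mathcal Y}$ is a condition on sums over edges adjacent to a vertex of $\mathcal T$; it says nothing about how to spread a value over the $\Z_p^\times$-fiber of $\pi$, which is precisely what the obstruction measures. The amalgamated product $\Gamma=\Gamma_0^D(M)\ast_{\Gamma_0^D(pM)}\hat\Gamma_0^D(M)$ governs the cohomology of $\Gamma$, not of $\Gamma_0^D(M)$, so it gives no Mayer--Vietoris-type reduction here. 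And the ``isotypic decomposition'' over characters of $\Z_p^\times$ is problematic as stated: over $\Z_p$ you only get a splitting by tame characters of $(\Z/p\Z)^\times$, while the pro-$p$ part $1+p\Z_p$ does not decompose into a direct sum of character spaces; even granting a refined formulation, you would still have to compute the obstruction in each piece, and no mechanism for doing so is offered.

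The paper takes a completely different route and in fact proves the stronger statement that $\pi_\ast$ is surjective on all of $H^1$. The idea is to reinterpret both sides via Shapiro's lemma as inverse limits over the $p$-power level tower: writing $\Gamma_r:=\Gamma_1^D(p^r)\cap\Gamma_0^D(M)$, one has
\[
H^1\bigl(\Gamma_0^D(M),\cM_\X\bigr)\simeq\invlim_r H^1(\Gamma_r,H\otimes\Z_p),\qquad
H^1\bigl(\Gamma_0^D(M),\cM\bigr)\simeq\invlim_r H^1\bigl(\Gamma_0^D(p^rM),H\otimes\Z_p\bigr),
\]
because $\X_r$ and $\PP^1(\Z/p^r\Z)$ are transitive $\Gamma_0^D(M)$-sets with stabilizers $\Gamma_r$ and $\Gamma_0^D(p^rM)$ respectively. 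Under these identifications $\pi_\ast$ becomes the inverse limit of corestrictions $\mathrm{cor}:H^1(\Gamma_r,\,\cdot\,)\to H^1(\Gamma_0^D(p^rM),\,\cdot\,)$. By Poincar\'e duality these cohomology groups are $\mathrm{Ta}_p(J_r)\otimes H$ and $\mathrm{Ta}_p\bigl(J_0^D(p^rM)\bigr)\otimes H$, and the corestriction is the map induced by quotienting by the diamond operators; in the inverse limit this is exactly the surjection $\mathrm{Ta}_p(J_\infty)\twoheadrightarrow\mathrm{Ta}_p(J_\infty)/I_\Lambda\cdot\mathrm{Ta}_p(J_\infty)$, where $I_\Lambda$ is the augmentation ideal of the Iwasawa algebra. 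So the lifting is obtained not by killing an obstruction class but by recognizing the map as reduction modulo $I_\Lambda$ in a Hida-style tower; this is the input your argument is missing.
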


In order to prove Theorem \ref{prop-lifting}, for every integer $r\geq1$ let $\X_r$ be the set of primitive vectors in $(\Z_p/p^r\Z_p)^2$, again endowed with the natural left action of $\GL_2(\Z_p)$. One immediately verifies that $\X\simeq\invlim\X_r$ with respect to the canonical projection maps.

For $r\geq1$ let $\Gamma_r:=\Gamma^D_1(p^r)\cap\Ga$, which is a congruence subgroup of $\Gamma^D_0(p^rM)$.

\begin{proposition} \label{isos}
For every $r\geq1$ set
\[ \tilde{U}_r:=\bigl\{(x,y)\in\X\mid x\in1+p^r\Z_p, y\in p^r\Z_p\bigr\}\subset\X \]
and
\[ U_r:=\bigl\{[x:y]\in\PP^1(\Q_p)\mid x\in1+p^r\Z_p, y\in p^r\Z_p\bigr\}\subset\PP^1(\Q_p). \]
The maps
\begin{itemize}
\item[(i)] $H^1\bigl(\Gamma_0^D(M),\cM_\X\bigr)\overset{\simeq}{\longrightarrow}\invlim_rH^1(\Gamma_r,H\otimes \Z_p),\qquad\tilde{\mu}\mapsto\{\tilde{\mu}_r\}_{r\geq1},\quad\tilde{\mu}_r(\g):=\tilde{\mu}_\g(\tilde{U}_r)$,
\item[(ii)] $H^1\bigl(\Gamma_0^D(M),\cM\bigr)\overset{\simeq}{\longrightarrow}\invlim_rH^1\bigl(\Gamma^D_0(p^rM),H\otimes \Z_p\bigr),\qquad\mu\mapsto\{\mu_r\}_{r\geq1},\quad\mu_r(\g ):= \mu_\g(U_r)$
\end{itemize}
are isomorphisms.
\end{proposition}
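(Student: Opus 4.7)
The plan is to identify $\cM$ and $\cM_\X$ with inverse limits of finite-level induced modules and to apply Shapiro's lemma together with a Milnor $\lim^1$ argument. For part (ii), the reduction maps $\PP^1(\Q_p) = \PP^1(\Z_p) \twoheadrightarrow \PP^1(\Z/p^r\Z)$ identify an $(H \otimes \Z_p)$-valued measure on $\PP^1(\Q_p)$ with a compatible system of formal linear combinations, so that
\[ \cM \;\cong\; \invlim_r N_r, \qquad N_r := (H\otimes\Z_p)\bigl[\PP^1(\Z/p^r\Z)\bigr], \]
where the transition $N_{r+1} \to N_r$ sums coefficients over the fibres of the projection $\PP^1(\Z/p^{r+1}\Z) \twoheadrightarrow \PP^1(\Z/p^r\Z)$ (each of cardinality $p$ for $r\geq 1$). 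Analogously, for part (i), $\cM_\X \cong \invlim_r \tilde N_r$ with $\tilde N_r := (H\otimes\Z_p)[\X_r]$ and fibres of cardinality $p^2$.

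The second step is to realize each $N_r$ (respectively $\tilde N_r$) as an induced module. By strong approximation for $B^\times_1$, valid because $B$ is indefinite, the image of $\Gamma_0^D(M)$ under $\iota_p$ is dense in $\SL_2(\Z_p)$ and hence surjects onto $\SL_2(\Z/p^r\Z)$ for every $r$; in particular $\Gamma_0^D(M)$ acts transitively on both $\PP^1(\Z/p^r\Z)$ and $\X_r$. A direct computation shows that the stabilizer in $\Gamma_0^D(M)$ of $[1:0] \in \PP^1(\Z/p^r\Z)$ is $\Gamma_0^D(p^rM)$, while the stabilizer of $(1,0) \in \X_r$ is $\Gamma_r$. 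Since $\Gamma_0^D(M)$ acts trivially on $H$, this yields isomorphisms of $\Gamma_0^D(M)$-modules
\[ N_r \simeq \mathrm{Ind}_{\Gamma_0^D(p^rM)}^{\Gamma_0^D(M)}(H\otimes\Z_p), \qquad \tilde N_r \simeq \mathrm{Ind}_{\Gamma_r}^{\Gamma_0^D(M)}(H\otimes\Z_p), \]
and Shapiro's lemma gives canonical isomorphisms $H^i(\Gamma_0^D(M), N_r) \simeq H^i(\Gamma_0^D(p^rM), H\otimes\Z_p)$ and $H^i(\Gamma_0^D(M), \tilde N_r) \simeq H^i(\Gamma_r, H\otimes\Z_p)$. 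Tracing through the Shapiro isomorphism on cocycles---which picks out the coefficient of the base point---and noting that $U_r$ and $\tilde U_r$ are precisely the preimages of $[1:0]$ and $(1,0)$ under the reduction maps, one recovers the explicit formulae $\mu_r(\gamma) = \mu_\gamma(U_r)$ and $\tilde\mu_r(\gamma) = \tilde\mu_\gamma(\tilde U_r)$ given in the statement.

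It then remains to commute $H^1$ with the inverse limit. The Milnor six-term sequence reduces this to the vanishing of $\lim^1 H^0(\Gamma_0^D(M), N_r)$ and its $\tilde N_r$ analogue. By Shapiro and triviality of the action, $H^0(\Gamma_0^D(M), N_r) \cong H \otimes \Z_p$ for every $r$, and an explicit check on constant elements shows that the transition map is multiplication by the fibre size $p$ (respectively $p^2$ for $\tilde N_r$). Because $H \otimes \Z_p$ is a finitely generated free $\Z_p$-module, the $p$-adically convergent series $a_r := \sum_{k\geq 0} p^k b_{r+k}$ solves $a_r - p\, a_{r+1} = b_r$ for any prescribed $(b_r) \in \prod_r (H \otimes \Z_p)$, so the shift minus identity is surjective on the product and $\lim^1 = 0$. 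I expect the main subtlety to lie not in any single step but in reconciling carefully the abstract Shapiro isomorphism with the concrete formula $\mu \mapsto \mu_\gamma(U_r)$; once that matching is in place the rest is routine bookkeeping.
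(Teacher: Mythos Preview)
Your proof is correct and follows essentially the same route as the paper's: identify $\cM_\X$ (resp.\ $\cM$) as an inverse limit of finite-level modules $\cM_{\X_r}$, use strong approximation to realise these as coinduced modules from $\Gamma_r$ (resp.\ $\Gamma_0^D(p^rM)$), apply Shapiro's lemma, and then commute $H^1$ with the inverse limit. The only substantive difference is in this last step: the paper invokes \cite[Corollary~2.3.5]{NSW} together with the finite generation of $\Gamma_0^D(M)$ as a black box, whereas you unpack the Milnor $\varprojlim^1$ sequence and verify directly that $\varprojlim^1 H^0$ vanishes by computing the transition maps on invariants as multiplication by $p$ (resp.\ $p^2$) and exploiting the $p$-adic completeness of $H\otimes\Z_p$. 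Your argument is more self-contained and makes the role of finite generation of $\Gamma_0^D(M)$ explicit (it is what guarantees a resolution of finite type, hence the Milnor sequence); the paper's citation is terser but hides exactly this point.
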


\begin{proof} We provide details for (i) only, as (ii) is completely analogous. For all $r\geq1$ the action of $\GL_2(\Z_p)$ on $\X_r$ is transitive and the stabilizer of $(1,0)$ is the subgroup $\Sigma(p^r)$ consisting of the matrices $\smallmat abcd$ with $a\equiv1\pmod{p^r}$ and $c\equiv0\pmod{p^r}$. Thus the map $\smallmat abcd \mapsto(a,c)$ describes a bijection between the set of classes $\GL_2(\Z_p)/\Sigma(p^r)$ and $\X_r$.

Let $\cM_{\X_r}$ be the $\Z_p$-module of $H\otimes \Z_p$-valued measures on $\X_r$. Since $\X_r$ is a finite set, the module $\cM_{\X_r}$ identifies canonically with the $\Z_p$-module of $H\otimes \Z_p$-valued functions on $\X_r$. Then we have a canonical isomorphism of $\GL_2(\Z_p)$-modules
\begin{equation} \label{eq-D}
\cM_{\X}\simeq\invlim_r\cM_{\X_r},\qquad\mu\longmapsto\bigl[v\mapsto\mu\bigl(v+p^r\Z_p^2\bigr)\bigr]
\end{equation}
where the inverse limit is computed with respect to the norm maps $\nu_r:\cM_{\X_r}\rightarrow\cM_{\X_{r-1}}$ which, for $r\geq2$, are defined by $\mu\mapsto[x\mapsto \sum_{\pi_r(y)=x}\mu(y)]$. Here $\pi_r:\X_r\rightarrow \X_{r-1}$ stands for the canonical projection.

Note that $\Gamma_0^D(M)$ injects into $\GL_2(\Z_p)$ via $\iota_p$, and in this way it acts on $\X_r$. Since $\Gamma_0^D(M)$ is dense in $\GL_2(\Z_p)$ with respect to the $p$-adic topology, it follows that the action of $\Gamma_0^D(M)$ on $\X_r$ induced by $\iota_p$ is transitive. Hence, since $\Gamma_r=\Sigma(p^r)\cap\Gamma_0^D(M)$, for all $r\geq1$ there exists, as above, a bijection between $\Gamma_0^D(M)/\Gamma_r$ and $\X_r$.

The set of functions $\cM_{\X_r}$ is then identified with the set of functions from the cosets $\Gamma_0^D(M)/\Gamma_r$ to $H\otimes \Z_p$, which is in bijection with the set of functions $\phi:\Gamma_0^D(M)\rightarrow H\otimes \Z_p$ such that $\phi(\gamma\cdot\delta)=\phi(\gamma)$ for all $\gamma\in\Gamma_0^D(M)$ and all $\delta\in\Gamma_r$, namely, the coinduced $\Gamma_0^D(M)$-module ${\rm Coind}_{\Gamma_r}^{\Gamma_0^D(M)}(H\otimes \Z_p)$. Thus Shapiro's lemma shows that
\begin{equation} \label{equation-shapiro}
H^1\bigl(\Gamma_0^D(M),\cM_{\X_r}\bigr)\simeq H^1\Big(\Gamma_0^D(M),{\rm Coind}_{\Gamma_r}^{\Gamma_0^D(M)}(H\otimes \Z_p)\Big)\simeq H^1(\Gamma_r,H\otimes \Z_p).
\end{equation}
Finally, we obtain
\[ H^1\bigl(\Gamma_0^D(M),\cM_\X\bigr)\simeq H^1\bigl(\Gamma_0^D(M),\invlim_r\cM_{\X_r}\bigr)\simeq\invlim_r H^1\bigl(\Gamma_0^D(M),\cM_{\X_r}\bigr)\simeq\invlim_r H^1(\Gamma_r,H\otimes \Z_p) \]
where the first isomorphism follows from \eqref{eq-D}, the second from \cite[Corollary 2.3.5]{NSW} and the fact that $\Gamma_0^D(M)$ is finitely generated, and the third from \eqref{equation-shapiro}. \end{proof}

Now we prove a result which obviously implies Theorem \ref{prop-lifting}.

\begin{proposition} \label{com}
The map
\[ \pi_*:H^1\bigl(\Gamma_0^D(M),\cM_\X\bigr)\longrightarrow H^1\bigl(\Gamma_0^D(M),\cM\bigr) \]
is surjective.
\end{proposition}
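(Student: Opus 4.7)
My plan is to use the identifications provided by Proposition~\ref{isos} to convert the statement into a question about corestriction maps between the cohomologies of a tower of congruence subgroups, and then to analyse this corestriction in the inverse limit.

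First, I would verify that under the Shapiro evaluations in Proposition~\ref{isos} the map $\pi_\ast$ is identified at each finite level $r$ with the corestriction
\[ \operatorname{cor}^{(r)}\colon H^1\bigl(\Gamma_r, H\otimes\Z_p\bigr)\longrightarrow H^1\bigl(\Gamma_0^D(p^rM), H\otimes\Z_p\bigr) \]
attached to the inclusion $\Gamma_r\subset\Gamma_0^D(p^rM)$. This amounts to a direct cocycle computation: for $\tilde\mu\in Z^1(\Gamma_0^D(M),\cM_\X)$ with Shapiro image $h\mapsto\tilde\mu_h(\tilde U_r)$, the pushforward $\pi_\ast\tilde\mu$ evaluated at $U_r$ is $\gamma\mapsto\sum_u\tilde\mu_\gamma(u\tilde U_r)$, the sum being over $u\in(\Z/p^r\Z)^\times$. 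Choosing coset representatives $g_u\in\Gamma_0^D(p^rM)$ with $g_u\tilde U_r=u\tilde U_r$ in $\X_r$ so that $\Gamma_0^D(p^rM)=\bigsqcup_u g_u\Gamma_r$, and applying the cocycle identity to $\tilde\mu_\gamma(g_u\tilde U_r)$, the various boundary terms telescope and recover the standard formula for $\operatorname{cor}^{(r)}$.

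Next, I would exploit the specific structure of the inclusion $\Gamma_r\subset\Gamma_0^D(p^rM)$ to bound the failure of $\operatorname{cor}^{(r)}$ to be surjective. As $\Gamma_r$ is the kernel of the reduction $\Gamma_0^D(p^rM)\twoheadrightarrow(\Z/p^r\Z)^\times$, $\smallmat abcd\mapsto a\bmod p^r$, it is normal in $\Gamma_0^D(p^rM)$ with abelian quotient $(\Z/p^r\Z)^\times$ acting trivially on the torsion-free $\Z_p$-module $H\otimes\Z_p$. Since $\Hom((\Z/p^r\Z)^\times,H\otimes\Z_p)=0$, the Hochschild--Serre inflation-restriction sequence gives that $\operatorname{res}$ is injective in degree one; combined with the identity $\operatorname{cor}\circ\operatorname{res}=[\Gamma_0^D(p^rM):\Gamma_r]=p^{r-1}(p-1)$, this shows that the image of $\operatorname{cor}^{(r)}$ contains $p^{r-1}(p-1)\cdot H^1(\Gamma_0^D(p^rM), H\otimes\Z_p)$, so that the cokernel $C_r$ of $\operatorname{cor}^{(r)}$ is annihilated by $p^{r-1}(p-1)$.

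The final step, which I expect to be the main obstacle, is to deduce surjectivity of $\invlim_r\operatorname{cor}^{(r)}$ from this exponent bound. The six-term exact sequence attached to the inverse system of short exact sequences defining $C_r$ reduces the claim to showing that both $\invlim_r C_r=0$ and its first derived functor $R^1\invlim_r C_r=0$. The exponent bound $p^{r-1}(p-1)\cdot C_r=0$ alone is not sufficient, as the example $\invlim_r\Z/p^r=\Z_p$ demonstrates; one must additionally exploit the specific compatibility of the transition maps $C_{r+1}\to C_r$, which are induced by corestrictions between consecutive levels of the tower, to force any compatible sequence of representatives to lie in arbitrarily deep layers of a $p$-adic filtration and thus to vanish. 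This last analysis is where I expect the main technical work to lie.
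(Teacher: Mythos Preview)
Your first step---identifying $\pi_\ast$ at each finite level $r$ with the corestriction $\mathrm{cor}^{(r)}\colon H^1(\Gamma_r,H\otimes\Z_p)\to H^1(\Gamma_0^D(p^rM),H\otimes\Z_p)$---is correct and is exactly how the paper begins. The divergence comes afterward, and the gap you yourself flag is genuine: the exponent bound $p^{r-1}(p-1)\cdot C_r=0$ on the cokernels does not by itself force $\invlim_r C_r=0$ or $R^1\invlim_r C_r=0$, and the further analysis of the transition maps that you say is needed is not actually carried out. As it stands, the proposal is a correct reduction followed by an unfinished attack on the reduced problem.

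The paper bypasses this difficulty with a single structural observation rather than a cokernel chase. By Poincar\'e duality, $H^1(\Gamma_r,H\otimes\Z_p)\simeq{\rm Ta}_p(J_r)\otimes H$ and $H^1(\Gamma_0^D(p^rM),H\otimes\Z_p)\simeq{\rm Ta}_p(J_0^D(p^rM))\otimes H$, where $J_r$ is the Jacobian of $X_r$. Passing to inverse limits, the source becomes ${\rm Ta}_p(J_\infty)\otimes H$ with its natural $\Lambda=\Z_p[\![1+p\Z_p]\!]$-action via diamond operators. Since $J_0^D(p^rM)$ is precisely the quotient of $J_r$ by the diamond action, the map $\invlim_r\mathrm{cor}^{(r)}$ is identified with the canonical surjection
\[ {\rm Ta}_p(J_\infty)\otimes H\;\longepi\;\bigl({\rm Ta}_p(J_\infty)/I_\Lambda\cdot{\rm Ta}_p(J_\infty)\bigr)\otimes H\simeq{\rm Ta}_p\bigl(J_0^D(p^\infty M)\bigr)\otimes H, \]
which is visibly onto. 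This Iwasawa-theoretic reinterpretation is the missing idea: it replaces your proposed limit analysis of the $C_r$ entirely, and without it (or some equivalent control on the tower) your argument does not close.
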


\begin{proof} A simple computation shows that there is a commutative square
\[ \xymatrix@C=40pt@R=35pt{H^1\bigl(\Gamma_0^D(M),\cM_\X\bigr)\ar[r]^-{\pi_\ast}\ar[d]^-\simeq & H^1\bigl(\Gamma_0^D(M),\cM\bigr)\ar[d]^-\simeq\\
                      \invlim_rH^1(\Gamma_r,H\otimes \Z_p)\ar[r]^-{\invlim\mathrm{cor}_r} & \invlim_rH^1\bigl(\Gamma^D_0(p^rM),H\otimes \Z_p\bigr)} \]
in which
\[ \mathrm{cor}_r:=\mathrm{cor}_{\Gamma_r}^{\Gamma_0^D(p^rM)}:H^1(\Gamma_r,H\otimes \Z_p)\longrightarrow H^1\bigl(\Gamma_0^D(p^rM),H\otimes \Z_p\bigr) \]
is the corestriction (or transfer) map as defined, e.g., in \cite[Ch. III, \S 9]{Br} and the vertical isomorphisms are those in Proposition \ref{isos}. By Poincar\'e duality, and because $H$ is a free abelian group endowed with the trivial action of $\Gamma$, one has
\[ H^1(\Gamma_r,H\otimes \Z_p)\simeq H^1(\Gamma_r,\Z_p)\otimes H\simeq{\rm Ta}_p(J_r)\otimes H. \]
Here $J_r$ stands for the Jacobian variety of $X_r=\Gamma_r\backslash\cl H$ and ${\rm Ta}_p(J_r)$ is the $p$-adic Tate module of $J_r$. The restriction maps
\[ H^1(\Gamma_r,H\otimes \Z_p)\longrightarrow H^1(\Gamma_{r-1},H\otimes \Z_p) \]
turn out to be induced by the canonical maps between Tate modules
\[ {\rm Ta}_p(J_r)\longrightarrow{\rm Ta}_p(J_{r-1}) \]
arising from the universal property of Albanese varieties. Let us introduce the projective limit
\[ {\rm Ta}_p(J_\infty):=\invlim_r{\rm Ta}_p(J_r). \]
The diamond operators act on ${\rm Ta}_p(J_r)$ and induce an action of $1+p\Z_p$ on ${\rm Ta}_p(J_\infty)$. In this way the limit ${\rm Ta}_p(J_\infty)$ becomes a module over the Iwasawa algebra $\Lambda:=\Z_p[\![1+p\Z_p]\!]$.

Similarly, for all $r\geq1$ there is an isomorphism
\[ H^1\bigl(\Gamma_0^D(p^rM),H\otimes \Z_p\bigr)\simeq{\rm Ta}_p\bigl(J_0^D(p^rM)\bigr)\otimes H, \]
and we can again form the projective limit ${\rm Ta}_p\bigl(J_0^D(p^\infty M)\bigr):=\invlim{\rm Ta}_p\bigl(J_0^D(p^rM)\bigr)$.

If $I_\Lambda$ is the augmentation ideal of $\Lambda$ then the map
\[ \invlim_r H^1(\Gamma_r,H\otimes \Z_p)\xrightarrow{\invlim\mathrm{cor}_r}\invlim_r H^1\bigl(\Gamma^D_0(p^rM),H\otimes \Z_p\bigr) \]
corresponds, via the above isomorphisms, to the map
\begin{equation} \label{surj-lim-eq}
{\rm Ta}_p(J_\infty)\otimes H\longrightarrow\bigl({\rm Ta}_p(J_\infty)/I_\Lambda\cdot{\rm Ta}_p(J_\infty)\bigr)\otimes H\simeq{\rm Ta}_p\bigl(J_0^D(p^\infty M)\bigr)\otimes H
\end{equation}
because $J^D_0(p^rM)$ is precisely the quotient of $J_r$ by the action of the diamond operators. The map \eqref{surj-lim-eq} is visibly surjective, and the proposition is proved. \end{proof}

\subsection{Splitting cocycles}

Recall the homomorphism $\Phi:H_2(\Gamma,\Z)\rightarrow T(\C_p)$ from Section \ref{periods}, whose image is contained in $T(\Q_p)$ and is, by definition, the lattice $L$. Since $T(\C_p)$ is divisible, by the universal coefficient theorem there is a natural isomorphism
\[ H^2\bigl(\Gamma,T(\C_p)\bigr)\simeq\Hom\bigl(H_2(\Gamma,\Z),T(\C_p)\bigr), \]
and $\Phi$ defines in this way an element $\boldsymbol{d}\in H^2\bigl(\Gamma,T(\C_p)\bigr)$. By construction, the image in
\[ H^2\bigl(\Gamma,T(\C_p)/L\bigr)\simeq\Hom\bigl(H_2(\Gamma,\Z),T(\C_p)/L\bigr) \]
of the class $\boldsymbol{d}$ is trivial, and $L$ is the smallest subgroup of $T(\Q_p)$ with this property. Fix a point $\tau\in K_p-\Q_p$, i.e., a $K_p$-rational point on $\mathcal H_p$. Independently of this choice, the class $\boldsymbol{d}$ can be represented by the $2$-cocycle $d\in Z^2\bigl(\Gamma,T(K_p)\bigr)$ given by
\begin{equation} \label{desc-d}
d_{\gamma_1,\gamma_2}:=\mint_{\PP^1(\Q_p)}\frac{t-\gamma^{-1}_{1}(\tau)}{t-\tau}d\mu_{\gamma_2}(t),
\end{equation}
where $\mu$ is a cocycle in $Z^1\bigl(\Gamma,\cM_0(H)\bigr)$ representing $\boldsymbol\mu$, which we fix for the rest of this section.

Set $H_p:=H\otimes K_p$ for the rest of the article and consider the map
\[ \beta_{\mathcal L}:T(K_p)\longrightarrow H_p,\qquad h\otimes k\longmapsto h\otimes\log_p(k)-\mathcal L_p^D\cdot h\otimes\ord_p(k) \]
and the $2$-cocycle $\beta_{\mathcal L}\circ d\in Z^2(\Gamma,H_p)$, whose image in $H^2(\Gamma,H_p)$ we denote by $\boldsymbol{d}_{\mathcal L}$. Then $\beta_{\mathcal L}(L)$ is the smallest subgroup of $H_p$ such that the image of $\boldsymbol{d}_{\mathcal L}$ in $H^2\bigl(\Gamma,H_p/\beta_{\mathcal L}(L)\bigr)$ is trivial.

Theorem \ref{prop-I} is a direct consequence of the following result.

\begin{theorem} \label{vanish}
The cohomology class $\boldsymbol{d}_{\mathcal L}\in H^2(\Gamma,H_p)$ is trivial.
\end{theorem}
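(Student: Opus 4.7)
The plan is to adapt Dasgupta's strategy in \cite[\S 5]{Das} to the quaternionic setting by combining the amalgamated-product decomposition $\Gamma = \Ga \ast_{\Gap} \hGa$ from \eqref{amalgam} with the measure-lifting Theorem \ref{prop-lifting}. The Mayer--Vietoris exact sequence attached to this amalgam yields
\begin{equation*}
H^1(\Ga, H_p) \oplus H^1(\hGa, H_p) \longrightarrow H^1(\Gap, H_p) \xrightarrow{\,\partial\,} H^2(\Gamma, H_p) \xrightarrow{\mathrm{res}} H^2(\Ga, H_p) \oplus H^2(\hGa, H_p),
\end{equation*}
so the proof will split into two steps: (a) showing that the restrictions of $\boldsymbol{d}_{\mathcal L}$ to $\Ga$ and $\hGa$ are trivial, and (b) showing that the resulting class $\boldsymbol\alpha \in H^1(\Gap, H_p)$ with $\partial(\boldsymbol\alpha) = \boldsymbol{d}_{\mathcal L}$ is itself zero.

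For (a), the key identity, valid for $(x,y) \in \X$ and $t = x/y \in \PP^1(\Q_p)$,
\[
\frac{t - \gamma_1^{-1}(\tau)}{t - \tau} = \frac{\Omega_{\gamma_1^{-1}(\tau)}(x,y)}{\Omega_\tau(x,y)}, \qquad \Omega_\sigma(x,y) := x - \sigma y,
\]
lifts the multiplicative integrand in \eqref{desc-d} to a $\Z_p^\times$-invariant function on $\X$, which can then be integrated against the lifted cocycle $\tilde\mu$ representing the class $\boldsymbol{\tilde\mu} \in H^1(\Ga, \cM_\X)$ provided by Theorem \ref{prop-lifting}. For $\gamma \in \Ga$ I would set
\[
\Psi(\gamma) := \int_{\X} \bigl[ \log_p \Omega_\tau(x,y) - \mathcal L_p^D \cdot \ord_p \Omega_\tau(x,y) \bigr] \, d\tilde\mu_\gamma(x,y) \in H_p,
\]
with $\tau$ the base point fixed after \eqref{desc-d}, and define an analogous primitive $\hat\Psi$ on $\hGa$ via the $\omega_p$-conjugate lift. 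Using the cocycle relation for $\tilde\mu$ together with the transformation law $\Omega_\tau(\gamma(x,y)) = (\gamma_{11} - \tau\gamma_{21})\,\Omega_{\gamma^{-1}(\tau)}(x,y)$ for $\gamma \in \GL_2(\Z_p)$, a direct computation should show that $\delta \Psi = (\beta_{\mathcal L} \circ d)|_{\Ga^2}$, with the explicit $\tau$-dependence cancelling in the coboundary, and analogously for $\hat\Psi$ on $\hGa^2$.

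The main obstacle is (b): showing that the difference cocycle $\alpha := (\Psi - \hat\Psi)|_{\Gap}$ represents a trivial class $\boldsymbol\alpha$. By construction $\boldsymbol\alpha$ lies in the $p$-new part of $H^1(\Gap, H_p)$, and by the Hecke equivariance of the integration map (Proposition \ref{HeckeEquiv}) it is equivariant for the Hecke action. The identity $\mathcal L_p^D \cdot \ord_X = \log_X$ on the monodromy module furnished by Proposition \ref{prop-II} encodes precisely the statement that the map $\beta_{\mathcal L}$ factors through the kernel of the \v{C}erednik--Drinfeld uniformization \eqref{unif-I} of $J_0^D(pM)^{p\text{-new}}$. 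Combining this with the Hecke-equivariant identifications coming from the Jacquet--Langlands correspondence of \S\ref{section-JL} should force $\boldsymbol\alpha = 0$. The technical heart of this step is matching the explicit measure-theoretic representative $\alpha$ with a concrete element of the monodromy pairing on $X$ via the compatibility diagram of Proposition \ref{prop-int-diag}; this is where most of the bookkeeping and the real combinatorial content of the argument will reside.
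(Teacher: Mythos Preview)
Your step (a) is essentially the paper's approach: with $\tau$ chosen so that $\mathrm{red}(\tau)=v_\ast$, one has $\ord_p\Omega_\tau(x,y)=0$ identically on $\X$, so your $\Psi$ coincides with the paper's cochain $\rho$ of Definition~\ref{rho}, and Proposition~\ref{prop-splitting-I} shows it splits $\log_p(d)_{|\Ga}=(\beta_{\mathcal L}\circ d)_{|\Ga}$. The analogous splitting on $\hGa$ also works, but the correct lift is not simply the ``$\omega_p$-conjugate'': the paper takes $\hat{\tilde\mu}:=W_p{\boldsymbol U}_p\tilde\mu$, and the presence of $U_p$ is essential for what follows.

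The genuine gap is in step (b). You aim to prove $\boldsymbol\alpha=0$ in $H^1(\Gap,H_p)$, but in fact this class is \emph{not} zero: the paper computes in Lemma~\ref{lemma-splitting-III} that $(\rho-\hat\rho)_{|\Gap}=\mathcal L_p^D\cdot\mu_U$, and $\mu_U$ is a nontrivial cocycle. What one needs is not $\boldsymbol\alpha=0$ but rather that $\Delta(\boldsymbol\alpha)$ equals $\mathcal L_p^D\cdot\ord_p(\boldsymbol d)$, so that combined with $\Delta(\boldsymbol\alpha)=\log_p(\boldsymbol d)$ one obtains $\boldsymbol d_{\mathcal L}=0$. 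Your proposed mechanism for (b), invoking Proposition~\ref{prop-II} and the uniformization \eqref{unif-I}, is misplaced: those results concern the monodromy module $X$ of supersingular points and are used \emph{afterwards} to pass from Theorem~\ref{prop-I} to Theorem~\ref{main-thm-epsilon}, not to prove Theorem~\ref{vanish} itself. The $\mathcal L$-invariant enters the argument here through its Hida-theoretic definition as $(1-U_p^2)'$: the choice $\hat{\tilde\mu}=W_p U_p\tilde\mu$ forces the difference $\rho-\hat\rho$ on $\Gap$ to be governed by $(1-U_p^2)\tilde\mu$ restricted to $\X_\infty$, and \cite[Lemma~5.16]{Das} converts that integral into $\mathcal L_p^D\cdot\mu_\gamma(U)$. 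This explicit computation, together with the identification $\Delta(\boldsymbol\mu_U)=\ord_p(\boldsymbol d)$ of Proposition~\ref{prop-splitting-III}, is the technical heart of the proof and is missing from your outline.
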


This is the statement that we will prove in various steps in the remaining subsections. To begin with, as in \S \ref{lattice}, we pick the base point $\tau\in K_p-\Q_p$ appearing in \eqref{desc-d} in such a way that $\mathrm{red}(\tau)=v_\ast$. In order to show that $\boldsymbol{d}_{\mathcal L}$ is trivial in $H^2(\G,H_p)$, we shall first prove that it splits when restricted to the subgroup $\Ga$.

With obvious notation, the first observation is that
\begin{equation} \label{d-log-eq}
(\boldsymbol{d}_{\mathcal L})_{|\Ga}=\log_p(\boldsymbol{d})_{|\Ga}.
\end{equation}
In fact, since $\mathrm{red}(\tau)=v_\ast$ and $\g_1$ lies in the stabilizer of this vertex, we have
\[ \mathrm{red}\bigl(\g_1^{-1}(\tau)\bigr)=\g^{-1}_1\bigl(\mathrm{red}(\tau)\bigr)=v_\ast, \]
thanks to the $\GL_2(\Q_p)$-invariance of the reduction map. Thus the geodesic joining $\mathrm{red}(\tau)$ with $\mathrm{red}\bigl(\g^{-1}_1(\tau)\bigr)$ is trivial and Proposition \ref{eq-II} asserts that
\[ \ord_p\bigg(\mint\frac{t-\gamma^{-1}_1(\tau)}{t-\tau}d\mu_{H,\gamma_2}^{\mathcal Y}(t)\bigg)=0. \]
Since $\ord_p\circ t_r=t_r\circ\ord_p$, it follows from \eqref{Heckepairing} and \eqref{desc-d} that $\ord_p(\boldsymbol{d})$ vanishes on $\Ga$, whence equality \eqref{d-log-eq}.

Now let $\boldsymbol{\tilde\mu}\in H^1\bigl(\Ga,\cM_\X\bigr)$ be as in Theorem \ref{prop-lifting} and choose $\tilde\mu\in Z^1\bigl(\Gamma_0^D(M),\cM_\X\bigr)$ representing $\boldsymbol{\tilde\mu}$. We claim that, at the cost of replacing it by a cohomologous cocycle, $\tilde\mu$ can be chosen such that $\pi_*(\tilde\mu)_\g=\mu_\g$ for all $\g\in\Gamma_0^D(M)$. For this, notice that it suffices to prove that the push-forward map $\pi_*:\cM_\X\rightarrow\cM$ is surjective. This can be shown, for example, using arguments borrowed from the proof of Proposition \ref{isos}, the crucial facts being that a measure in $\cM$ can be identified with a compatible sequence of maps $\Gamma_0^D(M)/\Gamma_0^D(Mp^r)\rightarrow H\otimes\Z_p$ for integers $r\geq1$ and that we have a canonical projection $\Gamma_0^D(M)/\Gamma_r\twoheadrightarrow\Gamma_0^D(M)/\Gamma_0^D(Mp^r)$ at our disposal, so that, after fixing compatible sets of representatives for $\Gamma_0^D(Mp^r)/\Gamma_r$, we can easily define a lifting to $\cM_\X$ of an element in $\cM$. Observe that these are cocycles with values in measures taking values in $H\otimes\Z_p$, which naturally embeds into $H_p$.

\begin{definition} \label{rho}
The $1$-cochain $\rho=\rho_\tau\in C^1\bigl(\Gamma_0^D(M),H_p\bigr)$ is defined as
\[ \rho_\g:=-\int_\X\log_p(x-\tau y)d\tilde\mu_\gamma(x,y). \]
\end{definition}

Note that $\rho$ depends both on the choice of $\tilde\mu$ and on the choice of $\tau$, but we shall drop any reference to either in order to lighten the notation.

\begin{proposition} \label{prop-splitting-I}
The $1$-cochain $\rho$ splits the $2$-cocycle $(d_{\mathcal L})_{|\Ga}=\log_p(d)_{|\Ga}$.
\end{proposition}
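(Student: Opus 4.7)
The plan is to verify directly that the coboundary $\delta\rho$ equals $(d_\cL)|_{\Ga}$ in $Z^2(\Ga, H_p)$. By equation \eqref{d-log-eq} it is enough to show $\delta\rho = \log_p(d)|_{\Ga}$, and since $H_p$ carries the trivial $\Ga$-action this expands to
\[ \rho_{\gamma_1} + \rho_{\gamma_2} - \rho_{\gamma_1\gamma_2} = \log_p(d_{\gamma_1,\gamma_2}) \]
for all $\gamma_1, \gamma_2 \in \Ga$. The key input is the cocycle relation $\tilde{\mu}_{\gamma_1\gamma_2} = \tilde{\mu}_{\gamma_1} + \gamma_1\cdot\tilde{\mu}_{\gamma_2}$ in $Z^1(\Ga,\cM_\X)$, which reduces the verification to evaluating a single integral over $\X$ against $\tilde{\mu}_{\gamma_2}$.

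Substituting the cocycle relation into the definition of $\rho_{\gamma_1\gamma_2}$ and isolating the two pieces yields
\[ \rho_{\gamma_1\gamma_2} = \rho_{\gamma_1} - \int_\X \log_p(x - \tau y)\, d(\gamma_1\cdot\tilde{\mu}_{\gamma_2})(x, y). \]
The standard change-of-variables formula $\int f\, d(\gamma_1\cdot\nu) = \int f\circ\gamma_1\, d\nu$ rewrites the new integrand as $\log_p\bigl((a-\tau c)x + (b-\tau d)y\bigr)$, where $\gamma_1 = \smallmat abcd$ under $\iota_p$. The algebraic identity $(a-\tau c)x + (b-\tau d)y = (a-\tau c)\bigl(x - \gamma_1^{-1}(\tau) y\bigr)$ then splits this as $\log_p(a-\tau c) + \log_p\bigl(x - \gamma_1^{-1}(\tau) y\bigr)$.

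The crucial observation is that the constant contribution $\log_p(a-\tau c)\cdot\tilde{\mu}_{\gamma_2}(\X)$ vanishes identically: the choice of $\tilde\mu$ lifting $\mu$ at the cocycle level ensures $\tilde{\mu}_{\gamma_2}(\X) = \pi_*(\tilde{\mu}_{\gamma_2})(\PP^1(\Q_p)) = \mu_{\gamma_2}(\PP^1(\Q_p)) = 0$, since $\mu$ takes values in $\cM_0(H)$. Collecting the surviving terms gives
\[ \rho_{\gamma_1} + \rho_{\gamma_2} - \rho_{\gamma_1\gamma_2} = \int_\X \log_p\!\left(\frac{x - \gamma_1^{-1}(\tau) y}{x - \tau y}\right) d\tilde{\mu}_{\gamma_2}(x,y). \]
The integrand is $\Z_p^\times$-invariant along the fibers of $\pi\colon \X \to \PP^1(\Q_p)$, so it descends to $\log_p\bigl((t - \gamma_1^{-1}(\tau))/(t - \tau)\bigr)$, and the integral transforms via $\pi_*$ into $\log_p(d_{\gamma_1,\gamma_2})$, as required.

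The main delicacy lies not in the computation itself but in arranging that the representative $\tilde\mu$ lifts $\mu|_{\Ga}$ on the nose at the cocycle level and not merely up to coboundary; the remark recorded just before Definition \ref{rho} guarantees this, and it is precisely this property that forces the total-mass-zero cancellation on which the whole argument hinges.
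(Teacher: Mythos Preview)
Your proof is correct and follows essentially the same route as the paper's: both unwind the coboundary of $\rho$ via the cocycle relation for $\tilde\mu$, perform the change of variables $(x,y)\mapsto\gamma_1(x,y)$, and use the total-mass-zero property of $\mu_{\gamma_2}$ to kill the constant $\log_p(a-\tau c)$ term. The only cosmetic difference is the order of operations---the paper first descends the quotient $\log_p\bigl((x-\tau y)/(ax+by-\tau(cx+dy))\bigr)$ to $\PP^1(\Q_p)$ and then invokes mass zero there, whereas you invoke $\tilde\mu_{\gamma_2}(\X)=0$ upstairs before descending; since $\tilde\mu_{\gamma_2}(\X)=\mu_{\gamma_2}\bigl(\PP^1(\Q_p)\bigr)$ this is the same fact.
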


\begin{proof} We must show that
\[ \gamma_1\rho_{\gamma_2}+\rho_{\gamma_1}-\rho_{\gamma_1\gamma_2}=\log_p(d_{\gamma_1,\gamma_2}) \]
for all $\gamma_1,\gamma_2\in\Gamma_0^D(M)$. Since the action of $\Gamma_0^D(M)$ on $H\otimes K_p$ is trivial, one has
\[ \begin{split}
   \gamma_1\rho_{\gamma_2}+\rho_{\gamma_1}-\rho_{\gamma_1\gamma_2}&=-\int_\X\log_p(x-\tau y)d\bigl(\tilde\mu_{\gamma_1}+\tilde\mu_{\gamma_2}-\tilde\mu_{\gamma_1\gamma_2}\bigr)(x,y)\\[2mm]
   &=-\int_\X\log_p(x-\tau y)d\bigl(\tilde\mu_{\gamma_2}-{}^{\gamma_1}\tilde\mu_{\gamma_2}\bigr)(x,y).
   \end{split} \]
Thus if $\gamma_1=\smallmat abcd$ then
\[ \begin{split}
   \gamma_1\rho_{\gamma_2}+\rho_{\gamma_1}-\rho_{\gamma_1\gamma_2}&=-\int_\X\log_p(x-y\tau)d\bigl(\tilde\mu_{\gamma_2}-\gamma_1d\tilde\mu_{\gamma_2}\bigr)(x,y)\\[2mm]
   &=-\int_\X\log_p\left(\frac{x-\tau y}{ax+by-\tau(cx+dy)}\right)d\tilde\mu_{\gamma_2}(x,y).
   \end{split} \]
Now we argue as in \cite[Proposition 5.14]{Das}. Since the integrand above depends only on $x/y$, we deduce that
\[ \begin{split}
   \gamma_1\rho_{\gamma_2}+\rho_{\gamma_1}-\rho_{\gamma_1\gamma_2}&=\int_{\PP^1(\Q_p)}\log_p\left(\frac{at+b-(ct+d)\tau}{t-\tau}\right)d\mu_{\gamma_2}(t)\\[2mm]
   &=\int_{\PP^1(\Q_p)}\log_p\left(\frac{t-\gamma_1^{-1}(\tau)}{t-\tau}\right)d\mu_{\gamma_2}(t)-\int_{\PP^1(\Q_p)}\log_p\left(a-c\tau\right)d\mu_{\gamma_2}(t).
   \end{split} \]
Since $\mu_{\gamma_2}$ has total mass $0$, the last integral in the above expression vanishes, and the result follows from \eqref{desc-d}. \end{proof}

\subsection{Passing from $\Ga$ to $\hat\Gamma_0^D(M)$} \label{sec-shapiro}

Notations and some of the ideas in this subsection are borrowed from \cite[\S 1]{AS}. Let $Y$ be a locally compact and totally disconnected ($p$-adic) topological space endowed with a left action of $\M_2(\Z_p)$.

Recall from \S \ref{HZ} the elements $g_i=g_i(p)$ for $i=0,\dots,p-1$, which give rise to the decomposition of the double cosets associated with the Hecke operator $U_p$, and recall also the set of representatives
\[ \bigl\{\alpha_\infty:=1,\alpha_0:=\om_p^{-1}g_0,\dots,\alpha_{p-1}:=\om_p^{-1}g_{p-1}\bigr\} \]
for the cosets $\G_0^D(M)/\Gap$. Assume there exists a compact open subset $Y_\infty$ of $Y$ satisfying the following conditions:

\begin{itemize}
\item[(I)] $\g Y_\infty=Y_\infty$ for all $\g\in\Gamma_0^D(pM)$;
\item[(II)] if $Y_i:=\alpha_i\cdot Y_\infty$ for $i=0,\dots,p-1$ and $Y_{\mathrm{aff}}:=\coprod_{i=0}^{p-1}Y_i$ then $Y=Y_\infty\coprod Y_{\mathrm{aff}}$;
\item[(III)] $g_i\cdot Y_\infty\subset Y_\infty$ and $\coprod_{i=0}^{p-1}g_i\cdot Y_\infty=Y_\infty$;
\item[(IV)] $\om_p\cdot Y_{\mathrm{aff}}=Y_\infty$ and $\om_p\cdot Y_\infty=pY_{\mathrm{aff}}$, so that $\om_p\cdot Y=Y_\infty\coprod pY_{\mathrm{aff}}$.
\end{itemize}
Then it follows from (I) and (II) that
\[ \cM_Y\simeq{\rm Coind}^{\Ga}_{\Gap}(\cM_{Y_\infty}), \]
and Shapiro's lemma produces an isomorphism
\[ \mathscr{S}:H^1\bigl(\G_0^D(M),\cM_Y\bigr)\overset\simeq\longrightarrow H^1\bigl(\Gap,\cM_{Y_\infty}\bigr). \]
Conditions (III) and (IV) on $Y_\infty$ ensure that the Hecke operator $U_p$ is well defined and well behaved on $H^1\bigl(\Gamma_0^D(pM),\cM_{Y_\infty}\bigr)$. In the spirit of \cite[Lemma 1.1.4]{AS}, we transport the operator $U_p$ to an operator on $H^1\bigl(\G_0^D(M),\cM_Y\bigr)$ by means of the isomorphism $\mathscr{S}$. Namely, define
\begin{equation} \label{Up}
{\boldsymbol U}_p:=\mathscr{S}^{-1}U_p\mathscr{S}.
\end{equation}
The same argument applied to $\hat\G_0^D(M)$ in place of $\G_0^D(M)$ shows the existence of an isomorphism
\[ \hat{\mathscr{S}}:H^1\bigl(\hat\G_0^D(M),\cM_{\om_p\cdot Y}\bigr)\overset\simeq\longrightarrow H^1\bigl(\G_0^D(pM),\cM_{\om_p\cdot Y_\infty}\bigr). \]

\begin{lemma} \label{lemma-wp-up}
For every $\boldsymbol{\nu}\in H^1\bigl(\Gamma_0^D(M),\cM_Y\bigr)$ one has
\begin{itemize}
\item[(i)] $W_p^{-1}U_p\bigl(\res_{\Gamma_0^D(pM)}(\boldsymbol{\nu}_{|Y_{\infty}})\bigr)=\res_{\Gamma_0^D(pM)}(\boldsymbol{\nu}_{|Y_{\mathrm{aff}}})$;
\item[(ii)] $U_p^2\bigl(\res_{\Gamma_0^D(pM)}\boldsymbol{\nu}_{|Y_\infty}\bigr)=\bigl(W_pU_p\res_{\Gamma_0^D(pM)}\boldsymbol{\nu}\bigr)_{|Y_\infty}$;
\item[(iii)] $\bigl(W_pU_p\res_{\Gamma_0^D(pM)}\boldsymbol{\nu}\bigr)_{|pY_{\mathrm{aff}}}=W_pU_p\bigl(\res_{\Gamma_0^D(pM)}\boldsymbol{\nu}_{|Y_\infty}\bigr)=\bigl(W_p^2\res_{\G_0^D(pM)}(\boldsymbol{\nu})\bigr)_{|pY_{\mathrm{aff}}}$.
\end{itemize}
Moreover, for every $\boldsymbol{\nu}\in H^1\bigl(\hat\Gamma_0^D(M),\cM_{\om_p\cdot Y}\bigr)$ one has
\begin{itemize}
\item[(iv)] $U_pW_p^{-1}\bigl(\res_{\Gamma_0^D(pM)}(\boldsymbol{\nu}_{|pY_{\mathrm{aff}}})\bigr)=\res_{\Gamma_0^D(pM)}(\boldsymbol{\nu}_{|Y_\infty})$.
\end{itemize}
\end{lemma}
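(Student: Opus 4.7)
The plan is to fix a cocycle $\nu\in Z^1(\Ga,\cM_Y)$ representing $\boldsymbol\nu$ and to verify all four identities at the cochain level, guided by the factorisation $g_i=\om_p\alpha_i$ -- which links the $U_p$-representatives to the coset representatives $\alpha_i$ of $\Ga/\Gap$ -- together with the axioms (I)--(IV).

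For part (i), I would first unravel the definitions: for every $\gamma\in\Gap$, setting $\gamma':=\om_p\gamma\om_p^{-1}\in\Gap$, a direct computation yields
\[ \bigl(W_p^{-1}U_p\res\nu_{|Y_\infty}\bigr)_\gamma=\sum_{i=0}^{p-1}\bigl(\alpha_i\cdot\nu_{t_i(\gamma')}\bigr)\big|_{Y_i}. \]
The defining relation $g_it_i(\gamma')=\gamma'g_{j(i)}$, combined with $g_k=\om_p\alpha_k$, simplifies to the crucial identity $\alpha_it_i(\gamma')=\gamma\alpha_{j(i)}$, to which I would apply the cocycle condition for $\nu$ on both sides, obtaining
\[ \alpha_i\cdot\nu_{t_i(\gamma')}=\nu_\gamma+\gamma\cdot\nu_{\alpha_{j(i)}}-\nu_{\alpha_i}. \]
After summing over $i$ and restricting each summand to $Y_i$, the first term reconstructs $(\res\nu_{|Y_{\rm aff}})_\gamma$, and the remaining contributions should assemble into the coboundary $(\gamma-1)\phi$ with $\phi:=\sum_i\nu_{\alpha_i}|_{Y_i}\in\cM_{Y_{\rm aff}}$; the matching requires showing that $\gamma\in\Gap$ permutes the subsets $Y_k$ via the permutation inverse to $j$, which in turn follows from $\gamma\alpha_{j(i)}\in\alpha_i\Gap$ and axiom (I).

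Part (iv) is established by the mirror argument, replacing the decomposition $Y=Y_\infty\coprod Y_{\rm aff}$ by $\om_pY=Y_\infty\coprod pY_{\rm aff}$ given by axiom (IV), with $\{g_i\om_p^{-1}\}$ serving as coset representatives for $\hat\Gamma_0^D(M)/\Gap$. Parts (ii) and (iii) are then formal consequences of (i), of the commutativity of $U_p$ and $W_p$ in $\mathcal H(pM)$, and of the two elementary identities $(W_p\mu)_{|Y_\infty}=W_p(\mu_{|Y_{\rm aff}})$ and $(W_p\mu)_{|pY_{\rm aff}}=W_p(\mu_{|Y_\infty})$, both immediate from (IV). For (ii) I would rewrite (i) in cohomology as $U_p(\res\nu_{|Y_\infty})=W_p(\res\nu_{|Y_{\rm aff}})$, apply $U_p$ once more, and swap with $W_p$ to obtain $U_p^2(\res\nu_{|Y_\infty})=W_pU_p(\res\nu_{|Y_{\rm aff}})$; the first elementary identity then closes the loop. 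For (iii) the first equality follows from the second elementary identity applied to $\mu=U_p\res\nu$, and the second equality follows by applying $W_p$ once more to (i).

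The main obstacle is part (i), specifically identifying the residual sum $\sum_i\bigl(\gamma\cdot\nu_{\alpha_{j(i)}}-\nu_{\alpha_i}\bigr)|_{Y_i}$ as a genuine coboundary. This bookkeeping rests on the careful correspondence between the permutation $j$ produced by the Hecke operator $U_p$ and the natural action of $\Gap$ on the cosets $\Ga/\Gap$, a correspondence that ultimately hinges on the factorisation $g_i=\om_p\alpha_i$ and on the stabilisation property of $Y_\infty$.
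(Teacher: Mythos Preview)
Your argument for (i) is essentially identical to the paper's: the paper writes the key cocycle identity as $\alpha_i\cdot\nu_{\alpha_i^{-1}\gamma\alpha_{j(i)}}=\nu_\gamma-\nu_{\alpha_i}+\gamma\cdot\nu_{\alpha_{j(i)}}$ (your $t_i(\gamma')$ is exactly $\alpha_i^{-1}\gamma\alpha_{j(i)}$), singles out the same coboundary $m=\sum_i\nu_{\alpha_i}|_{Y_i}$ (your $\phi$), and invokes the same bookkeeping step $\gamma(\nu_{\alpha_{j(i)}}|_{Y_{j(i)}})=(\gamma\nu_{\alpha_{j(i)}})|_{Y_i}$. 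For (iv) the paper transports (i) via the isomorphism $W_p:H^1(\Ga,\cM_U)\simeq H^1(\hat\Gamma_0^D(M),\cM_{\om_p U})$; your parallel ``mirror'' argument is an equally valid alternative.

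There is, however, a gap in your route to (ii). You invoke ``commutativity of $U_p$ and $W_p$ in $\mathcal H(pM)$'', but $W_p=T(\om_p)$ is \emph{not} an element of the commutative algebra $\mathcal H(pM)$ as defined in \S\ref{HZ}: the local condition at $p$ for $S_{R(pM)}$ requires the upper-left entry to lie in $\Z_p^\times$, which $\iota_p(\om_p)\in\smallmat{0}{-1}{p}{0}\Gamma_0^{\rm loc}(p)$ violates. So the swap $U_pW_p=W_pU_p$ on general measure-valued cohomology is not available for free. The paper sidesteps this entirely: after obtaining (iv) from (i), it derives (ii) by applying (iv) to the class $W_p\boldsymbol{U}_p\boldsymbol\nu\in H^1(\hat\Gamma_0^D(M),\cM_{\om_p Y})$, and (iii) by applying $W_p^2$ to (i). These deductions use only (i), (iv), and the support-shifting identities you label ``elementary'', so your overall plan survives once you replace the commutativity appeal by this bootstrap.
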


\begin{proof} Let us show (i) first, the remaining statements being applications of or variations on it. Let $\boldsymbol{\nu}\in H^1\bigl(\Gamma_0^D(M),\cM_Y\bigr)$ and fix a representative $\nu$ of $\boldsymbol\nu$ in $Z^1\bigl(\Gamma_0^D(M),\cM_Y\bigr)$; set $n:=\res_{\Gap}(\nu_{|Y_\infty})$. An easy formal calculation shows that for all $\g\in\Gap$ the equality
\begin{equation} \label{eq-wp-up}
\alpha_i\cdot\nu_{\alpha_i^{-1}\g\alpha_{j(i)}}=\nu_\g-\nu_{\alpha_i}+\g\cdot \nu_{\alpha_{j(i)}}
\end{equation}
holds in $Z^1\bigl(\Gamma_0^D(M),\cM_Y\bigr)$. Here $i\mapsto j(i)$ is the permutation of indices such that $\alpha_i^{-1}\g\alpha_{j(i)}\in\Gap$.

Since $\alpha_i\cdot n_{\alpha_i^{-1}\g\alpha_{j(i)}}$ is supported on $Y_i$, we deduce that
\[ \alpha_i\cdot n_{\alpha_i^{-1}\g\alpha_{j(i)}}=\nu_{\g|Y_i}-\nu_{\alpha_i|Y_i}+(\g\nu_{\alpha_{j(i)}})_{|Y_i}. \]
Note, however, that $\g(\nu_{\alpha_{j(i)}|Y_{j(i)}})=(\g\nu_{\alpha_{j(i)}})_{|Y_i}$; the reason is that, since $\g$ belongs to $\alpha_i\cdot\Gap\alpha_{j(i)}^{-1}$, both measures are supported at $Y_i$ and are in fact the restriction of $\nu _{\alpha_{j(i)}}$ to this compact open subset of $Y$.

Setting $m:=\sum_{i=0}^{p-1}\nu_{\alpha_i|Y_i}\in \cM_{Y_{\mathrm{aff}}}$, equality \eqref{eq-wp-up} shows that
\[ \sum_{i=0}^{p-1}\alpha_i\nu_{\alpha_i^{-1}\g\alpha_{j(i)}}=\sum_{i=0}^{p-1}\nu_{\g|Y_i}+\g m-m. \]
Since $\alpha_i=\om_p^{-1}g_i$, by definition the first term is $W_p^{-1}U_p(n)$. On the other hand, the second term equals $\res_{\Gap}(\nu_{|Y_{\mathrm{aff}}})$ in $H^1\bigl(\Gap,\cM_{Y_{\mathrm{aff}}}\bigr)$, and (i) is proved.

For (iii), it suffices to show that
\[ W_pU_p\bigl(\res_{\Gamma_0^D(pM)}\boldsymbol{\nu}_{|Y_\infty}\bigr)=W_p^2\res_{\Gamma_0^D(pM)}\bigl(\boldsymbol{\nu}_{|Y_{\mathrm{aff}}}\bigr), \]
and this is is deduced from (i) upon applying $W_p^2$.

Part (iv) follows from (i) by taking into account that for every compact open subset $U$ of $Y$ the map $W_p$ induces an isomorphism
\[ W_p:H^1\bigl(\Gamma_0^D(M),\cM_U\bigr)\overset\simeq\longrightarrow H^1\bigl(\hat\Gamma_0^D(M),\cM_{\om_p\cdot U}\bigr). \]
Finally, to check (ii) it is again enough to prove that
\[ U_p^2\bigl(\res_{\Gamma_0^D(pM)}\boldsymbol{\nu}_{|Y_\infty})=W_pU_p\res_{\Gamma_0^D(pM)}\bigl(\boldsymbol{\nu}_{|Y_{\mathrm{aff}}}\bigr), \]
which follows by applying (iv) to $W_p{\boldsymbol U}_p\boldsymbol{\nu}$. \end{proof}

Since $\om_p^2=p\cdot g_p$ for some $g_p\in\Gap$, the map $W_p^2$ sends an element $\nu\in Z^1\bigl(\Gamma_0^D(M),\cM_U\bigr)$ to the cocycle $\g\mapsto p\cdot g_p\nu_{g_p^{-1}\g g_p}$. A straightforward calculation then shows that
\[ g_p\nu_{g_p^{-1}\g g_p}=\nu_{\g}+\g\nu_{g_p}-\nu_{g_p}. \]
Thus, since the map $\g\mapsto\g\nu_{g_p}-\nu_{g_p}$ is a coboundary, the equality
\begin{equation} \label{integral-I}
W_p^2\boldsymbol{\nu}=p\cdot\boldsymbol{\nu}
\end{equation} holds
in $H^1\bigl(\Gamma_0^D(pM),\cM_U\bigr)$ for every compact open subset $U$ of $Y$.

\subsection{Splitting on $\hat\Gamma_0^D(M)$}

Define $\boldsymbol{\hat{\tilde\mu}}:=W_p{\boldsymbol U}_p\boldsymbol{\tilde\mu}\in H^1\bigl(\hat\Gamma_0^D(M),\cM_{\om_p\X}\bigr)$ and let $\hat{\tilde\mu}$ be a $1$-cocycle representing $\boldsymbol{\hat{\tilde\mu}}$. As above, we can define a $\Z_p^\times$-bundle
\[ \hat\pi:\om_p\X\longrightarrow\PP^1(\Q_p),\qquad(x,y)\longmapsto x/y \]
which induces a map $\hat\pi_*$ on cohomology.

\begin{lemma} \label{lemma-splitting-II}
$\hat\pi_*\bigl(\boldsymbol{\hat{\tilde\mu}}\bigr)=\res_{\hat\G_0^D(M)}\,(\boldsymbol{\mu})$.
\end{lemma}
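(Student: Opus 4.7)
The proof will proceed by combining a naturality property of the push-forward maps with Lemma \ref{lemma-wp-up}, using crucially the fact that $\boldsymbol\mu$ comes from a class on the amalgamated product $\Gamma = \Gamma_0^D(M) *_{\Gap} \hat\Gamma_0^D(M)$.

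First I would establish the naturality. The projections $\pi\colon \X \to \PP^1(\Q_p)$ and $\hat\pi\colon \om_p\X \to \PP^1(\Q_p)$ are intertwined by $\iota_p(\om_p)$ in the sense that $\hat\pi\bigl(\iota_p(\om_p)\cdot v\bigr) = \om_p\cdot\pi(v)$ for every $v\in\X$, where on the right-hand side $\om_p$ acts on $\PP^1(\Q_p)$ by M\"obius transformations. Consequently the push-forwards $\hat\pi_*$ and $\pi_*$ intertwine the two Atkin--Lehner operators $W_p$. Moreover, the compact open subsets of $\X$ and $\PP^1(\Q_p)$ used to define the Shapiro isomorphism $\mathscr S$ in \S\ref{sec-shapiro} can be chosen to correspond under $\pi$, so that $\pi_*$ also commutes with the transported Hecke operator $\boldsymbol U_p$. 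Combining these two compatibilities with Theorem \ref{prop-lifting} one obtains
\begin{equation*}
\hat\pi_*\bigl(\boldsymbol{\hat{\tilde\mu}}\bigr) \;=\; \hat\pi_*\bigl(W_p\boldsymbol U_p\boldsymbol{\tilde\mu}\bigr) \;=\; W_p\boldsymbol U_p\,\pi_*(\boldsymbol{\tilde\mu}) \;=\; W_p\boldsymbol U_p\,\res_{\Gamma_0^D(M)}(\boldsymbol\mu),
\end{equation*}
and so the lemma reduces to the identity $W_p\boldsymbol U_p\,\res_{\Gamma_0^D(M)}(\boldsymbol\mu) = \res_{\hat\Gamma_0^D(M)}(\boldsymbol\mu)$ in $H^1\bigl(\hat\Gamma_0^D(M),\cM\bigr)$.

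To prove this last identity, I would apply Lemma \ref{lemma-wp-up}(i) to $\boldsymbol\nu := \res_{\Gamma_0^D(M)}(\boldsymbol\mu)$ for a suitable choice of $Y_\infty\subset\PP^1(\Q_p)$ stable under $\Gap$. Unwinding the definition $\boldsymbol U_p = \mathscr S^{-1}U_p\mathscr S$ and using the dual Shapiro isomorphism $\hat{\mathscr S}$ on the $\hat\Gamma_0^D(M)$-side, together with the key fact that $\boldsymbol\mu$ descends from a class on the amalgamated product $\Gamma$ (so that $\res_{\Gamma_0^D(M)}(\boldsymbol\mu)$ and $\res_{\hat\Gamma_0^D(M)}(\boldsymbol\mu)$ have a common further restriction to $\Gap$), both sides of the desired identity can be expressed as cohomology classes in $H^1(\Gap,\cM_{Y_\infty})$ and shown to coincide.

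The hard part will be this final step, specifically executing the bookkeeping between the two Shapiro isomorphisms $\mathscr S$ and $\hat{\mathscr S}$ and verifying that the $\om_p$-conjugation on the Bruhat--Tits tree correctly permutes the fundamental compact opens $U_{e_*}$ and $U_{\bar e_*}$ (up to the factor in $\Gamma_0^{\mathrm{loc}}(p)$ present in the decomposition of $\iota_p(\om_p)$, which is absorbed by the $\Gap$-action). Once this combinatorial compatibility is in place, the remainder is Hecke-theoretic manipulation directly parallel to the proof of Lemma \ref{lemma-wp-up}.
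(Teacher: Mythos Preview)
Your approach is essentially the one the paper takes: reduce via the naturality $\hat\pi_*\circ W_p\boldsymbol U_p=W_p\boldsymbol U_p\circ\pi_*$ to showing $W_p\boldsymbol U_p\,\res_{\Gamma_0^D(M)}(\boldsymbol\mu)=\res_{\hat\Gamma_0^D(M)}(\boldsymbol\mu)$, then compare both sides after applying the Shapiro isomorphisms $\mathscr S$ and $\hat{\mathscr S}$. Two small points where the paper's execution differs from what you sketch. First, the paper invokes part (iii) of Lemma~\ref{lemma-wp-up} rather than part (i); this is cosmetic, since (iii) is obtained from (i) by multiplying through by $W_p^2$. Second, and more substantively, what you describe as the ``hard part'' is in fact quite clean once you isolate the right observation: the action of $\GL_2(\Q_p)$ on $\PP^1(\Q_p)$ factors through $\PGL_2(\Q_p)$, so scalar matrices act trivially and hence $W_p^2$ is the identity on $H^1\bigl(\Gap,\cM_{\Z_p}\bigr)$ (using $\om_p^2=p\cdot g_p$ with $g_p\in\Gap$, the $g_p$-conjugation contributing only a coboundary). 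Combined with $\om_p\cdot\bigl(\PP^1(\Q_p)\setminus\Z_p\bigr)=\Z_p$, this immediately matches $\hat{\mathscr S}\bigl(\res_{\hat\Gamma_0^D(M)}(\boldsymbol\mu)\bigr)$ with $W_p^2\bigl(\res_{\Gap}(\boldsymbol\mu_{|\Z_p})\bigr)$. So the tree-combinatorial bookkeeping you anticipate largely evaporates; the crux is simply the $\PGL_2$-factorisation, which is precisely what fails on $\X$ and is the reason $W_p^2$ genuinely scales by $p$ there (cf.\ \eqref{integral-I}).
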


\begin{proof} It is immediate to check that $W_p{\boldsymbol U}_p\boldsymbol{\tilde\mu}$ is a lift of $W_p{\boldsymbol U}_p\cdot\res_{\Ga}(\boldsymbol{\mu})$, that is
\[ \hat\pi_*\bigl(\boldsymbol{\hat{\tilde\mu}}\bigr)=W_p{\boldsymbol U}_p\cdot\res_{\Ga}(\boldsymbol{\mu}). \]
According to \eqref{Up}, one has ${\boldsymbol U}_p=\mathscr{S}^{-1}\cdot U_p\cdot\mathscr{S}$. Since $\Gap$ is a subgroup of finite index in $\Ga$, \cite[Lemma 1.1.4]{AS} ensures that Shapiro's isomorphism $\mathscr{S}$ commutes with the action of $W_p$. More precisely, we have $\hat{\mathscr{S}}^{-1}\cdot W_p=W_p\cdot\mathscr{S}^{-1}$, hence we must show that
\[ \hat{\mathscr{S}}^{-1}\cdot W_p\cdot U_p\cdot\mathscr{S}\bigl(\res_{\Ga}(\boldsymbol{\mu})\bigr)=\res_{\hat\G_0^D(M)}\,(\boldsymbol{\mu}). \]
Thanks to part (iii) of Lemma \ref{lemma-wp-up} applied to $\boldsymbol{\nu}:=\res_{\Ga}(\boldsymbol{\mu})$, we have
\[ W_p\cdot U_p\cdot\mathscr{S}(\boldsymbol{\nu})=W_p^2\bigl(\res_{\Gap}(\boldsymbol{\nu}_{|\Z_p})\bigr) \]
in $H^1\bigl(\Gap,p\Z_p\bigr)$. Noting that $\PP^1(\Q_p)_{\mathrm{aff}}=\Z_p$, it thus suffices to show that
\[ \hat{\mathscr{S}}\bigl(\res_{\hat{\Gamma}_0^D(M)}(\boldsymbol{\mu})\bigr)=W_p^2\bigl(\res_{\Gap}(\boldsymbol{\mu}_{|\Z_p})\bigr). \]
On the left hand side, $\hat{\mathscr{S}}\bigl(\res_{\hat{\Gamma}_0^D(M)}(\boldsymbol{\mu})\bigr)$ is equal, by definition, to $\res_{\Gap}\bigl(\boldsymbol{\mu}_{|\Z_p}\bigr)$, since $\om_p\bigl(\PP^1(\Q_p)-\Z_p\bigr)=\Z_p$. On the right hand side, since the action of $\GL_2(\Q_p)$ on $\PP^1(\Q_p)$ factors through $\PGL_2(\Q_p)$, we can argue as in \eqref{integral-I} and obtain that $W_p^2(m)=\res_{\Gap}\bigl(\boldsymbol{\mu}_{|\Z_p}\bigr)$ for $m=\res_{\Gap}(\boldsymbol{\nu}_{|\Z_p})$, as we wished to show. \end{proof}

Thanks to Lemma \ref{lemma-wp-up} and equality \eqref{integral-I}, for all $\hat\g\in\hat\G_0^D(pM)$ we can write
\begin{equation} \label{eq-new}
\hat{\tilde\mu}_{\hat\g}=U_p^2\tilde\mu_{\hat\g}+\hat\g m_1-m_1\quad\text{on $\X_\infty$},\qquad\hat{\tilde\mu}_{\hat\g}=p\tilde\mu_{\hat\g}+\hat\g m_2-m_2\quad\text{on $p\X_{\rm aff}$}
\end{equation}
with $m_1\in\cM_{\X_\infty}$ and $m_2\in\cM_{\X_{\rm aff}}$. The same argument as in the proof of Lemma \ref{new} shows that the cocycle $\mu^{(p)}_{\mathbb H}\in
Z^1\bigl(\G,\cF_{\rm har}(\mathbb H)\bigr)$ given by
\[ \mu^{(p)}_{\mathbb H,\g}(e):=\sum_i\pi_\mathbb H\bigl(\bigl[t_i(g_{\g,e})\bigr]\bigr) \]
for $\g\in\G$ and $e\in\E^+$ (where the functions $t_i$ are relative to the Hecke operator $U_p$) satisfies the equation
\begin{equation} \label{U_p-mu-eq}
U_p\bigl(\mu_{\mathbb H}^{\mathcal Y}\bigr)=\mu^{(p)}_{\mathbb H}+b
\end{equation}
for some $b\in\ker(\varrho)\subset Z^1\bigl(\G,\HC(\mathbb H)\bigr)$. Since $\ker(\varrho)$ is Eisenstein (cf. the proof of Lemma \ref{Eis}), applying $t_r$ to \eqref{U_p-mu-eq} and recalling that the action of $U_p$ on $\mathbb H$ is by $\pm1$ yields the equality $U_p(\mu_\g)=\pm\mu_\g$, from which we finally deduce that
\[ U_p^2(\res_{\G_0^D(pM)}\mu)_\g=U_p^2(\mu_\g)=\mu_\g. \]
Furthermore, it is clear that $p\mu_\g=\mu_\g$ for all $\g\in\G_0^D(pM)$, because the action of $\GL_2(\Q_p)$ on $\PP^1(\Q_p)$ factors through $\PGL_2(\Q_p)$. Thus we find from \eqref{eq-new} and Lemma \ref{lemma-splitting-II} that $\hat\pi_\ast(m_1)$ and $\hat\pi_\ast(m_2)$ are $\G_0^D(pM)$-invariant $H_p$-valued measures on $\PP^1(\Q_p)$ and $\Z_p$, respectively. One easily shows that the groups of such measures are trivial; the reason for this is that the $\G_0^D(pM)$-invariance would otherwise contradict the fact that measures have to be $p$-adically bounded. Thus $\hat\pi_\ast(m_1)=0$ on $\PP^1(\Q_p)-\Z_p$ and $\hat\pi_\ast(m_2)=0$ on $\Z_p$.

Now define the $1$-cochain $\hat\rho\in C^1\bigl(\hat\Gamma_0^D(M),H_p\bigr)$ by the rule
\[ \begin{split}
   \hat\rho_{\hat\gamma}:=&-\int_{\om_p\X}\log_p(x-\tau y)d\hat{\tilde\mu}_{\hat\g}(x,y)+\int_{\X_\infty}\log_p(x-\tau y)d(\hat\g m_1-m_1)(x,y)\\
                          &+\int_{p\X_{\rm aff}}\log_p(x-\tau y)d(\hat\g m_2-m_2)(x,y)
   \end{split} \]
for all $\hat\gamma\in\hat\Gamma_0^D(M)$. As above, this cochain depends on $\tau$ and on the choices made for the representatives of the cohomology classes. Nevertheless, we can prove

\begin{proposition} \label{prop-splitting-II}
The $1$-cochain $\hat\rho$ splits the $2$-cocycle $\log_p(d)_{|\hGa}$.
\end{proposition}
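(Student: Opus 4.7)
My plan is to mimic the proof of Proposition \ref{prop-splitting-I}, adapted to the three-piece structure of $\hat\rho$. Write $\hat\rho_{\hat\gamma}=A(\hat\gamma)+B(\hat\gamma)+C(\hat\gamma)$ for the three integrals in its definition, over $\om_p\X$, $\X_\infty$ and $p\X_{\rm aff}$ respectively. Since $\hGa$ acts trivially on $H_p$, the quantity $\hat\gamma_1\hat\rho_{\hat\gamma_2}+\hat\rho_{\hat\gamma_1}-\hat\rho_{\hat\gamma_1\hat\gamma_2}$ equals $\hat\rho_{\hat\gamma_2}+\hat\rho_{\hat\gamma_1}-\hat\rho_{\hat\gamma_1\hat\gamma_2}$ and splits as the sum of the analogous expressions for $A$, $B$ and $C$. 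Applying the cocycle relation $\hat{\tilde\mu}_{\hat\gamma_1\hat\gamma_2}=\hat{\tilde\mu}_{\hat\gamma_1}+{}^{\hat\gamma_1}\hat{\tilde\mu}_{\hat\gamma_2}$ and the analogous $1$-coboundary identity for the cochains $\hat\gamma\mapsto\hat\gamma m_i-m_i$, each piece produces an integral of $\log_p(x-\tau y)$ against a difference of the form $\nu_{\hat\gamma_2}-{}^{\hat\gamma_1}\nu_{\hat\gamma_2}$. The change-of-variables $\int f\,d({}^{\hat\gamma_1}\nu)=\int f(\hat\gamma_1(x,y))d\nu$, with $\iota_p(\hat\gamma_1)=\smallmat{a}{b}{c}{d}$, reshapes each integrand into the ratio form $\log_p\!\bigl(\frac{x-\tau y}{(ax+by)-\tau(cx+dy)}\bigr)$.

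The crucial step is push-forward via $\hat\pi:\om_p\X\to\PP^1(\Q_p)$: the ratio integrand depends only on $t=x/y$, so it descends. For the $A$-contribution, Lemma \ref{lemma-splitting-II} identifies $\hat\pi_\ast(\hat{\tilde\mu})$ with $\res_{\hGa}(\boldsymbol\mu)$ in cohomology; after modifying $\hat{\tilde\mu}$ (and correspondingly $\hat\rho$) by a coboundary that leaves the $2$-coboundary of $\hat\rho$ invariant, one may assume $\hat\pi_\ast(\hat{\tilde\mu}_{\hat\gamma})=\mu_{\hat\gamma}$ at cocycle level. For the $B$- and $C$-contributions, the paragraph immediately preceding the definition of $\hat\rho$ already establishes that $\hat\pi_\ast(m_1)=\hat\pi_\ast(m_2)=0$, since any $\Gap$-invariant bounded $H_p$-valued measure on a proper compact open of $\PP^1(\Q_p)$ must vanish. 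Hence the images of $\hat\gamma_2 m_i-m_i$ under $\hat\pi_\ast$ also vanish, and the $B$- and $C$-contributions collapse to zero after push-forward.

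What remains is $-\int_{\PP^1(\Q_p)}\log_p\!\bigl(\frac{t-\tau}{(at+b)-\tau(ct+d)}\bigr)d\mu_{\hat\gamma_2}(t)$, to which the endgame of Proposition \ref{prop-splitting-I} applies verbatim. Factoring $(at+b)-\tau(ct+d)=(a-c\tau)(t-\hat\gamma_1^{-1}\tau)$ and using that $\mu_{\hat\gamma_2}$ has total mass zero to discard the constant $\log_p(a-c\tau)$, the expression becomes $\int_{\PP^1(\Q_p)}\log_p\!\bigl(\frac{t-\hat\gamma_1^{-1}\tau}{t-\tau}\bigr)d\mu_{\hat\gamma_2}=\log_p(d_{\hat\gamma_1,\hat\gamma_2})$, by the defining formula \eqref{desc-d} of the $2$-cocycle $d$. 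This is precisely the splitting we want.

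\textbf{Main obstacle.} The delicate point is that the integrals $B$ and $C$ in the definition of $\hat\rho$ are \emph{restricted} to $\X_\infty$ and $p\X_{\rm aff}$, which are $\Gap$-stable but not $\hGa$-stable: an element of $\hGa\setminus\Gap$ permutes the $p+1$ edges emanating from $\hat v_\ast$ and therefore interchanges $\X_\infty$ with pieces of $p\X_{\rm aff}$. Consequently the change-of-variable step in $\delta(B)+\delta(C)$ shifts the domains of integration from $\X_\infty,\,p\X_{\rm aff}$ to $\hat\gamma_1^{-1}\X_\infty,\,\hat\gamma_1^{-1}(p\X_{\rm aff})$, and the four restricted integrals do not obviously aggregate into a single integral on the $\hGa$-stable $\om_p\X$. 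Verifying that these domain discrepancies still collapse after push-forward to $\PP^1(\Q_p)$ — using the global $\hGa$-stability of $\om_p\X=\X_\infty\sqcup p\X_{\rm aff}$ together with the identities $\hat\pi_\ast(m_i)=0$ — is where the heaviest bookkeeping of the argument lies.
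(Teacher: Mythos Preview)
Your approach is exactly the paper's: decompose the coboundary of $\hat\rho$ into the three integrals coming from the $\om_p\X$, $\X_\infty$ and $p\X_{\rm aff}$ pieces, note that the ratio integrand depends only on $x/y$, kill the $B$- and $C$-contributions using $\hat\pi_\ast(m_1)=\hat\pi_\ast(m_2)=0$, and finish the $A$-term exactly as in Proposition~\ref{prop-splitting-I} via Lemma~\ref{lemma-splitting-II}. The paper's proof is even terser than yours --- it simply asserts that the coboundary equals $A+B+C$ in the ratio form and that $B=C=0$.

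On your ``main obstacle'': you are right that $\X_\infty$ and $p\X_{\rm aff}$ are only $\Gap$-stable, not $\hGa$-stable, so that a naive change of variables moves the domains. The paper's proof does not address this point either; it writes down the formula for $B$ and $C$ as if the domains were stable. The cleanest way to dissolve the difficulty is the one you hint at: extend $m_1$ and $m_2$ by zero to measures on all of $\om_p\X$, so that both $\hat\gamma_2 m_i-m_i$ and $\hat\gamma_1(\hat\gamma_2 m_i-m_i)$ are measures on the $\hGa$-stable set $\om_p\X$ with $\hat\pi_\ast$ equal to zero (here one uses that $\hat\pi$ is $\hGa$-equivariant, so $\hat\pi_\ast(\hat\gamma m_i)=\hat\gamma\cdot\hat\pi_\ast(m_i)=0$). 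Once the push-forward of the integrating measure is zero on all of $\PP^1(\Q_p)$, any integral of a function of $x/y$ against it vanishes, irrespective of which $\hGa$-translate of $\X_\infty$ one restricts to. This is why the bookkeeping, though real, ultimately collapses.
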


\begin{proof} As in the proof of Proposition \ref{prop-splitting-I}, if $\hat\gamma_1=\smallmat abcd$ then $\hat{\g}_1\hat{\rho}_{\hat{\g}_2}+\hat{\rho}_{\hat{\g}_1}-\hat{\rho}_{\hat{\g}_1\hat{\g}_2}$ is the sum of the three integrals
\[ A:=-\int_{\om_p\X}\log_p\left(\frac{x-\tau y}{ax+by-\tau(cx+dy)}\right)d\hat{\tilde\mu}_{\hat\gamma_2}(x,y), \]
\[ B:=\int_{\X_\infty}\log_p\left(\frac{x-\tau y}{ax+by-\tau(cx+dy)}\right)d(\hat\g_2m_1-m_1)(x,y), \]
\[ C:=\int_{p\X_{\rm aff}}\log_p\left(\frac{x-\tau y}{ax+by-\tau(cx+dy)}\right)d(\hat\g_2m_2-m_2)(x,y). \]
Since $B$ and $C$ depend only on $x/y$, from the vanishing of $\pi_*(m_1)$ and $\pi_*(m_2)$ we deduce that $B=C=0$. As in the proof of Proposition \ref{prop-splitting-I}, the claim follows from Lemma \ref{lemma-splitting-II}. \end{proof}

\subsection{Proof of Theorem \ref{vanish}}

Set $U:=\PP^1(\Q_p)-\Z_p$ and write $\mu$ for a cocycle in $Z^1(\Gamma,\mathcal M)$ representing $\boldsymbol\mu$. Since every $\g\in\Gap$ leaves $U$ invariant, the cochain $\mu_U$ given by the rule
\[ \mu_U(\g):=\bigl(\mu_{|\Gamma_0^D(pM)}\bigr)_{\g}(U) \]
is independent of the choice of $\mu$ and belongs to $Z^1\bigl(\Gap,H_p\bigr)$. Below, by
\[ \cl L_p^D\cdot\mu_U \]
we obviously mean the cocycle $\g\mapsto\cl L_p^D\cdot\mu_U(\g)$, with $\cL_p^D$ acting on $H$ as usual.

\begin{lemma} \label{lemma-splitting-III}
The equality $(\rho-\hat\rho)_{|\Gap}=\cl L_p^D\cdot\mu_U$ holds in $Z^1\bigl(\Gap,H_p\bigr)$.
\end{lemma}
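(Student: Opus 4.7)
The plan is to unfold both $\rho$ and $\hat\rho$ at $\g\in\Gap=\Ga\cap\hat\Gamma_0^D(M)$, reduce $(\rho-\hat\rho)_\g$ to a single integral involving $U_p^2-1$ on $\X_\infty$, and then invoke the Hida-theoretic definition of $\cL_p^D$ from \S\ref{sec-expl-spl} to identify this integral with $\cL_p^D\cdot\mu_U(\g)$.

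First I would unwind the three-term definition of $\hat\rho_\g$. Splitting the leading integral along $\om_p\X=\X_\infty\sqcup p\X_{\rm aff}$ (property (IV) of \S\ref{sec-shapiro}) and substituting \eqref{eq-new}, the two summands containing $\g m_1-m_1$ and $\g m_2-m_2$ that emerge from $-\int_{\om_p\X}\log_p(x-\tau y)\,d\hat{\tilde\mu}_\g$ cancel exactly against the two explicit corrective integrals in the definition of $\hat\rho_\g$. A change of variables $(x,y)=(px',py')$ on $p\X_{\rm aff}$ together with $\log_p(p)=0$ gives
\[ \int_{p\X_{\rm aff}}\log_p(x-\tau y)\,d(p\tilde\mu_\g)(x,y)\;=\;\int_{\X_{\rm aff}}\log_p(x-\tau y)\,d\tilde\mu_\g(x,y). \]
Combining this with $\rho_\g=-\int_\X\log_p(x-\tau y)\,d\tilde\mu_\g$ and the splitting $\X=\X_\infty\sqcup\X_{\rm aff}$, the integrals over $\X_{\rm aff}$ cancel and one arrives at the clean identity
\[ (\rho-\hat\rho)_\g\;=\;\int_{\X_\infty}\log_p(x-\tau y)\,d\bigl((U_p^2-1)\tilde\mu_\g\bigr)(x,y). \]

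The main obstacle is the second part: showing that the right-hand side equals $\cL_p^D\cdot\mu_\g(U)=\cL_p^D\cdot\tilde\mu_\g(\X_\infty)$, the last equality being a consequence of $\pi^{-1}(U)=\X_\infty$ and $\pi_\ast\tilde\mu=\mu_{|\Ga}$. Via the Shapiro-type isomorphism of Proposition \ref{isos}(i), adapted to $\Gap$ acting on $\X_\infty$, the cocycle $\tilde\mu_{|\X_\infty}$ corresponds to a class in a projective limit of ordinary cohomology groups with coefficients in $H\otimes\Z_p$, carrying a natural action of the Iwasawa algebra $\tilde\Lambda=\Z_p[\![\Z_p^\times]\!]$ through the diamond operators. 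In view of the very definition $\cL_p^D=(1-U_p^2)'$ via the derivation map \eqref{eq-hida-2}, the operator $U_p^2-1$ acts on this $\Lambda$-adic module by an element of $I_{\tilde\Lambda}\T^{D,\ord}_\infty(M)$ whose image in $\T^{D,\ord}_0(M)$ is $-\cL_p^D$. On the other hand, the functional $\nu\mapsto\int_{\X_\infty}\log_p(x-\tau y)\,d\nu$ on $\cM_{\X_\infty}$ precisely realizes the $\log_p$-derivation $I_{\tilde\Lambda}/I_{\tilde\Lambda}^2\to\Z_p$ along the cyclotomic direction, because the diamond action on $\cM_{\X_\infty}$ can be described through scaling in the $y$-variable, so that differentiation at $d=1$ converts the total-mass functional $\nu\mapsto\nu(\X_\infty)$ into integration against $\log_p(x-\tau y)$. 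Matching the two descriptions, and being careful about signs, yields the desired equality.

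This last identification is the direct quaternionic counterpart of the argument underlying \cite[Proposition 5.18]{Das}: it is the only step requiring genuine Hida-theoretic input, while the reduction leading to the integral over $\X_\infty$ is purely formal manipulation within the framework already set up in Sections \ref{L} and \ref{raav}.
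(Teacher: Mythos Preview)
Your reduction to the single integral over $\X_\infty$ is exactly the paper's argument: split along $\X=\X_\infty\sqcup\X_{\rm aff}$ and $\om_p\X=\X_\infty\sqcup p\X_{\rm aff}$, use \eqref{eq-new} so that the $m_1,m_2$-terms cancel, apply $\log_p(p)=0$ on $p\X_{\rm aff}$ to kill the affine contribution, and arrive at $(\rho-\hat\rho)_\g=-\int_{\X_\infty}\log_p(x-\tau y)(1-U_p^2)\,d\tilde\mu_\g$.

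For the final identification the paper does not argue at all: it simply invokes \cite[Lemma 5.16]{Das} (not Proposition 5.18) with the dictionary $m\leftrightarrow\g$, $-\psi(m)\leftrightarrow\mu_\g(U)$. Your Hida-theoretic sketch is in the right spirit, but the specific claim that ``differentiation at $d=1$ converts the total-mass functional $\nu\mapsto\nu(\X_\infty)$ into integration against $\log_p(x-\tau y)$'' via scaling in the $y$-variable is not accurate as stated: the integrand depends on $\tau$, which does not enter the diamond action, and on $\X_\infty$ one has $y\in p\Z_p$ so the relevant coordinate is $x$ rather than $y$. The actual mechanism in Dasgupta's lemma passes through the explicit description of $U_p$ on $\cM_{\X_\infty}$ and the identification of the derivation \eqref{eq-hida-2} with a Mellin-type transform; your heuristic conflates two different steps. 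Since you ultimately defer to Dasgupta anyway this is not a genuine gap, but if you want to keep the explanatory paragraph you should either correct the mechanism or replace it with a direct appeal to \cite[Lemma 5.16]{Das} as the paper does.
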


\begin{proof} Let $\g\in\Gamma_0^D(pM)$. Using the decompositions $\X=\X_\infty\coprod\X_{\mathrm{aff}}$ and $\om_p\X=\X_\infty\coprod p\X_{\mathrm{aff}}$ we split the difference
\[ \begin{split}\rho_\g-\hat\rho_\g=&-\int_\X\log_p(x-\tau y)d\tilde\mu_{\gamma}+\int_{\om_p\X}\log_p(x-\tau y)d\hat{\tilde\mu}_\gamma\\
                                    &-\int_{\X_\infty}\log_p(x-\tau y)d(\g m_1-m_1)-\int_{p\X_{\rm aff}}\log_p(x-\tau y)d(\g m_2-m_2)
   \end{split} \]
into the sum of $A$ and $B$ with
\[ A:=-\int_{\X_\infty}\log_p(x-\tau y)d\tilde\mu_\gamma+\int_{\X_\infty}\log_p(x-\tau y)d\hat{\tilde\mu}_\gamma-\int_{\X_\infty}\log_p(x-\tau y)d(\g m_1-m_1) \]
and
\[ B:=-\int_{\X_{\mathrm{aff}}}\log_p(x-\tau y)d\tilde\mu_\gamma+\int_{p\X_{\mathrm{aff}}}\log_p(x-\tau y)d\hat{\tilde\mu}_\gamma-\int_{p\X_{\rm aff}}\log_p(x-\tau y)d(\g m_2-m_2). \]
By formulas \eqref{eq-new}, one has
\[ A=-\int_{\X_\infty}\log_p(x-\tau y)\bigl(1-U_p^2\bigr)d\tilde\mu_\g. \]
As for $B$, since $\log_p(p)=0$, using again \eqref{eq-new} we can write
\[ \begin{split}
   \int_{p\X_{\mathrm{aff}}}\log_p(x-\tau y)d\bigl(W_p^2\tilde\mu\bigr)_\gamma &=\int_{\X_{\mathrm{aff}}}\log_p(px-\tau py)d{\tilde\mu}_\gamma+\int_{p\X_{\rm aff}}\log_p(x-\tau y)d(\g m_2-m_2)\\
                                                                               &=\int_{\X_{\mathrm{aff}}}\log_p(x-\tau y)d\tilde\mu_\g+\int_{p\X_{\rm aff}}\log_p(x-\tau y)d(\g m_2-m_2),
   \end{split} \]
whence $B=0$. Therefore
\begin{equation} \label{rho-int-eq}
(\rho-\hat\rho)_\g=-\int_{\X_\infty}\log_p(x-\tau y)\bigl(1-U_p^2\bigr)d\tilde\mu_\g.
\end{equation}
By \cite[Lemma 5.16]{Das} (once one makes the obvious modifications in the notation; namely, replace $m$ by $\g$ and $-\psi(m)$ by $\mu_{\g}(U)$), the integral in \eqref{rho-int-eq} is equal to $\cl L_p^D\cdot\mu_\g(U)$, and the proof is complete. \end{proof}

Since $\G=\Ga\ast_{\Gap}\hat{\Gamma}_0^D(M)$ by \eqref{amalgam}, the Mayer--Vietoris long exact sequence for amalgamated products of groups (cf. \cite[Theorem 2.3]{Sw}) yields an exact sequence
\[ H^1\bigl(\Gap,H_p\bigr)\overset\Delta\longrightarrow H^2(\G,H_p)\rightarrow H^2\bigl(\Ga,H_p\bigr)\oplus H^2\bigl(\hat{\Gamma}_0^D(M),H_p\bigr)\rightarrow
H^2\bigl(\Gamma_0^D(pM),H_p\bigr) \]
which, by means of the identifications provided by Shapiro's lemma, can also be regarded as the long exact sequence in cohomology associated with the short exact sequence of $\G$-modules
\[ 0\longrightarrow H_p\longrightarrow\cF(\V,H_p)\overset{F}{\longrightarrow}\cF_0(\E,H_p)\longrightarrow0 \]
with $F(f)(e):=f\bigl(t(e)\bigr)-f\bigl(s(e)\bigr)$ for all $e\in\E$. Observe that this exact sequence is nothing other than the dual of \eqref{exseq}.

Let $\boldsymbol{\rho-\hat\rho}$ denote the class of the cocycle $(\rho-\hat\rho)_{|\Gap}$ in $H^1\bigl(\Gap,H_p\bigr)$. The last ingredient we need is the following

\begin{proposition} \label{prop-splitting-III}
$\Delta(\boldsymbol{\rho-\hat\rho})=\cl L_p^D\cdot\ord_p(\boldsymbol{d})$.
\end{proposition}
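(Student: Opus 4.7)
The plan is to unwind the Mayer--Vietoris connecting map $\Delta$ via the short exact sequence of $\G$-modules
\[ 0 \lra H_p \lra \cF(\V, H_p) \overset{F}{\lra} \cF_0(\E, H_p) \lra 0 \]
displayed just before the statement, and then to identify the resulting $2$-cocycle with $\ord_p(d)$ by means of Proposition \ref{eq-II}. As a first reduction, Lemma \ref{lemma-splitting-III} furnishes the equality $(\rho - \hat\rho)_{|\Gap} = \cl L^D_p \cdot \mu_U$ already at the level of cocycles in $Z^1(\Gap, H_p)$. Since $\cl L^D_p$ is an element of $\End(H_p)$ and the $\G$-action on $H_p$ is trivial, $\cl L^D_p$ commutes with coboundaries and $\Delta$ is automatically $\cl L^D_p$-linear; it therefore suffices to establish $\Delta(\boldsymbol{\mu_U}) = \ord_p(\boldsymbol{d})$ in $H^2(\G, H_p)$.

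To do so, I would first apply Shapiro's lemma to reinterpret $\boldsymbol{\mu_U}$. Since $\Gap$ is the stabiliser of $e_\ast$ in $\G$ and $\G$ preserves the orientation of $\cl T$ (being contained in $\GL_2^+(\Q_p)$), one has $\cF_0(\E, H_p) \simeq \mathrm{Coind}^{\G}_{\Gap}(H_p)$, and Shapiro identifies $\boldsymbol\mu \in H^1(\G, \cF_0)$ with the class of $\gamma \mapsto \mu_\gamma(e_\ast)$ in $H^1(\Gap, H_p)$. The antisymmetry $\mu_\gamma(\bar e_\ast) = -\mu_\gamma(e_\ast)$ together with the definition $\mu_U(\gamma) = \mu_\gamma\bigl(\PP^1(\Q_p) - \Z_p\bigr) = \mu_\gamma(\bar e_\ast)$ then shows that $\boldsymbol{\mu_U}$ corresponds under Shapiro to $-\boldsymbol\mu$, so $\Delta(\boldsymbol{\mu_U}) = -\partial(\boldsymbol\mu)$, where $\partial$ is the connecting homomorphism attached to the displayed short exact sequence.

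Next, I would compute $\partial(\boldsymbol\mu)$ by an explicit lift of $\mu$ along $F$. The tree structure of $\cl T$ allows one to define, for each $\gamma \in \G$, a function $g_\gamma \in \cF(\V, H_p)$ by setting $g_\gamma(v_\ast) := 0$ and propagating along paths: for any vertex $v \ne v_\ast$ with unique path-without-backtracking $v_\ast = v_0, v_1, \dots, v_n = v$ and natural edges $f_j := (v_{j-1}, v_j)$, put $g_\gamma(v) := \sum_{j=1}^{n} \mu_\gamma(f_j)$. The antisymmetry of $\mu_\gamma$ makes this compatible with the recipe $F(g_\gamma)(e) = g_\gamma(t(e)) - g_\gamma(s(e)) = \mu_\gamma(e)$. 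Since $\mu$ is a cocycle, the coboundary $(\delta g)_{\gamma_1, \gamma_2} := \gamma_1 g_{\gamma_2} + g_{\gamma_1} - g_{\gamma_1\gamma_2}$ falls into $\ker(F) = H_p$, and evaluating at $v_\ast$ (using the normalisation $g_{\gamma_1}(v_\ast) = g_{\gamma_1\gamma_2}(v_\ast) = 0$ and the triviality of the $\G$-action on $H_p$) reduces it to $g_{\gamma_2}(\gamma_1^{-1} v_\ast)$, a signed sum of values of $\mu_{\gamma_2}$ along the geodesic from $v_\ast$ to $\gamma_1^{-1}(v_\ast)$.

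The final step is to match this telescoping sum with $\ord_p(d_{\gamma_1, \gamma_2})$ via Proposition \ref{eq-II}. Comparing the natural edges $f_j$ of the path with the even-geodesic edges $e_i \in \E^+$ (one has $e_i = f_i$ for odd $i$ and $e_i = \bar f_i$ for even $i$) and invoking Proposition \ref{eq-II} together with $\mu = t_r \cdot \mu^{\mathcal Y}$ modulo coboundaries (cf.\ Definition \ref{def-mu}), applied with $\gamma_1$ replaced by $\gamma_1^{-1}$, one obtains
\[ g_{\gamma_2}(\gamma_1^{-1} v_\ast) = -\ord_p\bigg(\mint_{\PP^1(\Q_p)} \frac{t - \gamma_1^{-1}(\tau)}{t - \tau}\, d\mu_{\gamma_2}(t)\bigg) = -\ord_p(d_{\gamma_1, \gamma_2}). \]
Combined with the sign coming from Shapiro and the reduction of the first paragraph, this yields $\Delta(\boldsymbol{\rho - \hat\rho}) = \cl L^D_p \cdot \ord_p(\boldsymbol d)$. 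The chief obstacle will be the sign and orientation bookkeeping: the even-geodesic convention of Proposition \ref{eq-II} (edges in $\E^+$ traversed in alternating orientations), the antisymmetry of $\mu$, the Shapiro sign $\boldsymbol{\mu_U} \leftrightarrow -\boldsymbol\mu$, and the base-point normalisation of $g_\gamma$ must all be reconciled so that the independent sign contributions cancel exactly; the final matching is the point where this bookkeeping is most delicate.
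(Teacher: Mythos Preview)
Your proposal is correct and follows essentially the same route as the paper: reduce via Lemma~\ref{lemma-splitting-III} to $\Delta(\boldsymbol{\mu}_U)=\ord_p(\boldsymbol{d})$, then combine Proposition~\ref{eq-II}, the commutativity $\ord_p\circ t_r=t_r\circ\ord_p$ of \eqref{commu}, and an explicit unwinding of $\Delta$. The paper leaves that unwinding implicit (citing only ``the explicit description of the map $\Delta$'' and \cite[Equation (22)]{Gr}); your path-telescoping lift $g_\gamma$ and the Shapiro identification $\boldsymbol{\mu}_U\leftrightarrow -\boldsymbol\mu$ make it concrete, and your cautionary remark about sign and orientation bookkeeping is exactly the point where care is needed.
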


\begin{proof} Writing $\boldsymbol\mu_U$ for the class of the cocycle $\mu_U$ in $H^1\bigl(\Gamma_0^D(pM),H_p\bigr)$, by Lemma \ref{lemma-splitting-III} it is enough to prove that
\[ \Delta(\boldsymbol{\mu}_U)=\ord_p(\boldsymbol{d}) \]
in $H^2(\G,H_p)$. This equality, which is the counterpart of \cite[Equation (22)]{Gr}, follows by combining Proposition \ref{eq-II}, the commutativity relation \eqref{commu} and the explicit description of the map $\Delta$. \end{proof}

Now we can prove Theorem \ref{vanish}, which implies Theorem \ref{prop-I}.

\begin{proof}[Proof of Theorem \ref{vanish}.]  The combination of Propositions \ref{prop-splitting-I} and \ref{prop-splitting-II} ensures that $\log_p(\boldsymbol{d})$ lies in the image of $\Delta$; in fact, it follows from the definition of the maps involved in the above Mayer--Vietoris sequence that
\[ \log_p(\boldsymbol{d})=\Delta(\boldsymbol{\rho-\hat\rho}), \]
because $\rho$ and $\hat\rho$ split $\log_p(\boldsymbol{d})_{|\Ga}$ and $\log_p(\boldsymbol{d})_{|\hGa}$, respectively. Proposition \ref{prop-splitting-III} then asserts that
\[ \log_p(\boldsymbol{d})=\cl L_p^D\cdot\ord_p(\boldsymbol{d}), \]
hence $\boldsymbol{d}_{\mathcal L}$ is trivial in $H^2(\G, H_p)$. \end{proof}

\subsection{Proof of a conjecture of M. Greenberg} \label{greenberg-subsec}

As an application of Theorem \ref{prop-I}, we give a proof of the conjecture formulated by M. Greenberg in \cite[Conjecture 2]{Gr} in the special case where the totally real field appearing in \cite{Gr} is $\Q$.

To state this result, let $E_{/\Q}$ be an elliptic curve of conductor $N=pMD$ and let $K$ be a real quadratic field in which the primes dividing $M$ split and the primes dividing $pD$ are inert. In particular, the completion $K_p$ of $K$ at the unique prime above $p$ is the unramified quadratic extension of $\Q_p$, so this notation is consistent with the one used in the rest of the paper. Observe that $E$ acquires split multiplicative reduction over $K_p$, write $q_E\in p\Z_p$ for Tate's $p$-adic period of $E$ and let $\langle q_E\rangle$ be the lattice in $K^\times_p$ generated by $q_E$.

Now, as in \cite[\S 3.4]{Gr}, choose a sign $\epsilon\in\{\pm1\}$ and set $H_E:=H_1(E,\Z)^\epsilon$. With the notation used in the previous sections of this paper, there are natural Hecke-equivariant surjections
\[ H_1\bigl(\Gap,\Z\bigr)\longrightarrow H\overset{\pi_E}\longrightarrow H_E. \]
One can attach to $E$ the measure-valued cohomology class $\boldsymbol{\mu}_E:=\boldsymbol{\mu}_{H_E}\in H^1\bigl(\G,\cM_0(H_E)\bigr)$ introduced at the end of \S \ref{sec-def-meas}.

Fix an isomorphism $H_E\simeq\Z$. For every prime $\ell$ write $a_\ell(E)$ for the $\ell$-th Fourier coefficient in the $q$-expansion of the newform associated with $E$ by modularity. Thanks to Proposition \ref{HeckeEquiv} and Lemma \ref{new}, it is immediate to show that $\boldsymbol{\mu}_E$ spans the one-dimensional subspace of $H^1\bigl(\G,\cM_0(\Q)\bigr)$ on which the Hecke algebra $\mathcal H(p,M)$ acts via the map
\[ \lambda_E:\mathcal H(pM)\longrightarrow\Z \]
attached to $E$ such that
\[ \lambda_E(T_\ell):=a_\ell(E)\quad\text{if $\ell\nmid pDM$},\qquad\lambda_E(W_p):=a_p(E),\qquad\lambda_E(W_\infty):=\epsilon. \]
Hence we conclude that our measure-valued class is an explicit version of the one considered in \cite[\S 8, (17)]{Gr}. Recall from Sections \ref{raav} and \ref{periods} that there is a pairing
\[ \langle\,,\rangle_E:H_1(\Gamma,\cD)\times H^1\bigl(\Gamma,\cM_0(H_E)\bigr)\longrightarrow\C_p^\times\otimes H_E\simeq\C_p^\times \]
and a Hecke-equivariant integration map
\[ \int_E:H_1(\G,\cD)\longrightarrow\C_p^\times\]
which fits into the commutative triangle
\[ \xymatrix@C=30pt@R=30pt{H_1(\G,\cD)\ar[r]^-\int\ar[dr]_-{\int_E} & T(\C_p)\ar[d]^-{\mathrm{id}\otimes\pi_E}\\
             & \C_p^\times.} \]
Set
\[ \Phi_E:=\int_E\circ\;\partial:H_2(\G,\Z)\longrightarrow\C_p^\times \]
and let $L_E\subset\C_p^\times$ denote the image of $\Phi_E$. Arguing as in the proof of Theorem \ref{lattice}, or invoking \cite[Proposition 30]{Gr}, it follows that $L_E$ is a lattice in $K_p^\times$.

As in \cite[Definition 29]{Gr}, we say that two lattices $\Lambda_1$ and $\Lambda_2$ in $K_p^\times$ are \emph{homothetic} if $\Lambda_1\cap\Lambda_2$ has finite index in both $\Lambda_1$ and $\Lambda_2$.

The result we want to prove, which was originally proposed in \cite[Conjecture 2]{Gr}, is the following

\begin{theorem} \label{greenberg-prop}
The lattices $L_E$ and $\langle q_E\rangle$ are homothetic in $K_p^\times$.
\end{theorem}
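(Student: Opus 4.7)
The plan is to derive Theorem \ref{greenberg-prop} as a formal consequence of Theorem \ref{main-thm-epsilon} combined with Tate's theory of $p$-adic uniformization for $E$. The first step is to identify $L_E$ with the image of $L_\epsilon$ under the Hecke-equivariant map $T_\epsilon(\C_p) \to \C_p^\times$ induced by the surjection $\pi_E \colon H_\epsilon \twoheadrightarrow H_E \simeq \Z$, where $\epsilon \in \{\pm 1\}$ is the sign of the action of $W_\infty$ on $H_E$. This identification is immediate from the commutative triangle relating $\int$ and $\int_E$ recalled just before the statement of the theorem, together with the $W_\infty$-equivariance of $\Phi$ established in the proof of Proposition \ref{prop-lattice}.

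Next, by modularity of $E$ and the Jacquet--Langlands correspondence, the Hecke eigensystem $\lambda_E$ occurs in $J_0^D(pM)^{\text{$p$-new}}$ and yields a Hecke-equivariant surjection $\varpi \colon J_0^D(pM)^{\text{$p$-new}} \twoheadrightarrow E$ of abelian varieties defined over $\Q$. Composing the Hecke-equivariant isogeny of Theorem \ref{main-thm-epsilon} with $\varpi_{/K_p}$ and passing to the $E$-isotypic quotient produces a rigid analytic isogeny over $K_p$:
\[ \mathbb G_m / L_E \longrightarrow E^{\mathrm{an}}_{/K_p}. \]
Since $p$ exactly divides the conductor $N$ of $E$ and $K_p/\Q_p$ is unramified, $E$ has split multiplicative reduction over $K_p$, so Tate's theorem provides a rigid analytic isomorphism $E^{\mathrm{an}}_{/K_p} \simeq \mathbb G_m / \langle q_E \rangle$ defined over $K_p$.

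Composing these two maps yields a rigid analytic isogeny $\mathbb G_m / L_E \to \mathbb G_m / \langle q_E \rangle$ over $K_p$. Any such isogeny between one-dimensional rigid analytic tori lifts to an endomorphism of $\mathbb G_m$ that sends $L_E$ into $\langle q_E \rangle$ with finite cokernel, whence $L_E$ and $\langle q_E \rangle$ are commensurable and therefore homothetic in $K_p^\times$, in the sense of \cite[Definition 29]{Gr}. The main potential obstacle is the bookkeeping required to ensure that the Hecke actions and the sign $\epsilon$ match up correctly so that the projection to the $E$-isotypic component genuinely sends $L_\epsilon$ to $L_E$; once that is dispatched, the theorem follows with no new analytic input beyond Theorem \ref{main-thm-epsilon} and the classical Tate uniformization.
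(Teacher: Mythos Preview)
Your proposal is correct and follows essentially the same route as the paper: both arguments project the Hecke-equivariant isogeny of Theorem \ref{main-thm-epsilon} (equivalently, Theorem \ref{GreenbergConj2}) onto the $\lambda_E$-isotypic quotient, identify the resulting one-dimensional torus quotient with the Tate curve $K_p^\times/\langle q_E\rangle$, and read off the homothety of lattices. The paper phrases the identification via multiplicity one, whereas you unpack it into modularity, Jacquet--Langlands, and Tate uniformization, but this is a difference of presentation rather than of substance.
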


\begin{proof} Multiplicity one ensures that the Tate elliptic curve $K_p^\times/\langle q_E\rangle$ is, up to isogeny, the unique quotient of $J_0^D(pM)^{\text{$p$-new}}$ on which the action of the Hecke operators $T_\ell$ for $\ell\nmid pDM$ and of the Atkin--Lehner involutions $W_p$ and $W_\infty$ factors through $\lambda_E$. Similarly, $K_p^\times\otimes H_E\simeq K_p^\times$ is the unique quotient of $K_p^\times\otimes H$ on which the action of these operators factors through $\lambda_E$.

Hence it follows from Theorem \ref{GreenbergConj2} that $K_p^\times/\langle q_E\rangle$ and $K_p^\times/L_E$ are isogenous over $K_p$, which amounts to saying that the lattices $L_E$ and $\langle q_E\rangle$ are homothetic in $K^\times_p$. \end{proof}

\begin{remarkwr}
If $f\in S_2(N)^{\text{$p$-new}}$ is a normalized $p$-new eigenform with not necessarily integral Fourier coefficients then Theorem \ref{greenberg-prop}, with the obvious modifications in the statement and in the proof, holds true as well.
\end{remarkwr}

\end{document}